\patchcmd{\@settitle}{\uppercasenonmath\@title}{}{}{}
\newtheorem{theorem}[subsection]{Theorem}
\newtheorem{proposition}[subsection]{Proposition}
\newtheorem{lemma}[subsection]{Lemma}
\newtheorem{definition}[subsection]{Definition}
\newtheorem{claim}[subsection]{Claim}
\newtheorem{remark}[subsection]{Remark}
\def\loccitt{\emph{loc. cit.}}
\def\loccit{\emph{loc. cit. }}
\def\fS{{\mathfrak{S}}}
\def\fZ{{\mathfrak{Z}}}
\def\BA{{\mathbb{A}}}
\def\BC{{\mathbb{C}}}
\def\BN{{\mathbb{N}}}
\def\BF{{\mathbb{F}}}
\def\BP{{\mathbb{P}}}
\def\BR{{\mathbb{R}}}
\def\BQ{{\mathbb{Q}}}
\def\BZ{{\mathbb{Z}}}
\def\CA{{\mathcal{A}}}
\def\CE{{\mathcal{E}}}
\def\CF{{\mathcal{F}}}
\def\CK{{\mathcal{K}}}
\def\CL{{\mathcal{L}}}
\def\CM{{\mathcal{M}}}
\def\CO{{\mathcal{O}}}
\def\CS{{\mathcal{S}}}
\def\CU{{\mathcal{U}}}
\def\Hom{\textrm{Hom}}
\def\and{\textrm{ }\&\textrm{ }}
\def\esym{\emph{Sym}}
\def\sym{\textrm{Sym}}
\def\sym{\textrm{Sym}}
\def\A{{\CA}}
\def\la{{\lambda}}
\def\bsq{{\blacksquare}}
\def\bla{{\boldsymbol{\la}}}
\def\bmu{{\boldsymbol{\mu}}}
\def\GA{\Gamma_{\BA^2}}
\def\GS{\Gamma_S}
\def\LA{\Lambda_{\BA^2}}
\def\LS{\Lambda_S}
\def\LSS{\Lambda_S \times K(S)}
\def\LSSS{\Lambda_S \times K(S \times S)}
\def\Ar{{\CA^{(r)}}}
\def\b1{\boldsymbol{1}}
\begin{document}

\title[The universal sheaf as an operator]{\Large{\textbf{The universal sheaf as an operator}}}

\author[Andrei Negu\cb t]{Andrei Negu\cb t}
\address{MIT, Department of Mathematics, Cambridge, MA, USA}
\address{Simion Stoilow Institute of Mathematics, Bucharest, Romania}
\email{andrei.negut@@gmail.com}

\maketitle

\begin{abstract} We compute the universal sheaf of moduli spaces $\CM$ of sheaves on a surface $S$, as an operator $\Lambda = \{\text{symmetric polynomials}\} \rightarrow K(\CM)$, thus generalizing the viewpoint of \cite{CNO} to arbitrary rank and general smooth surfaces.

\end{abstract}

\section{Introduction} 

\noindent Fix a natural number $r$. The moduli space $\CM^f$ of rank $r$ framed sheaves on the plane is an algebro-geometric incarnation of the instanton moduli space that gives rise to supersymmetric ${\mathcal{N}}=2$ $U(r)$--gauge theory on $\BR^4$ in the $\Omega$--background. In \cite{Nek}, the partition function of this theory was expressed in terms of equivariant integrals over $\CM^f$. The present note is concerned with the deformation from cohomology to $K$--theory (over $\BQ$), which corresponds to supersymmetric ${\mathcal{N}}=1$ $U(r)$--gauge theory on $\BR^4 \times S^1$. In this setting, \cite{CNO} considered the universal sheaf:
$$
\xymatrix{\CU \ar@{.>}[d] \\
\CM^f \times \BA^2}
$$
and its exterior powers $\CU_1 \otimes ... \otimes \CU_k$ on $\CM^f \times \BA^{2k}$, where $\CU_i$ denotes the pull-back of $\CU$ from the $i$--th factor of $\BA^{2k} = (\BA^2)^k$. These exterior powers yield operators: 
\begin{equation}
\label{eqn:operator def}
K_T(\CM^f) \xrightarrow{V}  \LA = \bigoplus_{k=0}^\infty K_{\BC^* \times \BC^* \times \fS(k)} (\BA^{2k})
\end{equation}
(the action $T \curvearrowright \CM^f$ is explained in Subsection \ref{sub:torus}, the action $\BC^* \times \BC^* \curvearrowright \BA^2$ is the usual scaling, and the symmetric group $\fS(k)$ acts on $\BA^{2k} = (\BA^2)^k$ by permutations) given by:
\begin{equation}
\label{eqn:operator}
V = \bigoplus_{k=0}^\infty \rho_{k*} \Big(\CU_1 \otimes ... \otimes \CU_k \otimes \pi_{k}^* \Big)
\end{equation}
The maps in \eqref{eqn:operator} are the natural projection maps:
\begin{equation}
\label{eqn:diag 0}
\xymatrix{ & \CM^f \times \BA^{2k}   \ar[ld]_-{\pi_{k}} \ar[rd]^-{\rho_{k}} & \\
\CM^f & & \BA^{2k}}
\end{equation}
As shown in \loccitt, the operator $W(m) : K_T(\CM^f) \rightarrow K_T(\CM^f)$ that encodes the contribution of bifundamental matter to the gauge theory at hand factors as:
\begin{equation}
\label{eqn:ext}
W(m) = V^* \cdot m^{\deg} \cdot V
\end{equation}
(up to a renormalization that will not concern us in the present paper) where $\deg :$ $\LA \rightarrow \LA$ is the operator which scales the $k$--th direct summand in \eqref{eqn:operator def} by $k$. \\

\noindent In \cite{CNO}, from whom we borrowed both the main construction and the title of the present paper, the authors compute the $r=1$ case of $V$ as follows: the moduli space $\CM^f|_{r = 1}$ is isomorphic to the Hilbert scheme of points on $\BA^2$, and its $K$--theory is naturally identified with $\LA$ (see \cite{BKR, H}). Then \cite{CNO} computes $V$ as an explicit exponential in the usual bosonic realization of $\LA$, times the famous $\nabla$ operator (see \cite{BG}). The resulting formula for $V$ yields a geometric incarnation of a combinatorial identity from \cite{GHT}, and implies the formula for $\Phi_m$ computed in \cite{CO}. \\

\noindent In the present paper, we take a somewhat different route toward computing the operator $V$, for general $r$. We recall the actions of the \underline{elliptic Hall algebra} $\A$ on the domain and target of the map \eqref{eqn:operator def}, which were studied in \cite{FHHSY} and \cite{FT, SV 1}:
\begin{equation}
\label{eqn:action}
\LA \ \stackrel{\Psi}\curvearrowleft \ \A \ \stackrel{\Phi}\curvearrowright \  K_T(\CM^f)
\end{equation}
(we refer to \cite{K-theory} for an overview of our viewpoint on these actions). We will recall the definition of the elliptic Hall algebra $\A$ in Subsection \ref{sub:enter a}, and in Subsection \ref{sub:more a} we will introduce the subalgebra:
\begin{equation}
\label{eqn:half 0}
\Ar \subset \A
\end{equation}
Intuitively, $\Ar$ is half of $\A$ with respect to a certain triangular decomposition. Consider the following modification of the diagram \eqref{eqn:diag 0}:
$$
\xymatrix{\CM^f \times \{\text{origin}\} \ar@{^{(}->}[r]^-\iota \ar@{=}[d] & \CM^f \times \BA^{2k}   \ar[ld]_-{\pi_{k}} \ar[rd]^-{\rho_{k}} & \\
\CM^f & & \BA^{2k}}
$$	
from which it is easy to see that:
\begin{equation}
\label{eqn:gamma}
\GA = \bigoplus_{k=0}^\infty \iota^*\Big(\CU_1 \otimes ... \otimes \CU_k \otimes \rho_{k}^* \Big)^{\fS(k)} : \LA \longrightarrow K_T(\CM^f)
\end{equation}
is given by $\GA = V^* \circ [(1-q_1)(1-q_2)]^{\deg}$. Thus, we will compute $\GA$ instead of $V$. \\

\begin{theorem}
\label{thm:main}

For any $r$, the operator $\GA$ commutes with the $\Ar$--action:
\begin{equation}
\label{eqn:main}
\xymatrix{\LA \ar[d]_-{\Psi(a)} \ar[r]^-\GA & K_T(\CM^f) \ar[d]^-{\Phi(a)} \\
\LA \ar[r]^-\GA & K_T(\CM^f)}
\end{equation}
for all $a \in \Ar$. Moreover, after localization to $\emph{Frac}(\emph{Rep}_T)$, the operator $\GA$ is uniquely determined by the commutativity of diagram \eqref{eqn:main}. \\

\end{theorem}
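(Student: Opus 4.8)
The plan is to establish the two assertions of the theorem separately: the commutativity of \eqref{eqn:main}, and then the uniqueness of $\GA$ after localization. For the commutativity, I would first reduce to a generating set of the algebra $\Ar$ from Subsection \ref{sub:more a}. This reduction is legitimate because the intertwining property is closed under products: if $\Phi(a)\circ\GA=\GA\circ\Psi(a)$ and $\Phi(b)\circ\GA=\GA\circ\Psi(b)$, the same holds for $ab$, since $\Psi$ and $\Phi$ are algebra actions. For a single generator $a$, both $\Psi(a)$ and $\Phi(a)$ admit explicit geometric descriptions: on $K_T(\CM^f)$ the operator $\Phi(a)$ is a Hecke correspondence modifying a sheaf along a point of $\BA^2$, weighted by a tautological class extracted from the universal sheaf $\CU$, while on $\LA$ the operator $\Psi(a)$ inserts (or contracts) an $\BA^2$ factor with the appropriate $\fS(k)$-symmetrization. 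Since $\GA$ is itself the correspondence assembled from $\CU_1\otimes\cdots\otimes\CU_k$ and restricted to the origin, each composite $\Phi(a)\circ\GA$ and $\GA\circ\Psi(a)$ is represented by a $K$-theory kernel living over $\CM^f\times\BA^{2(k\pm 1)}$, and I would prove commutativity by identifying these two kernels directly.

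For the uniqueness, the structural input — supplied by the construction of $\Psi$ — is that, after localization to $\mathrm{Frac}(\mathrm{Rep}_T)$, the module $\LA$ is generated over $\Ar$ by the vacuum $\b1$ spanning the $k=0$ summand. Granting this, any operator $G$ fitting into \eqref{eqn:main} is determined by the single image $G(\b1)$. Moreover, every relation $\Psi(a)\b1=\Psi(a')\b1$ holding in $\LA$ forces $\Phi(a)G(\b1)=\Phi(a')G(\b1)$, so $G(\b1)$ lies in the common kernel of the operators $\Phi(j)$ for $j$ in the kernel $J$ of the orbit map $\Ar\to\LA$, $a\mapsto\Psi(a)\b1$. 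I would deduce from the nondegeneracy of the localized $\Phi$-action that this common kernel is one-dimensional, so that $G$ is determined up to an overall scalar; the $k=0$ term of \eqref{eqn:gamma} computes $\GA(\b1)=\CO_{\CM^f}$, which fixes the scalar and singles out $\GA$ as the unique such intertwiner.

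I expect the main obstacle to be the kernel comparison in the first step. Matching $\Phi(a)\circ\GA$ with $\GA\circ\Psi(a)$ requires understanding precisely how the class of the universal sheaf transforms under a Hecke modification — that is, identifying the tautological weight on the Hecke correspondence with the restriction of an additional $\CU$-factor — and then carrying out the ensuing derived tensor product and excess-intersection computation over $\CM^f\times\BA^{2(k\pm 1)}$. This is the step where the geometry of $\CU$ enters essentially, and I anticipate that the normalizing twist $[(1-q_1)(1-q_2)]^{\deg}$ built into $\GA$ is exactly what reconciles the conventions on the two sides; verifying this bookkeeping is the technical heart of the argument.
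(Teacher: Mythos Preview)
Your plan has the right architecture but misses the specific mechanisms the paper uses, and in one place it leaves a genuine gap.

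\textbf{Commutativity.} The paper does \emph{not} compare geometric kernels directly. Instead, it first invokes Proposition~\ref{prop:action universal} (imported from \cite{W surf}), which already expresses $\Phi(H_{\pm n,m})$ acting on a universal class $f[\CU_\circ]$ as a contour integral of an explicit rational function. Since $\GA(f[X])=f[\CU_\circ]$ by construction, one then simply compares those integrals with the integral formulas \eqref{eqn:action left}--\eqref{eqn:action right} for $\Psi(H_{\pm n,m})(f[X])$. The punchline is that the two integrands are \emph{identical}; the only difference is on which side of the contours the pole at $z_a=0$ lies. The condition $m>\mp nr$ defining $\Ar$ is precisely what makes the integrand regular at $0$, so the contour difference is immaterial. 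Your excess-intersection plan would amount to re-deriving Proposition~\ref{prop:action universal}, and more seriously, you never say where the restriction to $\Ar$ enters your kernel comparison. In the paper's picture it is transparent (a vanishing residue); in a raw geometric comparison it is not, and without it your argument would appear to prove commutativity for all of $\CA$, which is false because the two representations have different central characters.

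\textbf{Uniqueness.} Your reduction to the image of the vacuum is fine in spirit, but the sentence ``I would deduce from the nondegeneracy of the localized $\Phi$-action that this common kernel is one-dimensional'' is exactly the content that needs proof. The paper isolates this as Claim~\ref{claim:generate}: any $\gamma\in K_T(\CM^f_d)_{\text{loc}}$ is determined by the collection $\Phi(H_{1,m_1}\cdots H_{1,m_d})(\gamma)\in K_T(\CM^f_0)_{\text{loc}}$ for $m_i>-r$. The proof uses Thomason localization to the fixed-point basis $|\bla\rangle$, adjointness between $H_{1,m}$ and $H_{-1,m+r}$, and then the observation that on fixed points the creation operators satisfy $\Phi(H_{-1,m})|\bmu\rangle=\sum_{\bla=\bmu+\bsq}\chi_{\bsq}^m\,\tau_\bmu^\bla|\bla\rangle$ with pairwise distinct weights $\chi_\bsq$; the resulting Vandermonde system is invertible, so suitable linear combinations of $H_{-1,m}$ hit any single $|\bla\rangle$. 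That is the ``nondegeneracy'' you need, and it is not a formality. Note also that the paper only imposes the single normalization $\Gamma_0(1)=\b1_0$ and then bootstraps every $\Gamma_d$ from $\Gamma_0$ via Claim~\ref{claim:generate}; your scheme of fixing $G(\b1)=\CO_{\CM^f}$ imposes one scalar per component $d$ and still requires the same separating-operators argument to conclude.
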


\noindent The point of view in Theorem \ref{thm:main}, namely of determining an operator through its commutation with an algebra action rather than through explicit formulas, was used in \cite{W, AGT} to compute the operator \eqref{eqn:ext}. However, the strength of this approach is that it allows us to generalize from the local situation of the moduli spaces $\CM^f$ of framed sheaves on the affine plane to the global case of moduli spaces of stable sheaves $\CM^s$ on a general smooth projective surface $S$. We will review the necessary setup in Section \ref{sec:surfaces} (including Assumptions A and S, subject to which we make all the following claims), but the idea is to consider the operator: 
\begin{equation}
\label{eqn:operator def surface}
\LS = \bigoplus_{k=0}^\infty K_{\fS(k)} (S^k) \xrightarrow{\GS} K(\CM^s)
\end{equation}
explicitly given by:
\begin{equation}
\label{eqn:operator surface}
\GS = \bigoplus_{k=0}^\infty \pi_{k*} \Big(\CU_1 \otimes ... \otimes \CU_k \otimes \rho_{k}^* \Big)^{\fS(k)} 
\end{equation}
where the notions in the right-hand side of \eqref{eqn:operator surface} are defined just like their counterparts in \eqref{eqn:operator} (Assumption A ensures that there exists a universal sheaf $\CU$ on $\CM^s \times S$, and we fix such a sheaf throughout the present paper). As for the analogues of the action \eqref{eqn:action} for a general surface, the former was worked out in \cite{Hecke}:
\begin{equation}
\label{eqn:phi intro}
\A \xrightarrow{\Phi} \Hom(K(\CM^s) , K(\CM^s \times S))
\end{equation}
and provided a generalization of the classical Heisenberg algebra action on the cohomology groups of Hilbert schemes (\cite{G, Nak}). We will also provide an analogue:
\begin{equation}
\label{eqn:psi intro}
\A \xrightarrow{\Psi} \Hom(\LS , \LSS)
\end{equation}
of the second action from \eqref{eqn:action}, where by a slight abuse of notation, we write:
$$
\LSS = \bigoplus_{k=0}^\infty K_{\fS(k)} (S^k \times S)
$$
(the symmetric group $\fS(k)$ only acts on $S^k$). Then our main result for a smooth surface $S$, subject to the assumptions in Subsection \ref{sub:assumption}, is the following: \\

\begin{theorem}
\label{thm:surface}
	
For any $r$, the operator \eqref{eqn:operator surface} commutes with the $\Ar$--action:
\begin{equation}
\label{eqn:main surface}
\xymatrix{\LS \ar[d]_-{\Psi(a)} \ar[r]^-{\GS} & K(\CM^s) \ar[d]^-{\Phi(a)} \\
\LSS \ar[r]^-{\GS} & K(\CM^s \times S)}
\end{equation}
for all $a \in \Ar$. \\
	
\end{theorem}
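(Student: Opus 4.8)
The plan is to reduce the commutativity of \eqref{eqn:main surface} to the case of a single Hecke modification, and then to propagate it across all of $\Ar$. First I would use that $\Ar$ is generated, as an algebra, by the distinguished family of elements recalled in Subsection~\ref{sub:more a}, so that it suffices to verify \eqref{eqn:main surface} for $a$ ranging over these generators. The propagation from generators to an arbitrary $a \in \Ar$ rests on the observation that both $\Phi$ in \eqref{eqn:phi intro} and $\Psi$ in \eqref{eqn:psi intro} are built from the same algebra $\A$, so that the products on the right-hand sides of \eqref{eqn:phi intro} and \eqref{eqn:psi intro} --- given by concatenating the auxiliary $S$--legs --- are intertwined once the generators are, while the fixed operator $\GS$ simply rides along. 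Thus the content of the Theorem is concentrated in the generator case.

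For a generator $a$, I would recall from \cite{Hecke} that $\Phi(a)\colon K(\CM^s) \to K(\CM^s \times S)$ is realized by the elementary Hecke correspondence $\{(F \subset F') : F'/F \text{ has length } 1 \text{ and is supported at } x \in S\}$, the operator being pull--push along the two forgetful projections, twisted by a tautological $K$--class and recording the point $x$ in the output $S$--factor. On the other side, $\Psi(a)$ is the combinatorial operator on $\LS$ that inserts one marked point of $S$ into the extra factor of $\LSS$, dressed by the matching tautological multiplication. With these descriptions, both composites $\Phi(a)\circ \GS$ and $\GS \circ \Psi(a)$ may be written as a single pull--push along the fiber product of $\CM^s \times S^k$ with the Hecke correspondence, the integrand in each case being assembled from $\CU_1 \otimes \cdots \otimes \CU_k$ and the class defining $a$.

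The crux is then a sheaf-theoretic identity on this fiber product. Over the Hecke correspondence there is a canonical exact triangle relating the universal sheaf before and after the modification, of the schematic form $[\CU_{F'}] = [\CU_F] + [\CO_x]\cdot(\ldots)$ in $K$--theory; tensoring the $k$--fold product $\CU_1 \otimes \cdots \otimes \CU_k$ against this relation and pushing forward should reproduce precisely the insertion of one extra $S$--point performed by $\Psi(a)$. To pin the identity down I would work in a formal neighborhood of the modification point, where $S$ looks like $\BA^2$ and the computation collapses to the affine situation already controlled by Theorem~\ref{thm:main}; the normal bundle of the Hecke correspondence (equivalently, the obstruction theory of $\CM^s$ supplied by Assumption A) then contributes the geometric avatar of the factor $[(1-q_1)(1-q_2)]^{\deg}$ visible in the discussion around \eqref{eqn:gamma}.

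The main obstacle I anticipate is the bookkeeping of these excess/normal-bundle contributions in the global setting, where torus localization is unavailable and one must argue through honest intersection theory on $\CM^s$ and its Hecke correspondences. Assumptions A and S are exactly what render this tractable: they guarantee the existence and correct normalization of the universal sheaf $\CU$ entering \eqref{eqn:operator surface}, the smoothness needed for the correspondences to carry the expected refined classes, and the properness of $\pi_k$ that makes the pushforwards defining $\GS$ well posed. A secondary point requiring care is the compatibility of the generator-to-algebra propagation with the $S$--leg concatenation, which amounts to checking that $\Phi$ and $\Psi$ respect the coproduct of the bialgebra $\A$; this should follow from the constructions underlying \eqref{eqn:phi intro} and \eqref{eqn:psi intro}.
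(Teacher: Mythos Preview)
Your outline diverges substantially from the paper's argument, and in its present form has real gaps.

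The paper's proof is a one-line contour comparison built on two prepackaged results. Proposition~\ref{prop:action universal surface} (imported from \cite{W surf}) expresses $\Phi(H_{\pm n,m})$ applied to a universal class $f[\CU]$ as an explicit contour integral \eqref{eqn:action universal surface 1}--\eqref{eqn:action universal surface 2}; Theorem~\ref{thm:action functions surface} together with Definition~\ref{def:plethysm surface} yields the matching contour formulas \eqref{eqn:action left surf}--\eqref{eqn:action right surf} for $\Psi(H_{\pm n,m})(f[X])$. Since $\GS$ is precisely $f[X] \mapsto f[\CU]$, the two composites in \eqref{eqn:main surface} have \emph{identical integrands}; they differ only in whether the pole at $z_a = 0$ lies inside or outside the contours. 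For generators of $\Ar$, i.e. $H_{\pm n,m}$ with $m > \mp nr$, the integrand is regular at $0$ (the rank-$r$ exterior class $\wedge^\bullet(\mp \CU/z_a)$ contributes a pole of order $r$ which is beaten by the monomials $z_a^{d_a}$), so the contour discrepancy is harmless. No propagation step is needed: the contour formulas already cover every $H_{n,m}$ simultaneously.

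Your plan, by contrast, tries to redo the content of Proposition~\ref{prop:action universal surface} geometrically, and this is where the gaps lie. First, you describe $\Phi(a)$ for a generator as a \emph{single} length-$1$ Hecke modification; but the generators $H_{\pm n,m}$ of $\Ar$ for $n \geq 2$ are the $n$-fold iterated correspondences \eqref{eqn:action right surface 4}--\eqref{eqn:action right surface 5}, and it is not clear that $\Ar$ is generated by the $n=1$ operators alone. Second, your ``reduce to a formal neighborhood and invoke Theorem~\ref{thm:main}'' step is not a proof: the global class in $K(\CM^s \times S)$ is not determined by its formal germ at a point, and the paper never localizes in this way. Third, the sheaf-theoretic identity you allude to (expanding $\CU_1 \otimes \cdots \otimes \CU_k$ across $[\CU'] = [\CU] + [\CO_\Delta]\cdot(\text{line bundle})$) is exactly what underlies Proposition~\ref{prop:plethystic identity} and, ultimately, Proposition~\ref{prop:action universal surface}; but turning that expansion into the precise contour integral requires the full machinery of \cite{W surf}, not a local computation. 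Finally, the ``coproduct compatibility'' remark at the end is a red herring: the relevant multiplicativity is the weak-action axiom \eqref{eqn:hom}, which is built into Definitions~\ref{def:weak}--\ref{def:strong} and needs no further checking.

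In short: the correct line is to quote Proposition~\ref{prop:action universal surface}, compare with \eqref{eqn:action left surf}--\eqref{eqn:action right surf}, and observe that the residue at $0$ vanishes precisely on $\Ar$.
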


\noindent As an $\Ar$--module, $K_T(\CM^f)$ is generated by a single element, namely the fundamental class of the component parametrizing framed sheaves with $c_2 = 0$ (this plays an important role in the uniqueness statement of Theorem \ref{thm:main}). Meanwhile, we will show in Proposition \ref{prop:generators} that $K(\CM^s)$ has countably many generators, namely the fundamental classes $\b1_d$ of the components $\CM^s_d \subset \CM^s$ parametrizing stable sheaves with $c_2 = d$. In our language, we have $\Gamma_S(1) = \prod_{d \in \BZ} \b1_d$, where $1 \in K_{\fS(0)}(S^0) \cong \BQ$. \\

\noindent I would like to thank Andrei Okounkov for teaching me much of the beautiful mathematics contained in this note. I gratefully acknowledge NSF grants DMS--1760264 and DMS--1845034, as well as support from the Alfred P. Sloan Foundation. \\

\section{The case of the affine plane}
\label{sec:affine}

\subsection{} 
\label{sub:frobenius}

Even before dealing with $\BA^2$, let us discuss the situation of $\fS(k)$--equivariant coherent sheaves on a point $\circ$, which is just another word for finite-dimensional $\fS(k)$--modules. We have the well-known Frobenius character isomorphism:
\begin{equation}
\label{eqn:degree k}
K_{\fS(k)} (\circ) \cong \Big\{ \text{degree }k \text{ symmetric polynomials in } x_1,x_2,... \Big\}
\end{equation}
given as a sum over partitions $\lambda = (\lambda_1 \geq ... \geq \lambda_t)$ of size $k$ by the formula:
\begin{equation}
\label{eqn:frobenius}
M \mapsto \sum_{|\lambda| = k} \frac {p_\lambda}{z_\lambda} \cdot \text{Tr}_M(\omega_\lambda)
\end{equation}
where we let $\omega_\lambda \in \fS(k)$ be any permutation of cycle type $\lambda$, and define:
\begin{equation}
\label{eqn:basis}
p_{\lambda} = p_{\lambda_1} \dots p_{\lambda_t} 
\end{equation}
with $p_n = x_1^n+x_2^n+...$ being the \underline{power sum functions}. In \eqref{eqn:frobenius}, we let $z_\lambda = \lambda! \prod_i \lambda_i$, where $\lambda!$ is the product of factorials of the number of times each integer appears in $\lambda$. It is useful to take the direct sum of \eqref{eqn:degree k} over all $k \in \BN \sqcup 0$, and obtain:
\begin{equation}
\label{eqn:degree any}
\Lambda:= \bigoplus_{k=0}^\infty K_{\fS(k)} (\circ) \cong \Big\{\text{symmetric polynomials in } x_1,x_2,... \Big\}
\end{equation}
This is beneficial because $\Lambda$ is manifestly a commutative ring, in fact the polynomial ring generated by $p_1,p_2,...$. In terms of representations of $\fS(k)$, the operation of multiplication by power sum functions corresponds to parabolic induction:
\begin{equation}
\label{eqn:induction}
K_{\fS(l)} (\circ) \xrightarrow{p_k} K_{\fS(k+l)} (\circ), \qquad M \mapsto \text{Ind}_{\fS(k) \times \fS(l)}^{\fS(k+l)} (p_k \boxtimes M)
\end{equation}
However, the ring $\Lambda$ is also endowed with a symmetric pairing, determined by $\langle p_\lambda, p_\mu \rangle = \delta_\lambda^\mu z_\lambda$, or, in the language of finite-dimensional $\fS(k)$--modules:
\begin{equation}
\label{eqn:pairing}
\langle M, M' \rangle = \text{dim } \text{Hom}_{\fS(k)}(M,M')
\end{equation}
With this in mind, Frobenius reciprocity states that the adjoints of the operators \eqref{eqn:induction} are the parabolic restriction operators:
\begin{equation}
\label{eqn:restriction}
K_{\fS(k+l)} (\circ) \xrightarrow{p^\dagger_k} K_{\fS(l)} (\circ), \qquad M \mapsto \text{Hom}_{\fS(k)} \left(p_k, \text{Res}^{\fS(k+l)}_{\fS(k) \times \fS(l)} (M) \right)
\end{equation}
A reformulation of the main result of \cite{Ges} is the following: \\

\begin{theorem}
\label{thm:classic}

The operators $p_k, p_k^\dagger : \Lambda \rightarrow \Lambda$ satisfy the relations:
$$
[p_k^\dagger, p_l] = k \delta_k^l \cdot \emph{Id}
$$
for all $k,l \in \BN$, as well as the obvious relations $[p_k,p_l] = [p_k^\dagger, p_l^\dagger] = 0$. \\

\end{theorem}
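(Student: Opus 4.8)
The two families of \emph{obvious} relations require essentially no work, so the plan is to dispose of them first and then concentrate all effort on the mixed relation. Since $\Lambda\cong\BQ[p_1,p_2,\dots]$ is a commutative ring and each $p_l$ acts by multiplication by the power sum $p_l$, the equality $[p_k,p_l]=0$ is immediate; taking adjoints with respect to the pairing $\langle p_\lambda,p_\mu\rangle=\delta_\lambda^\mu z_\lambda$ then yields $[p_k^\dagger,p_l^\dagger]=0$ for free. Thus the entire content of the theorem is the single relation $[p_k^\dagger,p_l]=k\delta_k^l\cdot\text{Id}$, and I would reduce this to a one-line computation by first pinning down the abstract adjoint $p_k^\dagger$ explicitly.

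The key step is therefore to compute the action of $p_k^\dagger$ on the basis $\{p_\lambda\}$. Starting from the defining adjunction $\langle p_k^\dagger\, p_\mu, p_\nu\rangle=\langle p_\mu, p_k\, p_\nu\rangle=\langle p_\mu, p_{\nu\cup k}\rangle$ and the fact that multiplication by $p_k$ sends $p_\nu\mapsto p_{\nu\cup k}$ (adjoining a part equal to $k$), the pairing is nonzero only when $\mu=\nu\cup k$. A short bookkeeping of the ratio $z_\lambda/z_{\lambda\setminus k}=k\,m_k(\lambda)$ — where $m_k(\lambda)$ is the multiplicity of $k$ as a part of $\lambda$ — then gives $p_k^\dagger(p_\lambda)=k\,m_k(\lambda)\,p_{\lambda\setminus k}$, with the convention that this is $0$ when $k$ is not a part of $\lambda$. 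Equivalently, under $\Lambda\cong\BQ[p_1,p_2,\dots]$ the operator $p_k^\dagger$ is nothing but the differential operator $k\,\partial/\partial p_k$. The only genuine calculation here is the $z_\lambda$--bookkeeping; everything else is formal, and this is the step I expect to be the main (if modest) obstacle, since getting the factor $k\,m_k(\lambda)$ exactly right is where all the arithmetic lives.

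Once this identification is in hand the commutator is transparent. Because $p_l$ acts by multiplication and $p_k^\dagger=k\,\partial/\partial p_k$ is a derivation, the Leibniz rule gives $[p_k^\dagger, p_l]=k\,\tfrac{\partial p_l}{\partial p_k}\cdot\text{Id}=k\delta_k^l\cdot\text{Id}$. Alternatively, and staying entirely within the representation-theoretic language, I would evaluate both $p_k^\dagger p_l$ and $p_l p_k^\dagger$ on an arbitrary $p_\lambda$ using the formula $p_k^\dagger(p_\lambda)=k\,m_k(\lambda)\,p_{\lambda\setminus k}$ together with $p_l(p_\lambda)=p_{\lambda\cup l}$, and check that the difference vanishes when $k\neq l$, whereas for $k=l$ the relation $m_k(\lambda\cup k)=m_k(\lambda)+1$ leaves a single surviving term equal to $k\,p_\lambda$. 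That extra $+1$ is precisely the source of the $k\delta_k^l$, completing the proof.

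Finally, I note that since the theorem is phrased via the parabolic induction \eqref{eqn:induction} and restriction \eqref{eqn:restriction} operators, a direct proof is also available by combining Frobenius reciprocity with Mackey's double-coset formula for $\text{Res}^{\fS(k+l)}_{\fS(k)\times\fS(l)}\circ\text{Ind}$ and tracking the behavior of the $k$--cycle virtual character through the resulting sum over double cosets. I find the symmetric-function route cleaner: the Frobenius isomorphism \eqref{eqn:degree k} converts both the Mackey bookkeeping and the $k$--cycle class function into the elementary derivation computation above, so that the whole theorem collapses to the explicit evaluation of $p_k^\dagger$.
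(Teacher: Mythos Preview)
Your argument is correct. The identification $p_k^\dagger = k\,\partial/\partial p_k$ on $\BQ[p_1,p_2,\dots]$ is the standard computation, your $z_\lambda$--bookkeeping is right, and the Leibniz rule finishes the commutator cleanly.

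The paper, however, does not supply its own proof of this theorem: it simply attributes the result to Geissinger \cite{Ges} (``A reformulation of the main result of \cite{Ges} is the following''). So there is no proof in the paper to compare against. That said, the paper does prove the surface analogue, Proposition~\ref{prop:classic surface}, and there the argument proceeds entirely in the induction/restriction picture: one expands $p_k^\dagger p_l(M)$ via the shuffle description of $\text{Ind}$, uses the vanishing $p_l|_{\fS(i)\times\fS(l-i)}=0$ for $0<i<l$ to kill all cross-terms, and identifies the surviving piece with $p_l p_k^\dagger(M)$ plus a single diagonal contribution. This is exactly the Mackey/double-coset route you sketch in your final paragraph but set aside in favor of the symmetric-function computation. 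Both approaches are valid; yours is shorter and purely algebraic, while the paper's (for the analogue) stays geometric and generalizes directly to $\LA$ and $\LS$, where the bosonic realization is less immediate.
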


\subsection{} 
\label{sub:a2}

We will follow the presentation of \cite{CNO} in the present Subsection, and we will recycle the notation used in the previous Subsection. We will consider $\BA^2$ with the standard action of $\BC^* \times \BC^*$ that dilates the coordinate axes, and then the induced action of $\BC^* \times \BC^*$ on $\BA^{2k} = (\BA^2)^k$ commutes with the action of $\fS(k)$ that permutes the factors. Then we will consider the analogue of \eqref{eqn:degree any}:
$$
\LA = \bigoplus_{k=0}^\infty K_{\BC^* \times \BC^* \times \fS(k)} (\BA^{2k})
$$
If we let $q_1$ and $q_2$ denote the elementary characters of $\BC^* \times \BC^*$, then the inclusion of the origin $\circ \hookrightarrow \BA^2$ induces a map:
$$
\Lambda \otimes_{\BQ} \BQ[q_1^{\pm 1}, q_2^{\pm 1}] \rightarrow \LA
$$
which sends a finite-dimensional $\fS(k)$--module to the same module supported at the origin $\circ^k \hookrightarrow \BA^{2k}$. With this in mind, we may consider the following elements:
$$
\left[ p_k \otimes \CO_{\circ^k} \right] \in  K_{\BC^* \times \BC^* \times \fS(k)} (\BA^{2k})
$$ 
(the skyscraper sheaf at the origin tensored with the $\fS(k)$--character $p_k$) which induce the following analogues of the operators \eqref{eqn:induction} and \eqref{eqn:restriction}:
\begin{equation}
\label{eqn:ind res}
K_{\BC^* \times \BC^* \times \fS(l)} (\BA^{2l}) \xrightleftharpoons[p_k^\dagger]{p_k} K_{\BC^* \times \BC^* \times \fS(k+l)} (\BA^{2(k+l)})
\end{equation}
given by:
\begin{align} 
&p_k(M) = \text{Ind}_{\fS(k) \times \fS(l)}^{\fS(k+l)} \Big( \underbrace{\left[ p_k \otimes \CO_{\circ^k} \right]}_{\text{sheaf on }\BA^{2k}} \ \boxtimes \underbrace{M}_{\text{sheaf on }\BA^{2l}} \Big) \label{eqn:ind} \\
&p_k^\dagger(M) = \text{Hom}_{\fS(k)} \left(p_k, \text{Res}^{\fS(l)}_{\fS(k) \times \fS(l-k)} (M) \Big|_{\circ^k \times \BA^l }\right) \label{eqn:res}
\end{align}
It is easy to see that the operators \eqref{eqn:ind} and \eqref{eqn:res} are adjoint with respect to the pairing on $\LA$ given by formula \eqref{eqn:pairing}, with the caveat that the symbol ``dim Hom" must be understood to mean the $\BC^* \times \BC^*$ character of the space of $\fS(k)$--equivariant global homomorphisms over $\BA^{2k}$. With respect to this pairing, we have:
\begin{equation}
\label{eqn:pairing plane}
\langle p_\lambda, p_\mu \rangle_{\BA^2} = \delta_\lambda^\mu z_\lambda \prod_{i=1}^t \left[ (1-q_1^{\lambda_i})(1-q_2^{\lambda_i}) \right] 
\end{equation}
for any $\lambda = (\lambda_1 \geq ... \geq \lambda_t)$. The natural analogue of Theorem \ref{thm:classic} reads: \\

\begin{proposition}
\label{prop:classic plane}
	
The operators $p_k, p_k^\dagger : \LA \rightarrow \LA$ satisfy the relations:
\begin{equation}
\label{eqn:classic heis}
[p_k^\dagger, p_l] = k \delta_k^l (1-q_1^{k})(1-q_2^{k})\cdot \emph{Id}
\end{equation}
for all $k,l \in \BN$, as well as the obvious relations $[p_k,p_l] = [p_k^\dagger, p_l^\dagger] = 0$. \\
	
\end{proposition}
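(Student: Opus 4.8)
The plan is to reduce \eqref{eqn:classic heis} to a one-line computation in the bosonic Fock space, after identifying the geometric operators with multiplication and differentiation. The first step is to observe that the induction product makes $\LA$ into the polynomial $\BQ[q_1^{\pm 1},q_2^{\pm 1}]$-algebra freely generated by the classes $p_k = [p_k \otimes \CO_{\circ^k}]$, so that $\{p_\lambda\}_\lambda$ is a basis. Indeed, by homotopy invariance the $\BC^* \times \BC^*$-contraction of each $\BA^{2k}$ onto the origin gives $K_{\BC^* \times \BC^* \times \fS(k)}(\BA^{2k}) \cong R(\fS(k)) \otimes_\BQ \BQ[q_1^{\pm 1},q_2^{\pm 1}]$ compatibly with induction, and the classical statement that the induction product on $\bigoplus_k R(\fS(k))$ is multiplication of symmetric functions (cf. \eqref{eqn:induction}) carries over. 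In this language, the definition \eqref{eqn:ind} says precisely that the operator $p_l$ acts as multiplication by the generator $p_l$.

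The two families of ``obvious'' relations come for free. Since the induction product is commutative --- the transposition of blocks in $\fS(k+l)$ identifies $[p_k\otimes\CO_{\circ^k}]\boxtimes[p_l\otimes\CO_{\circ^l}]$ with its swap, both sheaves being supported at the origin --- we get $[p_k,p_l]=0$. By Frobenius reciprocity, already recorded in the text, $p_k^\dagger$ is the adjoint of $p_k$ with respect to the pairing \eqref{eqn:pairing plane}; taking adjoints of $[p_k,p_l]=0$ therefore yields $[p_k^\dagger,p_l^\dagger]=0$. It remains to treat the mixed commutator.

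For this I would compute the adjoint of multiplication by $p_k$ explicitly from \eqref{eqn:pairing plane}. Abbreviating $c_k=(1-q_1^k)(1-q_2^k)$, the basis $\{p_\lambda\}$ is orthogonal with $\langle p_\lambda,p_\lambda\rangle_{\BA^2}=z_\lambda\prod_i c_{\lambda_i}$, and in particular $\langle p_k,p_k\rangle_{\BA^2}=k\,c_k$. Comparing $\langle p_k\,p_\lambda,\,p_\mu\rangle_{\BA^2}$ with $\langle p_\lambda,\,k\,c_k\,\partial_{p_k}p_\mu\rangle_{\BA^2}$ on this basis, and using the elementary identity $z_{\lambda\cup k}/z_\lambda=k\,(m_k(\lambda)+1)$ for the multiplicity $m_k(\lambda)$ of $k$ in $\lambda$, shows that the unique adjoint of $p_k$ is the rescaled derivative $p_k^\dagger=k\,c_k\,\partial/\partial p_k$. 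Since $c_k$ is a scalar in $\BQ[q_1^{\pm 1},q_2^{\pm 1}]$ and the canonical Heisenberg relation $[\partial/\partial p_k,\,p_l]=\delta_k^l$ holds on the polynomial ring (with $p_l$ the multiplication operator), we obtain $[p_k^\dagger,p_l]=k\,c_k\,\delta_k^l\cdot\mathrm{Id}$, which is exactly \eqref{eqn:classic heis}.

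Granting the pairing \eqref{eqn:pairing plane}, the argument above is routine linear algebra; the genuine content is hidden in that pairing, namely in the value $\langle p_k,p_k\rangle_{\BA^2}=k(1-q_1^k)(1-q_2^k)$, and this is where I expect the real difficulty to lie. Concretely, the deformation factor arises from the equivariant Koszul resolution of the skyscraper $\CO_{\circ^k}$ --- each coordinate plane contributing a factor $(1-q_1)(1-q_2)$ --- together with the $k$-cycle in the character $p_k$, which cyclically identifies the $k$ planes and thereby replaces $q_1,q_2$ by $q_1^k,q_2^k$. A reader wanting a proof that does not presuppose \eqref{eqn:pairing plane} would instead run Mackey's double-coset formula on $\textrm{Res}\,\textrm{Ind}$ as in the classical proof of Theorem \ref{thm:classic}: the off-diagonal double cosets reassemble into $p_l\,p_k^\dagger$ and cancel in the commutator, leaving only the coincident-block term, whose $\fS(k)$-equivariant global $\textrm{Hom}$ over $\BA^{2k}$ is precisely the scalar $\langle p_k,p_k\rangle_{\BA^2}$. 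Either way, this self-$\textrm{Hom}$ computation over the non-compact $\BA^{2k}$ is the crux.
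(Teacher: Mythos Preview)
Your argument is correct. Granting the pairing \eqref{eqn:pairing plane}, the identification $p_k^\dagger = k(1-q_1^k)(1-q_2^k)\,\partial/\partial p_k$ is exactly right, and the commutator then drops out of the Weyl relation $[\partial/\partial p_k, p_l]=\delta_k^l$. You also correctly flag that the actual content is hidden in \eqref{eqn:pairing plane}, and your sketch of the Koszul/Mackey derivation of that pairing is accurate.

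The paper's own proof is deferred: it cites \cite{CNO} and says the argument is ``analogous to that of Proposition \ref{prop:classic surface}.'' That proposition is proved by the Mackey route you mention at the end: one expands $p_k^\dagger p_l(M)$ as a sum over $(l,n)$--shuffles, uses the vanishing $p_l|_{\fS(i)\times\fS(l-i)}=0$ for $0<i<l$ to kill all but the extreme shuffle types, matches the type--$0$ terms with $p_l p_k^\dagger(M)$, and is left with a single diagonal term whose value comes from restricting $\CO_{\Delta}$ to itself (the exterior algebra of the conormal bundle) and then taking the trace of a $k$--cycle on it. So the paper never invokes the pairing at all; the scalar $(1-q_1^k)(1-q_2^k)$ emerges directly from that trace computation rather than from $\langle p_k,p_k\rangle_{\BA^2}$.

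What each buys: your route is shorter and more transparent in the $\BA^2$ setting, since \eqref{eqn:pairing plane} is already on the table and everything reduces to polynomial calculus. The paper's shuffle/Mackey route is self-contained (it does not presuppose the pairing) and, more importantly, is the form that transports verbatim to an arbitrary smooth surface $S$, where there is no clean scalar pairing to lean on and the commutator lands in $K(S)$ via $\Delta_*$ rather than in the ground ring. Your final paragraph already anticipates this distinction correctly.
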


\begin{proof} See \cite{CNO}, although the proof is analogous to that of Proposition \ref{prop:classic surface}.
	
\end{proof} 

\subsection{} 
\label{sub:plethysm}

Two very important classes of symmetric polynomials are the \underline{elementary} and \underline{complete} ones, whose generating series are given by:
\begin{align*}
&\sum_{k=0}^\infty \frac {h_k}{z^k}  = \exp\left[\sum_{k=1}^\infty \frac {p_k}{k z^k} \right] = \prod_{i=1}^\infty \left(1-\frac {x_i}{z} \right)^{-1} \\
&\sum_{k=0}^\infty \frac {e_k}{(-z)^k} = \exp\left[- \sum_{k=1}^\infty \frac {p_k}{k z^k} \right]  = \prod_{i=1}^\infty \left(1-\frac {x_i}{z} \right)
\end{align*}
As elements of $\Lambda$ and $\LA$ (i.e. as $\fS(k)$--modules or $\fS(k)$--modules supported at the origin of $\BA^{2k}$, respectively), the symmetric polynomials $h_k$ and $e_k$ correspond to the trivial and sign one-dimensional representation, respectively.  Let:
$$
h_k^\dagger, e_k^\dagger : \LA \rightarrow \LA 
$$
denote the adjoints, with respect to the pairing \eqref{eqn:pairing plane}, of the operators of multiplication by $h_k$ and $e_k$ (respectively). Clearly, we have:
\begin{align*} 
&\sum_{k=0}^\infty h_k^\dagger z^k  = \exp\left[\sum_{k=1}^\infty \frac {p^\dagger_k z^k}k \right] \\
&\sum_{k=0}^\infty e_k^\dagger (-z)^k  = \exp\left[-\sum_{k=1}^\infty \frac {p^\dagger_k z^k}k \right] 
\end{align*} 
The following computations are simple consequences of \eqref{eqn:classic heis}:
\begin{align}
&\exp\left[-\sum_{k=1}^\infty \frac {p^\dagger_k z^k}k \right] \exp\left[\sum_{k=1}^\infty \frac {p_kw^{-k}}{k} \right] = \exp\left[\sum_{k=1}^\infty \frac {p_kw^{-k}}{k} \right] \exp\left[-\sum_{k=1}^\infty \frac {p^\dagger_k z^k}k \right]  \zeta \left( \frac zw \right)^{-1} \label{eqn:computation 1}  \\
&\exp\left[\sum_{k=1}^\infty \frac {p^\dagger_k z^k}k \right] \exp\left[- \sum_{k=1}^\infty \frac {p_k w^{-k}}{kq^k} \right] = \exp\left[- \sum_{k=1}^\infty \frac {p_k w^{-k}}{k q^k} \right]  \exp\left[\sum_{k=1}^\infty \frac {p^\dagger_k z^k}k \right]  \zeta \left( \frac wz \right)^{-1} \label{eqn:computation 2} 
\end{align}
where we let $q = q_1q_2$ and write:
\begin{equation}
\label{eqn:zeta} 
\zeta(x) = \frac {(1-xq_1)(1-xq_2)}{(1-x)(1-xq)} = \exp \left[ \sum_{k=1}^\infty \frac {x^k}k \cdot (1-q_1^k)(1-q_2^k) \right]
\end{equation}
Note that $\zeta(x) = \zeta\left(\frac 1{xq} \right)$. \\

\subsection{} 
\label{sub:enter a}

Consider the following half planes in $\BZ^2$:
$$
\BZ_+^2 = \{(n,m) \in \BZ^2 \text{ s.t. } n>0 \text{ or } n=0,m>0\}
$$
$$
\BZ_-^2 = \{(n,m) \in \BZ^2 \text{ s.t. } n<0 \text{ or } n=0,m<0\}
$$
The following is a model for the Hall algebra of the category of coherent sheaves over an elliptic curve, as defined in \cite{BS} (although we follow the normalization of \cite{W}). \\ 

\begin{definition}
\label{def:eha}

Consider the algebra:
$$
\CA_{\emph{loc}} = \BQ(q_1,q_2) \Big \langle P_{n,m}, c_1^{\pm 1}, c_2^{\pm 1} \Big \rangle_{(n,m) \in \BZ^2 \backslash (0,0)} \Big /^{c_1, c_2 \text{ central, and}}_{\text{relations \eqref{eqn:relation 1}, \eqref{eqn:relation 2}}}
$$
where we impose the following relations. The first of these is:
\begin{equation}
\label{eqn:relation 1}
[P_{n,m}, P_{n',m'}] = \delta_{n+n'}^0 \frac {d(1-q_1^d)(1-q_2^d)}{q^{-d} - 1} \left(1 - c_1^{-n} c_2^{-m} \right)
\end{equation}
if $nm'=n'm$ and $(n,m) \in \BZ_+^2$, with $d = \gcd(m,n)$. The second relation states that whenever $nm'>n'm$ and the triangle with vertices $(0,0), (n,m), (n+n',m+m')$ contains no lattice points inside nor on one of the edges, then we have the relation:
\begin{equation}
\label{eqn:relation 2}
[P_{n,m}, P_{n',m'}] = \frac {(1-q_1^d)(1-q_2^d)}{q^{-1} - 1} Q_{n+n',m+m'}
\end{equation}
$$
\cdot \ \begin{cases}
c_1^n c_2^m & \text{if } (n,m) \in \BZ_-^2, (n',m') \in \BZ_+^2, (n+n',m+m') \in \BZ_+^2 \\
c_1^{-n'} c_2^{-m'} & \text{if } (n,m) \in \BZ_-^2, (n',m') \in \BZ_+^2, (n+n',m+m') \in \BZ_-^2 \\
1 & \text{otherwise}
\end{cases}
$$
where $d = \gcd(n,m)\gcd(n',m')$ (by the assumption on the triangle, we note that at most one of the pairs $(n,m), (n',m'), (n+n',m+m')$ can fail to be coprime), and:
\begin{equation}
\label{eqn:qmn}
\sum_{k=0}^{\infty} Q_{ka,kb} \cdot x^k = \exp \left[ \sum_{k=1}^\infty \frac {P_{ka,kb}}k \cdot x^k \left(1 - q^{-k} \right) \right] 
\end{equation}
for all coprime integers $a,b$. Note that $Q_{0,0} = 1$. \\
	
\end{definition}

\subsection{} 
\label{sub:more a}

Let us consider $H_{n,m} \in \CA_{\text{loc}}$ defined for all coprime integers $a,b$ by:
\begin{equation}
\label{eqn:emn}
\sum_{k=0}^{\infty} H_{ka,kb} \cdot x^k = \exp \left[\sum_{k=1}^\infty \frac {P_{ka,kb}}k \cdot x^k \right] 
\end{equation}
In other words, for every fixed pair of coprime integers $a,b$, the elements $H_{ka,kb}$ will be to complete symmetric functions as the elements $P_{ka,kb}$ are to power-sum functions. In the present paper, we will work with the subalgebra:
\begin{equation}
\label{eqn:def a}
\CA_{\text{loc}} \supset \CA = \BZ[q_1^{\pm 1}, q_2^{\pm 1}] \Big \langle H_{n,m}, c_1^{\pm 1}, c_2^{\pm 1} \Big \rangle_{(n,m) \in \BZ^2 \backslash (0,0)} 
\end{equation}
We note a slight abuse in \eqref{eqn:def a}: the notation implies that the structure constants of products of $H_{n,m}$'s lie in $\BZ[q_1^{\pm 1}, q_2^{\pm 1}]$, but this is not quite true. The reason is the presence of denominators in \eqref{eqn:relation 1} and \eqref{eqn:relation 2}. However, in \eqref{eqn:relation 2}, the denominator is canceled by $Q_{n,m}$, which is by definition a multiple of $1-q^{-1}$. In \eqref{eqn:relation 1}, the denominator will be canceled by the numerator in all representations in which:
$$
(c_1,c_2) \ \mapsto \ (q^r,1) \ \text{or} \ (1,q^{-1}) 
$$
which will be the case throughout the present paper. However, for all $r \in \BN$, the following subalgebra of $\CA$ is unambiguously well-defined over $\BZ[q_1^{\pm 1}, q_2^{\pm 1}]$:
$$
\Ar = \BZ[q_1^{\pm 1}, q_2^{\pm 1}] \Big \langle H_{n,m} \Big \rangle_{m > - nr}
$$
The subalgebra $\Ar$ is half of $\A$ with respect to a triangular decomposition. \\

\subsection{} 
\label{sub:action functions} 

The following is obtained by combining the action of \cite{FHHSY} with the explicit formulas obtained in \cite{Shuf} (see Theorem 2.15 of \cite{Ops} for the explicit formulas, although the normalization of \loccit is somewhat different from that of the present paper). \\

\begin{theorem}
\label{thm:action functions}

There is an action $\CA \stackrel{\Psi}\curvearrowright \LA$ given by:
\begin{equation}
\label{eqn:action up 1}
c_1 \mapsto 1, \qquad c_2 \mapsto q^{-1},
\end{equation}
\begin{equation}
\label{eqn:action up 2}
P_{0,m} \ \mapsto p_m , \qquad P_{0,-m} \mapsto -q^m \cdot p_m^\dagger 
\end{equation}
while for all $n > 0$ and $m \in \BZ$, we have:
\begin{equation}
\label{eqn:action up 4}  
H_{n,m} \mapsto \int_{|z_1| \gg ... \gg |z_n|} \frac {\prod_{i=1}^n z_i^{\left \lfloor \frac {mi}n \right \rfloor - \left \lfloor \frac {m(i-1)}n \right \rfloor}}{\prod_{i=1}^{n-1} \left(1 - \frac {z_{i+1}q}{z_i} \right) \prod_{i<j} \zeta \left( \frac {z_j}{z_i} \right)}
\end{equation} 
$$
\exp \left[\sum_{k=1}^\infty \frac {z_1^{-k}+...+z_n^{-k}}k \cdot p_k \right] \exp \left[-\sum_{k=1}^\infty \frac {z_1^k+...+z_n^k}k \cdot p_k^\dagger \right] \prod_{a=1}^n  \frac {dz_a}{2\pi i z_a} 
$$
and:
\begin{equation}
\label{eqn:action up 5}  
H_{-n,m}  \mapsto  \int_{|z_1| \ll ... \ll |z_n|} \frac {(-q)^n \prod_{i=1}^n z_i^{\left \lfloor \frac {mi}n \right \rfloor - \left \lfloor \frac {m(i-1)}n \right \rfloor}}{\prod_{i=1}^{n-1} \left(1 - \frac {z_{i+1}q}{z_i} \right) \prod_{i<j} \zeta \left( \frac {z_j}{z_i} \right)} 
\end{equation} 
$$
\exp \left[-\sum_{k=1}^\infty \frac {z_1^{-k}+...+z_n^{-k}}{k \cdot q^k} \cdot p_k \right] \exp \left[\sum_{k=1}^\infty \frac {z_1^k+...+z_n^k}k \cdot p_k^\dagger \right] \prod_{a=1}^n  \frac {dz_a}{2\pi i z_a}
$$
The integrals go over concentric circles, contained inside each other in the order depicted in the subscript of each integral, and far away from each other relative to the size of the parameters $q_1$ and $q_2$ (which are assumed to be complex numbers). \\

\end{theorem}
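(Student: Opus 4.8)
The plan is to verify that the assignments \eqref{eqn:action up 1}--\eqref{eqn:action up 5} respect the defining relations of $\CA$, i.e.\ that they extend to an algebra homomorphism $\Psi : \CA \to \text{End}(\LA)$. Since $c_1,c_2$ are sent to scalars, their centrality is automatic; and since $\CA$ is generated by the $P_{0,\pm m}$ together with the $H_{\pm n,m}$ for $n>0$ (which encode the remaining $P_{\pm n,m}$ along each ray via \eqref{eqn:emn}), it suffices to check relations \eqref{eqn:relation 1} and \eqref{eqn:relation 2} on this generating set. I would first dispose of the purely vertical relations, those among the $P_{0,\pm m}$. Here \eqref{eqn:relation 1} reduces, for $n=n'=0$, to $[p_m,-q^m p_m^\dagger]$, and the one-line computation $[p_m,-q^m p_m^\dagger]=q^m[p_m^\dagger,p_m]=q^m\cdot m(1-q_1^m)(1-q_2^m)$ using Proposition \ref{prop:classic plane}, together with the substitution $c_2\mapsto q^{-1}$, reproduces the prefactor $\tfrac{m(1-q_1^m)(1-q_2^m)}{q^{-m}-1}(1-c_2^{-m})$ exactly. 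This fixes the normalization and serves as a sanity check.

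Next I would set up the vertex-operator formalism underlying \eqref{eqn:action up 4}--\eqref{eqn:action up 5}. Each such operator is an $n$-fold contour integral of a rational kernel against a product $E_+ E_-$, where $E_+$ is an exponential in the creation operators $p_k$ and $E_-$ an exponential in the annihilation operators $p_k^\dagger$; the prescribed nesting $|z_1|\gg\dots\gg|z_n|$ makes the geometric expansions of the factors $(1-z_{i+1}q/z_i)^{-1}$ and $\zeta(z_j/z_i)^{-1}$ converge, so that each $\Psi(H_{\pm n,m})$ is a well-defined operator on $\LA$. The engine of all further computation is the normal-ordering rule \eqref{eqn:computation 1}--\eqref{eqn:computation 2}: moving an annihilation exponential past a creation exponential produces a scalar factor $\zeta(\cdot)^{\pm 1}$. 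To compose $\Psi(H_{n,m})$ with $\Psi(H_{n',m'})$ I would write the product as a nested integral in two groups of variables $z_\bullet$ and $w_\bullet$, normal-order (moving the annihilation part of the left operator past the creation part of the right one), and absorb the resulting cross factors $\prod_{i,j}\zeta(z_i/w_j)^{\pm 1}$ into a single symmetric kernel.

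The heart of the matter is the commutator, which I would compute as the difference of the two contour orderings for the two products. Deforming the $z$-contours across the $w$-contours, the integrand picks up residues precisely at the poles $z_i=q\,w_j$ coming from the factors $(1-z_{i+1}q/z_i)^{-1}$ and from the kernel $\zeta$'s; these residues identify a $z$-variable with a $w$-variable and thereby lower the total rank. The triangle hypothesis in \eqref{eqn:relation 2} (no lattice points inside or on the edges) is exactly what guarantees that only simple poles contribute and that the collapsed integral reassembles into a single operator of slope $\tfrac{m+m'}{n+n'}$, matching $Q_{n+n',m+m'}$ through its exponential presentation \eqref{eqn:qmn}. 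The central monomials $c_1^n c_2^m$ and the three cases of \eqref{eqn:relation 2} are tracked by the Heisenberg normal-ordering constants produced when a creation variable is contracted against an annihilation variable, under $c_1\mapsto 1$, $c_2\mapsto q^{-1}$; in the collinear case $nm'=n'm$ the same residue collapse instead yields the central right-hand side of \eqref{eqn:relation 1}.

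\textbf{Main obstacle.} I expect the principal difficulty to be the residue bookkeeping: reproducing the exact prefactors $(1-q_1^d)(1-q_2^d)/(q^{-1}-1)$, distributing the cross-$\zeta$ factors so that the surviving kernel is precisely the defining kernel of $Q_{n+n',m+m'}$, and verifying the case distinctions without sign or normalization errors. For this reason the efficient route is to borrow the explicit shuffle-algebra formulas of \cite{Shuf} and Theorem 2.15 of \cite{Ops}: these already realize the operators \eqref{eqn:action up 4}--\eqref{eqn:action up 5} as the images of shuffle-algebra generators under the action of \cite{FHHSY}, so that the relations of $\CA$ hold automatically and the only remaining task is to match the normalization of \loccit with the one adopted here -- a finite check on the kernels and on the $\zeta$-factor in \eqref{eqn:zeta}.
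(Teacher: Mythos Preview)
Your proposal contains the right ingredients but misses the key structural reduction that makes the proof tractable. The paper does not attempt to verify all instances of \eqref{eqn:relation 1}--\eqref{eqn:relation 2} directly. Instead, it invokes the triangular decomposition $\CA = \CA^+ \otimes \CA^0 \otimes \CA^-$ together with the Drinfeld realization of the elliptic Hall algebra due to Schiffmann \cite{S}, which reduces the problem to two much smaller checks: (i) that formulas \eqref{eqn:action up 4} and \eqref{eqn:action up 5} separately define actions of the subalgebras $\CA^+$ and $\CA^-$ on $\LA$; and (ii) that relations \eqref{eqn:relation 1}--\eqref{eqn:relation 2} hold in the special cases $n,n' \in \{-1,0,1\}$, which amounts to a finite list of five explicit identities (labeled \eqref{eqn:need 1}--\eqref{eqn:need 5} in the paper). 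Point (ii) is a short sequence of Heisenberg computations, all consequences of Proposition \ref{prop:classic plane}; point (i) is handled via the shuffle algebra, by symmetrizing the integrand over equal-radius contours and recognizing the resulting kernel as the shuffle product $R_{n,m} * R_{n',m'}$, so that the isomorphism $\CA^\pm \cong \CS$ of \cite{Shuf} transports the relations over.

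Your fallback to the shuffle algebra is therefore only half the story: the identification of \cite{Shuf} guarantees that the \emph{same-sign} relations among the $H_{\pm n,m}$ are respected, but says nothing about the mixed commutators $[\Psi(H_{n,m}), \Psi(H_{-n',m'})]$ or about $[\Psi(H_{\pm 1,k}), \Psi(P_{0,\pm m})]$. Without invoking \cite{S}, you would still face the full residue bookkeeping you flagged as the main obstacle, and your description of that computation already contains imprecisions: the poles relevant to the commutator of two $H$'s arise from the cross-$\zeta$ factors produced by normal ordering between the two groups of variables, not from the internal denominators $(1-z_{i+1}q/z_i)^{-1}$, which only couple variables within a single group. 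The paper's route sidesteps this entirely: once (i) is established via the shuffle algebra and (ii) is checked by hand, the presentation of \cite{S} glues the three pieces $\CA^+, \CA^0, \CA^-$ together automatically, with no general residue analysis required.
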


\begin{proof} We will sketch the proof, in order to prepare for the analogous argument in Theorem \ref{thm:action functions surface}. There is a well-known triangular decomposition:
$$
\CA = \CA^+ \otimes \CA^0 \otimes \CA^-
$$
where $\CA^\pm$ are the subalgebras of $\CA$ generated by $H_{\pm n,m}$ for $(n,m) \in \BN \times \BZ$, and $\CA^0$ is generated by $P_{0,\pm m}$ and the central elements $c_1,c_2$. The main result of \cite{S} implies that, in order to show that formulas \eqref{eqn:action up 1}--\eqref{eqn:action up 5} yield an action $\CA \curvearrowright \LA$, one needs to prove the following two things: \\

\begin{itemize}[leftmargin=*]

\item Formulas \eqref{eqn:action up 4} and \eqref{eqn:action up 5} induce actions of the subalgebras $\CA^+$ and $\CA^-$ on $\LA$. \\

\item The particular cases of \eqref{eqn:relation 1} and \eqref{eqn:relation 2} when $n,n' \in \{-1,0,1\}$ hold, i.e.:
\begin{align}
&\Big[\Psi(P_{0,\pm m}), \Psi(P_{0,\pm m'})\Big] = 0 \label{eqn:need 1} \\
&\Big[ \Psi(P_{0,m}), \Psi(P_{0,-m'}) \Big] = \delta_{m'}^m m (1-q_1^m)(1-q_2^m)q^m \label{eqn:need 2} \\
&\Big[ \Psi(H_{\pm 1,k}), \Psi(P_{0,\pm m}) \Big]= - (1-q_1^m)(1-q_2^m) \cdot \Psi(H_{\pm 1,k\pm m}) \label{eqn:need 3} \\
&\Big[ \Psi(H_{\pm 1,k}), \Psi(P_{0,\mp m}) \Big]= (1-q_1^m)(1-q_2^m)q^{m\delta_\pm^+} \cdot \Psi(H_{\pm 1,k\mp m}) \label{eqn:need 4} \\
&\Big[ \Psi(H_{1,k}), \Psi(H_{-1,k'}) \Big] = \frac {(1-q_1)(1-q_2)}{q^{-1}-1} \begin{cases} \Psi(A_{k+k'}) &\text{if } k+k' > 0 \\ 
1-q^k &\text{if } k+k' = 0 \\ - q^k \Psi(B_{-k-k'})
&\text{if } k+k' < 0 \end{cases}  \label{eqn:need 5}
\end{align}
for all $m,m' \in \BN$ and $k,k' \in \BZ$, where in the last expression, we write:
\begin{align*} 
&\sum_{m=0}^\infty \frac {A_m}{x^m} = \exp \left[ \sum_{m=1}^\infty \frac {p_m}{m x^m}(1-q^{-m}) \right] \\
&\sum_{m=0}^\infty \frac {B_m}{x^m} = \exp \left[ \sum_{m=1}^\infty \frac {p^\dagger_m}{m x^m}(1-q^m) \right]
\end{align*}

\end{itemize}

\noindent The second bullet is a consequence of straightforward computations using Proposition \ref{prop:classic plane}, which we leave as exercises to the interested reader. As for the first bullet, we note that formula \eqref{eqn:action up 5} reads:
\begin{equation}
\label{eqn:kp}
\Psi(H_{-n,m}) = \int_{|z_1| \ll ... \ll |z_n|} r_{n,m}(z_1,...,z_n) X(z_1,...,z_n) 
\end{equation}
where $r_{n,m}(z_1,...,z_n)$ (resp. $X(z_1,...,z_n)$) is the rational function (resp. the expression) in $z_1,...,z_n$ on the first (resp. second) line of \eqref{eqn:action up 5}. If we assume $q_1$ and $q_2$ to be complex numbers with absolute value greater than 1, then one can move the contours in the integral of \eqref{eqn:kp} to $|z_1|=...=|z_n|$, without picking up any new poles. Once one does this, because $X(z_1,...,z_n)$ is symmetric in $z_1,...,z_n$, then replacing $r_{n,m}$ with its symmetrization only changes the value of the integral by an overall factor of $n!$. Explicitly, this means that \eqref{eqn:kp} is equivalent to: 
\begin{equation}
\label{eqn:jp}
\Psi(H_{-n,m}) = \frac 1{n!} \int_{|z_1| = ... = |z_n|} R_{n,m}(z_1,...,z_n) X(z_1,...,z_n) 
\end{equation}
where $R_{n,m} = \text{Sym } r_{n,m}$. An elementary application of \eqref{eqn:computation 2} shows that:
$$
X(z_1,...,z_n) X(z_{n+1},...,z_{n+n'}) = X(z_1,...,z_{n+n'}) \prod^{1\leq i \leq n}_{n+1 \leq j \leq n+n'} \zeta \left(\frac {z_j}{z_i} \right)^{-1}
$$
Therefore, by applying \eqref{eqn:jp} twice, we obtain:
\begin{multline}
\label{eqn:lp}
\Psi(H_{-n,m}) \Psi(H_{-n',m'}) = \\ = \frac 1{(n+n)!}\int_{|z_1| = ... = |z_{n+n'}|} (R_{n,m} * R_{n',m'})(z_1,...,z_{n+n'})  X(z_1,...,z_{n+n'}) 
\end{multline}
where $R_{n,m} * R_{n',m'}$ denotes the rational function in $z_1,...,z_{n+n'}$ given by:
$$
\frac 1{n! n'!} \cdot \sym \left[R_{n,m}(z_1,...,z_n) R_{n',m'}(z_{n+1},...,z_{n+n'})  \prod^{1\leq i \leq n}_{n+1 \leq j \leq n+n'} \zeta \left(\frac {z_i}{z_j} \right) \right]
$$
The operation $*$ gives rise to an associative product on the vector space $\CS$ of symmetric rational functions with certain poles (\cite{FHHSY}), called the shuffle product. It was shown in \cite{Shuf} that the operation:
$$
(\CA^-,\cdot) \rightarrow (\CS,*) \qquad H_{-n,m} \mapsto R_{n,m}
$$
induces an algebra homomorphism. As we have seen by comparing formulas \eqref{eqn:jp} and \eqref{eqn:lp}, the operation:
$$
(\CS,*) \rightarrow (\text{End}(\LA), \cdot) \qquad R_{n,m} \mapsto \text{RHS of \eqref{eqn:jp}}
$$
is also an algebra homomorphism. Composing the aforementioned homomorphisms implies that formulas \eqref{eqn:jp} give a well-defined action of $\CA^-$ on $\LA$. The fact that formulas \eqref{eqn:action up 4} give rise to a well-defined action of $\CA^+$ on $\LA$ is proved analogously. 
	
\end{proof} 

\subsection{} 
\label{sub:pleth not} 

We will use the symbol $X$ to refer to the totality of the variables $x_1,x_2,...$, and thus we will denote the complete and elementary symmetric functions by:
\begin{align}
&\sum_{k=0}^\infty \frac {h_k}{z^k}  = \wedge^\bullet \left(- \frac Xz \right) \label{eqn:complete} \\
&\sum_{k=0}^\infty \frac {e_k}{(-z)^k} =  \wedge^\bullet \left(\frac Xz \right) \label{eqn:elementary}
\end{align}
where $\wedge^\bullet$ is a multiplicative symbol determined by the property that if a vector space $V$ has torus character $\chi$, then $\wedge^\bullet(\chi)$ denotes the torus character of the total exterior power $\wedge^\bullet(V)$. Elements of $\LA$ will generally be denoted by $f[X]$. We will adopt \underline{plethystic notation}, according to which one defines:
\begin{equation}
\label{eqn:plethysm}
f[X \pm (1-q_1)(1-q_2) z] \in \LA [z]
\end{equation}
to be the image of $f[X]$ under the ring homomorphism $\LA \rightarrow \LA[z]$ that sends:
\begin{equation}
\label{eqn:plethysm 2}
p_n \mapsto p_n \pm (1-q_1^n)(1-q_2^n)z^n
\end{equation}
In other words, one computes the plethysm \eqref{eqn:plethysm} by expanding $f[X]$ in the basis \eqref{eqn:basis}, and then replacing each $p_n$ therein according to \eqref{eqn:plethysm 2}. The reader may find a description of plethysm in the language of equivariant $K$--theory in Proposition \ref{prop:plethystic identity}. The following is a well-known and straightforward exercise: \\

\begin{proposition}
	\label{prop:plethysm}
	
	For any $f[X] \in \LA$ and any variable $z$, we have:
	\begin{equation}
	\label{eqn:pleth}
	f \left[ X \pm \left(1-q_1 \right)\left(1- q_2 \right) z \right] = \exp \left[\pm \sum_{k=1}^\infty \frac {p_k^\dagger z^k}k \right] \cdot f[X]
	\end{equation}
	where $p_k^\dagger$ is the adjoint operator defined in Subsection \ref{sub:a2}. \\ 
	
\end{proposition}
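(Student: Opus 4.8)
The plan is to recognize both sides of \eqref{eqn:pleth} as ring homomorphisms $\LA \to \LA[z]$ and to verify that they agree on the algebra generators $p_1, p_2, \dots$. The left-hand side is a ring homomorphism by the very definition of plethysm in \eqref{eqn:plethysm 2}, so the content lies entirely in showing that the right-hand side is one as well, and that it acts correctly on generators.

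First I would pin down the operator $p_k^\dagger$ explicitly. Recall that $\LA$ is, over the base ring $\BZ[q_1^{\pm 1}, q_2^{\pm 1}]$, the polynomial ring freely generated by $p_1, p_2, \dots$, with the monomial $p_\lambda$ obtained by applying the multiplication operators $p_{\lambda_i}$ to the unit $1 \in K_{\fS(0)}(\circ)$. Since $p_k^\dagger$ lowers the grading from $K_{\fS(l)}$ to $K_{\fS(l-k)}$, as in \eqref{eqn:res}, it annihilates the vacuum: $p_k^\dagger(1) = 0$. Combined with the commutation relation \eqref{eqn:classic heis}, namely $[p_k^\dagger, p_l] = k\delta_k^l (1-q_1^k)(1-q_2^k)$, this determines $p_k^\dagger$ completely, by moving it rightward past each multiplication operator until it reaches and kills the vacuum. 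The upshot is the identification
$$
p_k^\dagger = k(1-q_1^k)(1-q_2^k) \frac{\partial}{\partial p_k},
$$
where $\partial/\partial p_k$ is the formal derivation of $\LA$ treating the $p_j$ as independent variables; indeed this operator also kills $1$ and satisfies the same commutation relation with multiplication by $p_l$, and those two properties pin down the action uniquely.

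Next I would form the generating operator $\pm D := \pm \sum_{k\geq 1} p_k^\dagger z^k / k = \pm \sum_{k\geq 1} (1-q_1^k)(1-q_2^k) z^k \, \partial/\partial p_k$, where the $1/k$ cancels against the $k$ in the formula for $p_k^\dagger$. Being a base-linear combination of the commuting derivations $\partial/\partial p_k$, the operator $\pm D$ is itself a derivation of $\LA$ valued in $\LA[z]$, and hence its exponential $\exp(\pm D)$ is a ring homomorphism. It then suffices to compare the two ring homomorphisms on a generator $p_k$. On the one hand, plethysm sends $p_k \mapsto p_k \pm (1-q_1^k)(1-q_2^k) z^k$ by \eqref{eqn:plethysm 2}. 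On the other hand, $(\pm D)\, p_k = \pm (1-q_1^k)(1-q_2^k) z^k$ is a constant, killed by a further application of $D$, so $\exp(\pm D)\, p_k = p_k \pm (1-q_1^k)(1-q_2^k) z^k$ as well. The two homomorphisms agree on generators and therefore coincide, which is exactly \eqref{eqn:pleth}.

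The only point requiring care — and the step I expect to be the main, if modest, obstacle — is the passage from the commutation relation \eqref{eqn:classic heis} to the closed-form differential-operator expression for $p_k^\dagger$, that is, justifying that the Heisenberg relation together with vacuum annihilation pins down the action on \emph{all} of $\LA$ rather than merely the commutator. This is the standard Fock-space argument: every element of $\LA$ is a polynomial in the $p_k$ applied to the vacuum, and induction on the polynomial degree, using \eqref{eqn:classic heis} to commute $p_k^\dagger$ rightward, reduces any evaluation to the known value $p_k^\dagger(1) = 0$. One should also remark that, because $\exp(\pm D)$ applied to a fixed $f \in \LA$ involves only finitely many nonzero terms (as $f$ involves finitely many $p_k$ and $D$ strictly lowers polynomial degree), all manipulations take place in $\LA[z]$ and no convergence issue arises.
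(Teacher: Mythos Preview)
Your proposal is correct and is precisely the standard argument one would give; the paper itself does not supply a proof at all, instead labeling the proposition a ``well-known and straightforward exercise'' and moving on. Your write-up thus fills in exactly the details the paper leaves implicit, via the expected route: identify $p_k^\dagger$ with $k(1-q_1^k)(1-q_2^k)\,\partial/\partial p_k$ using the Heisenberg relation \eqref{eqn:classic heis} and vacuum annihilation, observe that the resulting $\pm D$ is a derivation so that $\exp(\pm D)$ is a ring homomorphism, and compare with the plethystic substitution on the generators $p_k$.
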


\subsection{}
\label{sub:new plane}

Using \eqref{eqn:complete}, \eqref{eqn:elementary} and \eqref{eqn:pleth}, formulas \eqref{eqn:action up 4}--\eqref{eqn:action up 5} take the form:
$$
\Psi(H_{n,m})(f[X]) = \int_{0, X \prec |z_n| \prec ... \prec |z_1| \prec \infty} \frac {\prod_{i=1}^n z_i^{\left \lfloor \frac {mi}n \right \rfloor - \left \lfloor \frac {m(i-1)}n \right \rfloor}}{\prod_{i=1}^{n-1} \left(1 - \frac {z_{i+1}q}{z_i} \right) \prod_{i<j} \zeta \left( \frac {z_j}{z_i} \right)}  
$$
\begin{equation}
\label{eqn:action left}
\wedge^\bullet \left( - \frac X{z_1} \right) ... \wedge^\bullet \left( - \frac X{z_n} \right) \cdot f \left[ X - \left(1-q_1 \right)\left(1- q_2 \right) \sum_{i=1}^n z_i \right] \prod_{a=1}^n  \frac {dz_a}{2\pi i z_a}
\end{equation}
and:
$$
\Psi(H_{-n,m})(f[X]) = \int_{0, X \prec |z_1| \prec ... \prec |z_n| \prec \infty} \frac {(-q)^n  \prod_{i=1}^n z_i^{\left \lfloor \frac {mi}n \right \rfloor - \left \lfloor \frac {m(i-1)}n \right \rfloor}}{\prod_{i=1}^{n-1} \left(1 - \frac {z_{i+1}q}{z_i} \right) \prod_{i<j} \zeta \left( \frac {z_j}{z_i} \right)}  
$$
\begin{equation}
\label{eqn:action right}
\wedge^\bullet \left( \frac X{z_1 q} \right) ... \wedge^\bullet \left( \frac X{z_n q} \right) \cdot f \left[ X + \left(1-q_1 \right)\left(1- q_2 \right) \sum_{i=1}^n z_i \right] \prod_{a=1}^n  \frac {dz_a}{2\pi i z_a}
\end{equation}
Above, the notation $0, X \prec |z_n| \prec ... \prec |z_1| \prec  \infty$ means that we integrate the variables $z_1,...,z_n$ over concentric circles that go in the prescribed order, and are contained between the poles at $0, x_1,x_2,...$ and the pole at $\infty$. Indeed, the variables $z_i$ must have absolute value larger than the variables $x_1, x_2,...$, in order for us to be able to replace the symbols $p_k$ in \eqref{eqn:action up 4}--\eqref{eqn:action up 5} by $x_1^k+x_2^k+...$. \\

\subsection{} 
\label{sub:torus}

We will work over an algebraically closed field of characteristic 0, henceforth denoted by $\BC$. Fix a line $\infty \subset \BP^2$, and let us write $\BA^2 = \BP^2 \backslash \infty$ for the complement. \\

\begin{definition}

Fix $r \in \BN$. For any $d \geq 0$, consider the moduli space:
\begin{equation}
\label{eqn:framed sheaves}
\CM^f_d = \Big\{ (\CF, \phi), \ \CF \text{ a torsion free sheaf on } \BP^2, \CF|_\infty \stackrel{\phi}\cong \CO_\infty^{\oplus r}, c_2(\CF) = d \Big\}
\end{equation}
It is a smooth quasiprojective algebraic variety of dimension $2rd$. \\

\end{definition}

\noindent An isomorphism $\phi$ as in \eqref{eqn:framed sheaves} is called a framing of the torsion-free sheaf $\CF$, and the pair $(\CF,\phi)$ is called a framed sheaf. We will write:
$$
\CM^f = \bigsqcup_{d = 0}^\infty \CM^f_d 
$$
(the rank $r$ of our sheaves will be fixed throughout the present paper). The torus:
$$
T = \BC^* \times \BC^* \times (\BC^*)^r
$$
acts on $\CM^f$ as follows: the first two factors $\BC^* \times \BC^*$ act on sheaves by their underlying action on the standard coordinate directions of $\BA^2$, while $(\BC^*)^r$ acts by multiplication on the isomorphism $\phi$ in \eqref{eqn:framed sheaves}. Therefore, we may consider:
\begin{equation}
\label{eqn:decomposition}
K_{T}(\CM^f) = \prod_{d = 0}^\infty K_{T}(\CM^f_d) 
\end{equation}
Let $\circ \in \BA^2$ denote the origin, and let us consider the derived restriction:
$$
\xymatrix{\CU_\circ \ar@{.>}[d]\\
\CM^f}
$$
of the universal sheaf $\CU$ on $\CM^f \times \BA^2$ to $\CM^f \times \{\circ\} \cong \CM^f$. \\

\subsection{} 
\label{sub:correspondences} 

We will now define certain operators on $K_T(\CM)$, which were shown in \cite{K-theory} to give rise to the elliptic Hall algebra action that was discovered earlier in \cite{FT,SV 1}. \\

\begin{definition}
\label{def:corr}
	
The following moduli spaces are smooth quasiprojective varieties:
\begin{align*}
&\fZ_1 = \Big\{ (\CF \supset_\circ \CF')\Big\}\\
&\fZ_2^\bullet =  \Big\{ (\CF \supset_\circ \CF' \supset_\circ \CF'') \Big\}
\end{align*}
where $\CF \supset_\circ \CF'$ means that $\CF \supset \CF'$ (as framed sheaves) and the quotient $\CF/\CF'$ is isomorphic to the length 1 coherent sheaf supported at $\circ \in \BA^2$. Consider the maps:
$$
\xymatrix{& \fZ_1 \ar[ld]_{p_-} \ar[rd]^{p_+} & \\ \CM & & \CM} \qquad \qquad \xymatrix{& (\CF \supset_\circ \CF') \ar[ld] \ar[rd] & \\ \CF & & \CF'}
$$
$$
\xymatrix{& \fZ_2^\bullet \ar[ld]_{\pi_-} \ar[rd]^{\pi_+} & \\ \fZ_1 & & \fZ_1} \qquad \xymatrix{& (\CF \supset_\circ \CF' \supset_\circ \CF'') \ar[ld] \ar[rd] & \\ (\CF \supset_\circ \CF') &  & (\CF' \supset_\circ \CF'')}
$$
and the line bundles:
$$
\xymatrix{\CL \ar@{.>}[d] \\ \fZ_1}
\qquad \qquad \qquad \xymatrix{\CF_\circ/\CF'_\circ \ar@{.>}[d] \\ (\CF \supset_\circ \CF')}
$$
$$
\xymatrix{\CL_1,\CL_2 \ar@{.>}[d] \\ \fZ^\bullet_2}
\ \ \qquad \xymatrix{\CF'_\circ/\CF''_\circ, \CF_\circ/\CF'_\circ \ar@{.>}[d] \\ (\CF \supset_\circ \CF' \supset_\circ \CF'')}
$$	
	
\end{definition}

\noindent The smoothness of these moduli spaces is proved by analogy with the corresponding statements in Definition \ref{def:corr surface}. However, all we need at the moment is the structure of $\fZ_1$ and $\fZ_2^\bullet$ as dg schemes, which was developed in \cite{K-theory}. The following is the main result of \loccit (see also \cite{Hecke} for notation closer to ours): \\

\begin{theorem} 
\label{thm:action moduli}	
	
There exists an action $\CA \stackrel{\Phi}\curvearrowright K_T(\CM^f)$ given by:
\begin{equation}
\label{eqn:action right 1}
c_1 \mapsto q^r, \qquad c_2 \mapsto 1,
\end{equation}
\begin{align} 
&P_{0,m} \ \mapsto \text{tensoring with } p_m(\CU_\circ) \label{eqn:action right 2} \\
&P_{0,-m} \mapsto  \text{tensoring with } - q^m \cdot p_m(\CU^\vee_\circ) \label{eqn:action right 3}
\end{align}
\footnote{Above, $p_m(\CU)$ means the $m$--th power sum functor: if $\CU_\circ = \sum_i \pm y_i \in K_T(\CM^f)$, then:
$$
p_m(\CU_\circ) = \sum_i \pm y_i^m \in K_T(\CM^f)
$$} while for all $n > 0$ and $m \in \BZ$, we have:
\begin{equation}
\label{eqn:action right 4}
H_{n,m} \mapsto p_{-*} \Big[ \CL^{d_n} \otimes \pi_{-*} \pi_+^* \Big[ \CL^{d_{n-1}} \otimes ... \otimes \pi_{-*} \pi_+^* \Big[ \CL^{d_1} \otimes p_+^* \Big] ... \Big] \Big] 
\end{equation}
and:
\begin{equation}
\label{eqn:action right 5}
H_{-n,m} \mapsto \left[ \frac {\det \CU_\circ}{(-q)^{r-1}}\right]^n \otimes p_{+*} \Big[ \CL^{d_1-r} \otimes ... \otimes \pi_{+*}\pi_-^* \Big[ \CL^{d_n-r} \otimes p_-^* \Big] ... \Big]
\end{equation}
where $d_i = \left \lfloor \frac {mi}n \right \rfloor - \left \lfloor \frac {m(i-1)}n \right \rfloor$. \\ 

\end{theorem}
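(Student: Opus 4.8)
The plan is to follow verbatim the template of the proof sketched for Theorem \ref{thm:action functions}, transporting it from the Fock/shuffle side to the geometric side. The backbone is again the triangular decomposition $\CA = \CA^+ \otimes \CA^0 \otimes \CA^-$ together with the presentation theorem of \cite{S}: in order to know that $\Phi$ respects the defining relations \eqref{eqn:relation 1} and \eqref{eqn:relation 2}, it is enough to check two things. First, that formulas \eqref{eqn:action right 4} and \eqref{eqn:action right 5} separately define actions of the subalgebras $\CA^+$ and $\CA^-$ on $K_T(\CM^f)$. Second, that the low-rank relations among the operators $\Phi(H_{\pm 1, k})$, $\Phi(P_{0,\pm m})$ and the central images $c_1 \mapsto q^r$, $c_2 \mapsto 1$ hold on the nose, namely the geometric counterparts of \eqref{eqn:need 1}--\eqref{eqn:need 5}.

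First I would establish the $\CA^\pm$ actions. The nested expression \eqref{eqn:action right 5} is an iterated pushforward--pullback along the Hecke correspondence $\fZ_2^\bullet$, each stage twisted by a power of the tautological line $\CL$; composing $\Phi(H_{-n,m})$ with $\Phi(H_{-n',m'})$ thus amounts to a convolution over a tower of flags $\CF \supset_\circ \dots \supset_\circ \CF^{(n+n')}$, in which the $\BC^* \times \BC^*$--weights of the successive one-dimensional quotients $\CF^{(i)}_\circ/\CF^{(i+1)}_\circ$ play exactly the role of the shuffle variables $z_1, \dots, z_{n+n'}$. Passing to the (isolated) $T$--fixed points of $\CM^f$, indexed by $r$--tuples of partitions, turns each such convolution into a sum of residues, and matches the geometric composition with the shuffle product $*$ on the space $\CS$ of \cite{FHHSY, Shuf}. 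Since \cite{Shuf} identifies $(\CA^-, \cdot) \to (\CS, *)$, $H_{-n,m} \mapsto R_{n,m}$ as an algebra homomorphism, and symmetrically for $\CA^+$, it then suffices to verify that the fixed-point residues produced by \eqref{eqn:action right 5} reproduce precisely $R_{n,m}$; the action of $\CA^\pm$ follows by composing homomorphisms, exactly as in the passage from \eqref{eqn:jp} to \eqref{eqn:lp}. The $\det \CU_\circ$ prefactor and the shifts $\CL^{d_i - r}$ in \eqref{eqn:action right 5} are there precisely to absorb the framing weights and reproduce the normalization $c_1 \mapsto q^r$.

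Next come the low-rank relations. Because $P_{0,\pm m}$ act by tensoring with $p_m(\CU_\circ)$ and $-q^m p_m(\CU_\circ^\vee)$, the analogues of \eqref{eqn:need 1} and \eqref{eqn:need 2} are immediate: tensoring operators commute, and this is consistent with the fact that the central term of \eqref{eqn:relation 1} carries the factor $(1 - c_2^{-m})$, which vanishes under $c_2 \mapsto 1$. The relations \eqref{eqn:need 3} and \eqref{eqn:need 4} express how tensoring by $p_m(\CU_\circ^{\pm})$ interacts with a single Hecke modification, and reduce to the comparison of the class of $\CU_\circ$ along the two legs $p_\pm$ of $\fZ_1 = \{\CF \supset_\circ \CF'\}$, which differ precisely by the tautological line $\CL = \CF_\circ/\CF'_\circ$; chasing the weight of $\CL$ through the power-sum functor yields the prefactors $(1-q_1^m)(1-q_2^m)$ and the index shifts $k \mapsto k \pm m$.

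The main obstacle is the mixed relation \eqref{eqn:need 5}, the commutator $[\Phi(H_{1,k}), \Phi(H_{-1,k'})]$. Geometrically this is the difference of the two orders of composing a raising and a lowering Hecke correspondence, and evaluating it requires the base-change and excess-intersection analysis of the fiber product of $\fZ_1$ with itself over $\CM^f$, computed in both orders. The two strata of this fiber product supply the two halves of \eqref{eqn:need 5}: the off-diagonal locus, where the two modifications occur at genuinely distinct steps, contributes (after the line-bundle twists and the $\det \CU_\circ$ prefactor) the generating series $A_{k+k'}$ and $B_{-k-k'}$, while the diagonal locus, where the raised and lowered points collide, contributes the scalar $1 - q^k$. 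Carrying this out demands careful bookkeeping of the normal bundles, of the two lines $\CL_1, \CL_2$ on $\fZ_2^\bullet$, and of the resulting Tor-classes, and it is exactly here that the delicate cancellation encoded by $Q_{n,m}$ (equivalently by the series $A_m, B_m$) must be made to appear; this is the technical heart of the argument and the step I expect to be genuinely laborious.
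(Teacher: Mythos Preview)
The paper does not actually prove this theorem: immediately before the statement it writes ``The following is the main result of \loccit'' (referring to \cite{K-theory}, with \cite{Hecke} for notation), so Theorem~\ref{thm:action moduli} is quoted rather than established here. There is therefore nothing in the paper to compare your argument against.

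That said, your outline is a faithful reconstruction of the strategy actually used in \cite{K-theory} and the related literature \cite{FT, SV 1, Shuf}: reduce via the triangular decomposition and Schiffmann's presentation \cite{S} to (i) checking that the iterated Hecke correspondences \eqref{eqn:action right 4}--\eqref{eqn:action right 5} realize $\CA^\pm$ through the shuffle-algebra isomorphism, and (ii) verifying the low-rank commutators. Your identification of the mixed relation $[\Phi(H_{1,k}),\Phi(H_{-1,k'})]$ as the crux, and of the excess-intersection analysis on the self-fiber-product of $\fZ_1$ as the mechanism producing the $A$/$B$ series and the scalar term, is exactly right. One small caution: the relations \eqref{eqn:need 1}--\eqref{eqn:need 5} in the paper are written for the $\Psi$--action with $(c_1,c_2)\mapsto(1,q^{-1})$; on the $\Phi$--side you must use the specialization $(c_1,c_2)\mapsto(q^r,1)$, so the precise form of the scalar constants and of which commutators vanish differs from \eqref{eqn:need 1}--\eqref{eqn:need 5} (you noted this correctly for \eqref{eqn:need 2}, but be sure to carry the $q^r$ through \eqref{eqn:need 5} as well).
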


\subsection{} 
\label{sub:universal}

Recall the decomposition \eqref{eqn:decomposition}, and consider the class of the structure sheaf:
\begin{equation}
\label{eqn:fundamental}
\b1_d \in K_T(\CM^f_d)
\end{equation}
For any symmetric polynomial $f[X] \in \LA$, we consider the so-called \underline{universal class}:
\begin{equation}
\label{eqn:universal class}
f[\CU_\circ] \in K_T(\CM^f_d)
\end{equation}
by applying the symmetric polynomial $f$ to the Chern roots of the universal sheaf $\CU_\circ$ on $\CM^f_d$. It is well-known that $K_T(\CM^f_d)$ is spanned by universal classes for every $d \geq 0$, i.e. by \eqref{eqn:universal class} as $f[X]$ ranges over $\LA$ (this fact holds for all Nakajima quiver varieties, of which $\CM^f_d$ is an example). Then formulas \eqref{eqn:action right 2} imply that $K_T(\CM^f_d)$ is generated by the operators $P_{0,1},P_{0,2},...$ acting on the class \eqref{eqn:fundamental}, for every $d \geq 0$. This also happens in the case of general surfaces, as we will see in Section \ref{sec:surfaces}. \\

\begin{proposition} 
\label{prop:action universal}
	
(\cite{W surf}) In terms of universal classes, \eqref{eqn:action right 4}--\eqref{eqn:action right 5} read:
$$
\Phi(H_{n,m})(f[\CU_\circ]) = \int_{\CU_\circ \prec |z_n| \prec ... \prec |z_1| \prec 0, \infty} \frac {\prod_{i=1}^n z_i^{\left \lfloor \frac {mi}n \right \rfloor - \left \lfloor \frac {m(i-1)}n \right \rfloor}}{\prod_{i=1}^{n-1} \left(1 - \frac {z_{i+1}q}{z_i} \right) \prod_{i<j} \zeta \left( \frac {z_j}{z_i} \right)}  
$$
\begin{equation}
\label{eqn:action universal 1}
\wedge^\bullet \left(- \frac {\CU_\circ}{z_1} \right) ... \wedge^\bullet \left(- \frac {\CU_\circ}{z_n} \right) \otimes f \left[\CU_\circ-(1-q_1)(1-q_2) \sum_{i=1}^n z_i \right] \prod_{a=1}^n \frac {dz_a}{2\pi i z_a}
\end{equation}
and:
$$
\Phi(H_{-n,m})(f[\CU_\circ]) = \int_{\CU_\circ \prec |z_1| \prec ... \prec |z_n| \prec 0, \infty} \frac {(-q)^{n}\prod_{i=1}^n z_i^{\left \lfloor \frac {mi}n \right \rfloor - \left \lfloor \frac {m(i-1)}n \right \rfloor}}{\prod_{i=1}^{n-1} \left(1 - \frac {z_{i+1}q}{z_i} \right) \prod_{i<j} \zeta \left( \frac {z_j}{z_i} \right)}  
$$
\begin{equation}
\label{eqn:action universal 2}
\wedge^\bullet \left(\frac {\CU_\circ}{z_1q} \right) ... \wedge^\bullet \left(\frac {\CU_\circ}{z_nq} \right) \otimes f \left[\CU_\circ+(1-q_1)(1-q_2) \sum_{i=1}^n z_i \right] \prod_{a=1}^n \frac {dz_a}{2\pi i z_a}
\end{equation}
where $\wedge^\bullet \left(\frac {\CU_\circ}z \right) = \sum_{i=0}^\infty (-z)^{-i} [\wedge^i ( \CU_\circ )]$. \\

\end{proposition}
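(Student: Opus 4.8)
The plan is to evaluate the iterated pushforwards in \eqref{eqn:action right 4}--\eqref{eqn:action right 5} on a universal class $f[\CU_\circ]$ by peeling off one Hecke modification at a time and recognizing each elementary pushforward as a one-variable residue. The basic input is the behaviour of the universal sheaf under a single modification: on $\fZ_1$ one has $\CF \supset_\circ \CF'$ with $\CF/\CF' \cong \CL$ a length-one sheaf at $\circ$, so in $K$-theory the two pullbacks to the origin differ by $p_-^*\CU_\circ - p_+^*\CU_\circ = (1-q_1)(1-q_2)\CL$, the Koszul factor $(1-q_1)(1-q_2)$ arising because the skyscraper at $\circ \in \BA^2$ is resolved by the Koszul complex of the two coordinate directions. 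Writing $z$ for the equivariant parameter of $\CL$, this gives $p_+^* f[\CU_\circ] = f\big[\CU_\circ - (1-q_1)(1-q_2)z\big]$, which already produces the plethystic shift appearing in \eqref{eqn:action universal 1}.

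Next I would compute the single-step operator $p_{-*}\big(\CL^{d}\otimes p_+^*(-)\big)$, corresponding to $n=1$ (so that $d_1=m$ and no interaction factors are present). The fibres of $p_-$ are projectivizations of $\CU_\circ$, with $\CL$ the tautological line, so the Gysin pushforward is governed by the $K$-theoretic projective-bundle formula. Expressing its denominator as a generating series in $z=\CL$ yields precisely $\wedge^\bullet(-\CU_\circ/z)=\prod_i(1-u_i/z)^{-1}$, where $u_1,u_2,\dots$ are the Chern roots of $\CU_\circ$; the poles of this factor at $z=u_i$ are exactly the torus-fixed points of the fibre, so the residue expansion of $\int z^m\,\wedge^\bullet(-\CU_\circ/z)\,f[\CU_\circ-(1-q_1)(1-q_2)z]\,\tfrac{dz}{2\pi i z}$ reproduces the finite sum over modifications computed geometrically. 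This establishes \eqref{eqn:action universal 1} for $n=1$.

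For general $n$ I would stitch together $n$ copies of $\fZ_1$ along the tower $\CF \supset_\circ \CF' \supset_\circ \cdots$, using $\fZ_2^\bullet$ to compose consecutive steps. Each step contributes one variable $z_i$ (the parameter of the $i$-th tautological line $\CL_i$) together with a factor $z_i^{d_i}$, while the plethystic shifts accumulate to $f[\CU_\circ-(1-q_1)(1-q_2)\sum_i z_i]$. The interaction factors $\zeta(z_j/z_i)$ and $(1-z_{i+1}q/z_i)^{-1}$ in the integrand are the $K$-theoretic Euler classes of the virtual normal bundles of the composed correspondence relative to $\fZ_2^\bullet$; this is the same $\zeta$-kernel that controls the shuffle product of \cite{FHHSY,Shuf}, so the geometric composition matches the algebraic one of Theorem \ref{thm:action functions}. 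The nesting of the subsheaves dictates the strict ordering $|z_n|\prec\cdots\prec|z_1|$ of the contours, and running the same argument with the roles of $p_-,p_+$ (and $\pi_-,\pi_+$) exchanged yields \eqref{eqn:action universal 2}, with the extra $(-q)^n$ and $\det\CU_\circ$ prefactors coming from the determinantal twist in \eqref{eqn:action right 5}.

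The main obstacle is the computation in the previous paragraph of the virtual normal bundles of the iterated correspondences, and the verification that their Euler classes assemble exactly into the factors $\zeta(z_j/z_i)$ and $(1-z_{i+1}q/z_i)^{-1}$ with the correct powers of $q$ and signs. Since the naive fibre products $\fZ_1\times_\CM\cdots\times_\CM\fZ_1$ are not of expected dimension, this step genuinely requires the dg-scheme structure of $\fZ_1$ and $\fZ_2^\bullet$ from \cite{K-theory} to make sense of the pushforwards. An alternative that trades derived geometry for combinatorics is to apply equivariant localization to the torus-fixed points of $\CM^f$, which are indexed by $r$-tuples of Young diagrams: there every tangent weight is explicit, the pushforwards become sums over box additions and removals, and one checks term-by-term that these sums coincide with the residue expansion of the claimed integrands. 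Either route reduces the Proposition to the bookkeeping of the edge factors, which is where essentially all the work lies.
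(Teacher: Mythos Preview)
The paper does not give its own proof of this Proposition: it is quoted from \cite{W surf}, and later in the paper (just before Proposition \ref{prop:action universal surface}) the author points to Theorem 3.16 of \cite{W} for the $S=\BA^2$ case at hand. So there is no in-paper argument to compare against; your sketch is essentially an outline of the proof in those references.

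That said, your outline is on the right track and identifies the correct ingredients. The single-step relation $p_-^*\CU_\circ - p_+^*\CU_\circ = (1-q_1)(1-q_2)\CL$ is exactly what produces the plethystic shift, and the $n=1$ pushforward along $p_\pm$ is indeed computed via a projective-bundle/residue formula, with the $\wedge^\bullet(-\CU_\circ/z)$ factor playing the role of the inverse Euler class whose poles at the Chern roots of $\CU_\circ$ reproduce the fixed-point sum. One small inaccuracy: the fibres of $p_-$ are projectivizations of the \emph{dual} of $\CU_\circ$ (surjections $\CF_\circ\twoheadrightarrow\BC$), while it is the fibres of $p_+$ that are $\BP(\text{Ext}^1(\BC_\circ,\CF'))$; this matters for getting the signs and $q$-twists in \eqref{eqn:action right 5} to come out right.

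You are also honest about where the actual work is: the factors $\zeta(z_j/z_i)$ and $(1-z_{i+1}q/z_i)^{-1}$ do not fall out of a naive fibre product but require the derived/dg structure on the iterated correspondences developed in \cite{K-theory}, or else the full torus-fixed-point bookkeeping of \cite{W}. Neither of these is a short computation, so what you have written is a proof \emph{plan} rather than a proof; to complete it you would need to carry out one of those two computations in detail, and the contour placement $\CU_\circ \prec |z_i| \prec 0,\infty$ (as opposed to the placement in \eqref{eqn:action left}--\eqref{eqn:action right}) must be justified by showing that the residues at $z_i=0$ and $z_i=\infty$ genuinely do not occur in the geometric pushforward. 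That last point is precisely what the paper exploits in the proof of Theorem \ref{thm:main}.
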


\noindent Recall from the last paragraph of Subsection \ref{sub:new plane} that the notation $\CU \prec |z_n| \prec ... \prec |z_1| \prec 0, \infty$ means that we integrate the variables $z_1,...,z_n$ over concentric circles that go in the prescribed order, and are contained between the Chern roots of the universal sheaf $\CU_\circ$ and the poles at $0$ and $\infty$. \\

\begin{proof}\emph{of Theorem \ref{thm:main}:} It is easy to see that the operator $\GA$ of \eqref{eqn:gamma} is given by:	
$$
\GA(f[X]) = f[\CU_\circ]
$$
in the notations of Subsections \ref{sub:pleth not} and \ref{sub:universal}, respectively. The fact that $\GA$ commutes with $P_{0,m}$ for any $m > 0$ is an immediate consequence of comparing \eqref{eqn:action up 2} and \eqref{eqn:action right 2}. As for the fact that $\Gamma$ intertwines $\Psi(H_{\pm n, m})$ with $\Phi(H_{\pm n, m})$ for all $n>0$ and $m > \mp nr$, this follows by comparing formulas \eqref{eqn:action left}--\eqref{eqn:action right} with \eqref{eqn:action universal 1}--\eqref{eqn:action universal 2}: either of these formulas involve one and the same integrand, the distinction between them being the location of the contours. Specifically, the contours in \eqref{eqn:action left}--\eqref{eqn:action right} differ from the ones in \eqref{eqn:action universal 1}--\eqref{eqn:action universal 2} only in which side of the contour the pole at 0 lies. The integrals are equal because the integrands are regular at 0 in each variable among $z_1$,...,$z_n$, which is easily seen to be the case for \eqref{eqn:action universal 1}--\eqref{eqn:action universal 2} when $m > \mp nr$. \\

\noindent Concerning the uniqueness statement, let us show that there exists at most a unique:
\begin{equation}
\label{eqn:gamma unique}
\Gamma = \prod_{d=0}^\infty \Gamma_d \quad \text{with} \quad \Gamma_d : \Lambda_{\BA^2,\text{loc}} \rightarrow K_T(\CM^f_d)_{\text{loc}} 
\end{equation}
where:
\begin{align*}
&\Lambda_{\BA^2,\text{loc}} = \LA \bigotimes_{\BZ[q_1^{\pm 1}, q_2^{\pm 1}]} \BQ(q_1,q_2) \\
&K_T(\CM^f_d)_{\text{loc}} = K_T(\CM^f_d) \bigotimes_{K_T(\circ)} \text{Frac}(K_T(\circ))
\end{align*}
such that $\Gamma$ is determined by the facts that $\Gamma_0(1) = \b1_0$ and that $\Gamma$ commutes with the action of $\Ar$, in the sense of diagram \eqref{eqn:main}. The commutativity with the operators $P_{0,m}$ for $m>0$ uniquely determine $\Gamma_0$. Meanwhile, we have the following. \\

\begin{claim}
\label{claim:generate}

Any class $\gamma \in K_T(\CM^f_d)_{\emph{loc}}$ is uniquely determined by the collection: 
$$
\Big\{ \Phi(H_{1,m_1}...H_{1,m_d})(\gamma) \in K_T(\CM^f_0)_{\emph{loc}} \Big\}
$$
as $m_1,...,m_d$ range over the integers $>- r$. \\

\end{claim}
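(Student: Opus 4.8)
The plan is to argue by induction on $d$, reducing the claim to a joint-injectivity statement for the single-box lowering operators $\Phi(H_{1,m})$, which I will then deduce from the joint surjectivity of the corresponding raising operators together with an adjunction between the two.

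First I would perform the inductive reduction. Suppose $\gamma \in K_T(\CM^f_d)_{\text{loc}}$ satisfies $\Phi(H_{1,m_1}\cdots H_{1,m_d})(\gamma) = 0$ for all integers $m_1,\dots,m_d > -r$. Since $\Phi(H_{1,m})$ lowers $c_2$ by one (see \eqref{eqn:action right 4}), for each fixed $m_1 > -r$ the class $\gamma_1 := \Phi(H_{1,m_1})(\gamma)$ lies in $K_T(\CM^f_{d-1})_{\text{loc}}$ and is killed by every $\Phi(H_{1,m_2}\cdots H_{1,m_d})$. The inductive hypothesis (with base case $d = 0$ trivial, as $K_T(\CM^f_0)_{\text{loc}}$ is one-dimensional over $\text{Frac}(\text{Rep}_T)$) forces $\gamma_1 = 0$. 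As $m_1 > -r$ was arbitrary, it remains to prove, for each $d \geq 1$, that the operators $\{\Phi(H_{1,m})\}_{m > -r}$ have trivial common kernel on $K_T(\CM^f_d)_{\text{loc}}$.

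Second, I would show that the raising operators are jointly surjective, namely
$$
K_T(\CM^f_d)_{\text{loc}} = \sum_{m' > r} \Phi(H_{-1,m'})\Big(K_T(\CM^f_{d-1})_{\text{loc}}\Big).
$$
This rests on two structural inputs. The cyclicity statement from the introduction gives $K_T(\CM^f)_{\text{loc}} = \Ar \cdot \b1_0$; pushing the lowering part $\CA^+ \cap \Ar$ and the Cartan part $\CA^0 \cap \Ar$ to the right (where they act on $\b1_0$ by the augmentation and by scalars, respectively) upgrades this to $K_T(\CM^f)_{\text{loc}} = (\CA^- \cap \Ar)\cdot \b1_0$. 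Since the negative half $\CA^- \cap \Ar$ is generated by the degree-one elements $\{H_{-1,m'}\}_{m' > r}$ — the higher $H_{-n,m'}$ ($n \geq 2$, $m' > nr$) arising from iterated commutators via \eqref{eqn:relation 2} and \eqref{eqn:qmn}--\eqref{eqn:emn}, with the slope bound $m' > nr$ preserved — its degree-$d$ graded piece is spanned by length-$d$ words, each beginning with some $H_{-1,m'}$. Applying this to $\b1_0$ yields the displayed surjectivity.

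Finally, I would combine surjectivity with adjunction. With respect to the equivariant pairing, which is nondegenerate after localization to $\text{Frac}(\text{Rep}_T)$ thanks to the isolated $T$-fixed points on the smooth $\CM^f_d$, the transpose of the correspondence $\fZ_1$ makes $\Phi(H_{-1,m'})$ and $\Phi(H_{1,m})$ mutually adjoint up to a nonzero scalar, the index shift being read off by comparing the powers of $\CL$ in \eqref{eqn:action right 4}--\eqref{eqn:action right 5} against the relative dualizing sheaf of $p_\pm$. The key point is that this shift is order-preserving and carries $\{m' > r\}$ into $\{m > -r\}$, so that the adjoints of the allowed raising operators sit among the allowed lowering operators. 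Granting this, for $\gamma$ in the common kernel of all $\Phi(H_{1,m})$ ($m > -r$) and any $\delta \in K_T(\CM^f_{d-1})_{\text{loc}}$ we get $\langle \gamma, \Phi(H_{-1,m'})\delta\rangle = (\text{scalar})\cdot\langle \Phi(H_{1,m})\gamma, \delta\rangle = 0$; since the classes $\Phi(H_{-1,m'})\delta$ span $K_T(\CM^f_d)_{\text{loc}}$ by the previous step, nondegeneracy gives $\gamma = 0$. The hard part will be the adjunction bookkeeping here: pinning down the correct twist of the equivariant pairing under which the two Hecke correspondences are exactly adjoint, and verifying that the resulting shift of the index $m$ is order-preserving with range inclusion $\{m' > r\} \hookrightarrow \{m > -r\}$ — this compatibility is precisely what singles out the half-algebra $\Ar$ and makes the whole scheme close up.
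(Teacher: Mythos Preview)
Your overall architecture matches the paper's exactly: both arguments reduce the claim, via induction on $d$ and the adjunction between $\Phi(H_{1,m})$ and $\Phi(H_{-1,m+r})$ under the (nondegenerate, after localization) Euler pairing, to the single statement that the raising operators $\Phi(H_{-1,m'})$ jointly surject from $K_T(\CM^f_{d-1})_{\text{loc}}$ onto $K_T(\CM^f_d)_{\text{loc}}$. The paper states the adjunction in one line (``the adjoint of $H_{1,m}$ \ldots\ is a multiple of $H_{-1,m+r}$''), so your detailed bookkeeping in step three is exactly what the paper compresses.

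The genuine divergence is in how surjectivity is proved. The paper works directly in the torus fixed-point basis $\{|\bla\rangle\}$: formula \eqref{eqn:matrix} gives $\Phi(H_{-1,m})|\bmu\rangle = \sum_{\bla = \bmu+\bsq} \chi_\bsq^m \,\tau_\bmu^\bla\, |\bla\rangle$, and since the finitely many addable boxes have pairwise distinct weights $\chi_\bsq$, a Vandermonde argument lets one isolate any single $|\bla\rangle$ by a suitable $\BF$-linear combination of the $\Phi(H_{-1,m})$. This is entirely self-contained. Your route instead invokes the cyclicity statement $K_T(\CM^f)_{\text{loc}} = \Ar\cdot \b1_0$ from the introduction, followed by a PBW reordering and the generation of $\CA^-\cap\Ar$ in degree one. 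The concern is circularity: within the paper, that cyclicity statement is not proved prior to this Claim --- Subsection \ref{sub:universal} only gives that $K_T(\CM^f_d)$ is generated by the $P_{0,m}$'s acting on $\b1_d$, and passing from the separate $\b1_d$'s down to $\b1_0$ is precisely the content of the surjectivity you are trying to establish. So either you must source cyclicity externally (from the literature on Fock representations of $\CA$), or you end up assuming what you want to prove. The paper's fixed-point Vandermonde argument sidesteps this entirely and is both shorter and logically cleaner; your structural argument is valid in principle but needs the cyclicity input justified independently.
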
 

\noindent The commutativity of diagram \eqref{eqn:main} implies that:
$$
\Phi(H_{1,m_1}...H_{1,m_d})(\Gamma_d(f)) = \Gamma_0(\Psi(H_{1,m_1}...H_{1,m_d})(f))
$$
for all $f \in \Lambda_{\BA^2,\text{loc}}$. Since we have already seen that $\Gamma_0$ is uniquely determined, then Claim \ref{claim:generate} implies that $\Gamma_d(f)$ is uniquely determined, for all $d \geq 0$ and all $f$. \\

\begin{proof}\emph{of Claim \ref{claim:generate}:} Let $\BF = \text{Frac}(K_T(\circ)) = K_T(\CM^f_0)_{\text{loc}}$, where the last equality is due to the fact that $\CM^f_0$ is a point. The Thomason equivariant localization theorem gives us the following isomorphism of $\BF$--vector spaces:
\begin{equation}
\label{eqn:localization}
K_T(\CM_d^f)_{\text{loc}} \cong \bigoplus_{\bla \text{ of size } d} \BF \cdot |\bla \rangle
\end{equation}
where $|\bla \rangle$ denotes the (renormalized) skyscraper sheaf at the $T$--fixed point of $\CM_d^f$ indexed by an $r$--partition $\bla$ of size $d$ (i.e. an $r$--tuple of partitions of total size $d$, we refer to \cite{K-theory, W} for a discussion of the connection between fixed points and $r$--partitions). The classes $|\bla \rangle$ form an orthogonal basis of \eqref{eqn:localization}, with respect to the equivariant Euler characteristic pairing. Since the adjoint of $H_{1,m}$ with respect to this pairing is a multiple of $H_{-1,m+r}$, the claim is equivalent to proving that:
$$
\Big\{ \Phi(H_{-1,m_1}...H_{-1,m_d})(K_T(\CM^f_0)_{\text{loc}}) \Big\}_{m_i > 0} \quad \text{span} \quad K_T(\CM^f_d)_{\text{loc}}
$$
We may prove this claim by induction on $d$, and it suffices to establish that:
\begin{equation}
\label{eqn:generate}
\Big\{ \Phi(H_{-1,m})(K_T(\CM^f_{d-1})_{\text{loc}}) \Big\}_{m > 0} \quad \text{span} \quad K_T(\CM^f_d)_{\text{loc}}
\end{equation}
To prove the claim above, let's consider an $r$--partition $\bmu$ of size $d-1$. We have:
\begin{equation}
\label{eqn:matrix}
\Phi(H_{-1,m})|\bmu\rangle = \sum_{\bla = \bmu+\bsq} \chi_\bsq^m \cdot \tau_\bmu^\bla |\bla \rangle
\end{equation}
where the right-hand side goes over all $r$--partitions obtained by adding a single box $\bsq$ to $\bmu$, and if this box is located at coordinates $(x,y)$ in the $i$--th constituent partition of $\bmu$, its weight is defined by $\chi_\bsq = u_i q_1^x q_2^y$ (see \cite{K-theory, W} for the aforementioned notions and formulas for the coefficients $\tau_\bmu^\bla$ that appear in \eqref{eqn:matrix}, but we remark that they do not depend on $m$). Since there are only finitely many ways to add a single box to the $r$--partition $\bmu$, and all of these boxes have different weights, it is clear that there exists a $\BF$--linear combination $H$ of the operators $H_{-1,1}, H_{-1,2},...$ such that $\Phi(H)|\bmu\rangle = |\bla \rangle$ for any fixed $\bla$. This completes the proof of \eqref{eqn:generate}. 

\end{proof} 

\end{proof}

\section{The case of general surfaces}
\label{sec:surfaces}

\subsection{} 
\label{sub:assumption}

Consider a smooth projective surface $S$ with an ample divisor $H$, and also fix $(r,c_1) \in \BN \times H^2(S, \BZ)$. Consider the moduli space $\CM^s$ of $H$--stable sheaves on the surface $S$ with the numerical invariants $r,c_1$ and any $c_2$. We make the following:
\begin{align}
&\textbf{Assumption A:} \qquad  \gcd(r, c_1 \cdot H) = 1, \ \text{ and} \label{eqn:assumption a} \\
&\textbf{Assumption S:} \ \qquad \text{either } \begin{cases} \CK_S \cong \CO_S \quad \ \ \text{or} \\ c_1(\CK_S) \cdot H < 0 \end{cases} \label{eqn:assumption s}
\end{align}
Assumption A implies that $\CM^s$ is representable, i.e. there exists a universal sheaf:
$$
\xymatrix{\CU \ar@{.>}[d] \\
\CM^s \times S}
$$
We fix a choice of $\CU$ throughout this paper. If we let $\CM^s_d \subset \CM^s$ denote the subspace of sheaves with $c_2 = d$, then we have a disconnected union:
$$
\CM^s = \bigsqcup_{d = \left \lceil \frac {r-1}{2r} c_1^2 \right \rceil}^\infty \CM^s_{d}
$$
where the fact that $d$ is bounded below is a consequence of Bogomolov's inequality. Each $\CM^s_{d}$ is projective (by Assumption A) and smooth (by Assumption S). In the present paper, we will work with the $K$--theory groups:
$$
K(\CM^s) = \prod_{d = \left \lceil \frac {r-1}{2r} c_1^2 \right \rceil}^\infty K(\CM^s_d)
$$
We refer the reader to \cite{Shuf surf, Hecke} for an introduction to basic facts on the moduli space of stable sheaves, as pertains to the present paper. \\

\subsection{} 
\label{sub:weak}

Since $K(\CM^s \times S)  \not \cong K(\CM^s)$, as opposed from the case $S=\BA^2$ studied previously, we must take care what we mean by ``algebras acting on $K$--theory groups". In Definitions \ref{def:weak} and \ref{def:strong}, we let $X$ be any smooth quasiprojective algebraic variety. \\

\begin{definition}
\label{def:weak}
	
A \underline{weak action} $\CA \stackrel{\Phi}\curvearrowright_S K(X)$ is an abelian group homomorphism:
\begin{equation}
\label{eqn:def phi}
\A \xrightarrow{\Phi} \emph{Hom}(K(X), K(X \times S))
\end{equation}
such that: \\

\begin{itemize}[leftmargin=*]

\item $\Phi(1)$ is the standard pull-back map; \\	
	
\item for all $a \in \CA$ and $f \in \BZ[q_1^{\pm 1}, q_2^{\pm 1}]^{\emph{Sym}}$, we require $\Phi(f \cdot a)$ to equal the composition:
\begin{equation}
\label{eqn:eq par}
K(X) \xrightarrow{\Phi(a)} K(X \times S) \xrightarrow{\emph{Id}_{X} \boxtimes  f(q_1,q_2)} K(X \times S)
\end{equation}
where $q_1,q_2$ are identified with the Chern roots of $[\Omega_S^1] \in K(S)$; \\

\item for all $a,b \in \CA$, we require $\Phi(ab)$ to equal the composition:
\begin{equation}
\label{eqn:hom}
K(X) \xrightarrow{\Phi(b)} K(X \times S) \xrightarrow{\Phi(a) \boxtimes \emph{Id}_S} K(X \times S \times S) \xrightarrow{\emph{Id}_{X} \boxtimes \Delta^*} K(X \times S)
\end{equation} 
(where $\Delta : S \hookrightarrow S \times S$ is the diagonal). \\

\end{itemize}

\end{definition}

\noindent We will apply the definition above when $X = \CM^s$ or $X = \bigsqcup_{k=0}^\infty S^k$. \\

\subsection{}
\label{sub:strong} 

Note that in the definition of a weak action, the composition of operators $\Phi(a)$ and $\Phi(b)$ only records what happens on the diagonal of $S \times S$. To understand the behavior off the diagonal, we introduce the following stronger notion. \\

\begin{definition}
\label{def:strong}

A weak action as in Definition \ref{def:weak} is called \underline{strong} if, for all $a,b \in \CA$, we have the following equality of operators $K(X) \rightarrow K(X \times S \times S)$:
\begin{equation}
\label{eqn:comm phi}
[\Phi(a), \Phi(b)] = (\emph{Id}_X \boxtimes \Delta)_*\left[\Phi \left( \frac {[a,b]}{(1-q_1)(1-q_2)} \right) \right]
\end{equation}	
where the left-hand side of \eqref{eqn:comm phi} denotes the difference of the compositions:
\begin{align}
&K(X) \xrightarrow{\Phi^{(2)}(b)} K(X \times S_2) \xrightarrow{\Phi^{(1)}(a) \boxtimes \emph{Id}_{S_2}} K(X \times S_1 \times S_2) \label{eqn:diff 1} \\
&K(X) \xrightarrow{\Phi^{(1)}(a)} K(X \times S_1) \xrightarrow{\Phi^{(2)}(b) \boxtimes \emph{Id}_{S_1}} K(X \times S_1 \times S_2) \label{eqn:diff 2}
\end{align}
(we write $S_i$ instead of $S$ and $K(X) \xrightarrow{\Phi^{(i)}(a)} K(X \times S_i)$ instead of $\Phi(a)$, $\forall i \in \{1,2\}$, in order to better illustrate the two factors of $S$ involved in \eqref{eqn:diff 1}--\eqref{eqn:diff 2}). \\

\end{definition}

\noindent The right-hand side of \eqref{eqn:comm phi} is well-defined, because (see \cite{W surf}) the commutator of any two elements of $\CA$ is a multiple of $(1-q_1)(1-q_2)$. The following operators:
\begin{equation}
\label{eqn:reduced}
[\Phi(a), \Phi(b)]_{\text{red}} = \Phi \left( \frac {[a,b]}{(1-q_1)(1-q_2)} \right)
\end{equation}
which act between $K(X) \rightarrow K(X \times S)$, will be called the \underline{reduced commutators}. \\

\begin{remark}
\label{rem:leibniz jacobi}
	
Consider any $\alpha, \beta, \gamma :  K(X) \rightarrow K(X \times S)$, and any $f \in \BZ[q_1^{\pm 1}, q_2^{\pm 1}]^{\esym}$. Let us define the operators:
$$
f \alpha, \ \alpha \beta : K(X) \rightarrow K(X \times S)
$$
and:
$$
[\alpha, \beta] : K(X) \rightarrow K(X \times S \times S)
$$
by replacing $\Phi(a)$ and $\Phi(b)$ in \eqref{eqn:eq par}, \eqref{eqn:hom}, \eqref{eqn:diff 1}, \eqref{eqn:diff 2} with $\alpha$ and $\beta$. Then we have the following associativity properties:
\begin{equation}
\label{eqn:associativity}
(f \alpha) \beta = f (\alpha \beta), \qquad (\alpha \beta)\gamma = \alpha (\beta \gamma)
\end{equation}
Moreover, assume that the commutator of any two of $\alpha,\beta,\gamma$ is supported on the diagonal $\Delta \subset S \times S$, i.e. we have the following equality $K(X) \rightarrow K(X \times S \times S)$:
\begin{equation}
\label{eqn:red}
[\alpha, \beta] = (\emph{Id}_X \boxtimes \Delta)_*([\alpha, \beta]_{\emph{red}})
\end{equation}
for some operator $[\alpha, \beta]_{\emph{red}} : K(X) \rightarrow K(X \times S)$, and the analogous formulas for the pairs $(\beta, \gamma)$ and $(\alpha, \gamma)$. \footnote{If \eqref{eqn:red} holds for some operator $[\alpha, \beta]_{\text{red}}$, then this operator is unique, due to the fact that the map $(\text{Id}_X \boxtimes \Delta)_*$ has a left inverse given by $(\text{Id}_X \boxtimes \text{proj}_1)_*$} Then the following Leibniz rule holds:
\begin{equation}
\label{eqn:leibniz}
[\alpha \beta, \gamma]_{\emph{red}} = \alpha [\beta,\gamma]_{\emph{red}} + [\alpha, \gamma]_{\emph{red}} \beta
\end{equation}
and the following Jacobi identity holds:
\begin{equation}
\label{eqn:jacobi}
\sum_{\emph{cyclic}} [\alpha,[\beta, \gamma]_{\emph{red}}]_{\emph{red}} = 0
\end{equation}
The claims \eqref{eqn:associativity}, \eqref{eqn:leibniz} and \eqref{eqn:jacobi} are straightforward exercises. \\
	
\end{remark} 

\subsection{} Let us apply Definitions \ref{def:weak} and \ref{def:strong} to the case $X = \CM^f$, $S = \BA^2$ and that of $T$--equivariant $K$--theory. In this case, composing the action map:
$$
\CA \xrightarrow{\Phi} \text{Hom}(K_T(\CM^f), K_T(\CM^f \times \BA^2)) 
$$
with the restriction to the origin $\circ \in \BA^2$ (which is an isomorphism), we obtain:
$$
\A \xrightarrow{\Phi'} \text{End}(K_T(\CM^f)) 
$$
Property \eqref{eqn:eq par} states that $q_1$ and $q_2$ are the equivariant Chern roots of $[\Omega_{\BA^2}^1]$, property \eqref{eqn:hom} states that $\Phi'(ab) = \Phi'(a)\Phi'(b)$, while property \eqref{eqn:comm phi} states that:
$$
[\Phi'(a), \Phi'(b)] = \Phi'([a,b])
$$
the reason being that $\Delta^*\Delta_* = (1-q_1)(1-q_2)$ if $\Delta : \BA^2 \hookrightarrow \BA^2 \times \BA^2$ is the diagonal. The conclusion is that $\Phi'$ yields an honest action of $\CA$ on $K_T(\CM^f)$. \\

\begin{remark} Definitions \ref{def:weak} and \ref{def:strong} are inspired by the Heisenberg algebra action on the cohomology groups of Hilbert schemes that was developed by Grojnowski (\cite{G}) and Nakajima (\cite{Nak}). This construction can be interpreted as ``operators on the cohomology groups of Hilbert schemes of points on a surface $S$, indexed by a cohomology class on $S$". Indeed, if:
\begin{equation}
\label{eqn:phi gamma}
\Phi(a)^{(\gamma)} : K(\CM^s) \rightarrow K(\CM^s)
\end{equation}
denotes the composition:
$$
K(\CM^s) \xrightarrow{\Phi(a)} K(\CM^s \times S) \xrightarrow{\emph{Id}_{\CM^s} \boxtimes \gamma} K(\CM^s \times S) \xrightarrow{\pi_*} K(\CM^s)
$$
for any $\gamma \in K(S)$ (where $\pi:\CM^s \times S \rightarrow \CM^s$ is the projection), then \eqref{eqn:comm phi} reads:
$$
\left[ \Phi(a)^{(\gamma)}, \Phi(b)^{(\delta)} \right] = \Phi \left( \frac {[a,b]}{(1-q_1)(1-q_2)} \right)^{(\gamma \delta)}
$$
for any $\gamma, \delta \in K(S)$. The particular case of the formula above when $a = P_{n,0}$ and $b = P_{n',0}$ yields precisely a Heisenberg algebra action in the sense of \loccit However, since in $K$--theory one does not have a K\"unneth decomposition, the datum of the homomorphism $\Phi(a)$ is stronger than the totality of the endomorphisms \eqref{eqn:phi gamma}. \\

\end{remark}

\subsection{} Let us present the analogues of the correspondences of Subsection \ref{sub:correspondences} with $(\CM^f, \BA^2)$ replaced by $(\CM^s, S)$, and use them to construct an action $\CA \curvearrowright_S K(\CM^s)$. \\

\begin{definition}
\label{def:corr surface}
	
The following moduli spaces are smooth projective varieties:
\begin{align*}
&\fZ_1 = \Big\{ (\CF \supset_x \CF') \text{ for some } x \in S\Big\}\\
&\fZ_2^\bullet =  \Big\{ (\CF \supset_x \CF' \supset_x \CF'') \text{ for some } x \in S \Big\}
\end{align*}
Consider the maps:
$$
\xymatrix{& \fZ_1 \ar[ld]_{p_-} \ar[d]^{p_S} \ar[rd]^{p_+} & \\ \CM & S & \CM} \qquad \qquad \xymatrix{& (\CF \supset_x \CF') \ar[ld] \ar[d] \ar[rd] & \\ \CF & x & \CF'}
$$
$$
\xymatrix{& \fZ_2^\bullet \ar[ld]_{\pi_-} \ar[rd]^{\pi_+} & \\ \fZ_1 & & \fZ_1} \qquad \xymatrix{& (\CF \supset_x \CF' \supset_x \CF'') \ar[ld] \ar[rd] & \\ (\CF \supset_x \CF') &  & (\CF' \supset_x \CF'')}
$$
and the line bundles:
$$
\xymatrix{\CL \ar@{.>}[d] \\ \fZ_1}
\qquad \qquad \qquad \xymatrix{\CF_x/\CF'_x \ar@{.>}[d] \\ (\CF \supset_x \CF')}
$$
$$
\xymatrix{\CL_1,\CL_2 \ar@{.>}[d] \\ \fZ^\bullet_2}
\ \ \qquad \xymatrix{\CF'_x/\CF''_x, \CF_x/\CF'_x \ar@{.>}[d] \\ (\CF \supset_x \CF' \supset_x \CF'')}
$$	
	
\end{definition}

\noindent We refer the reader to \cite{Shuf surf} for the statements pertaining to $\fZ_1$ (although they were known for a long time, see \cite{ES}) and to \cite{W surf} for the statements pertaining to $\fZ_2^\bullet$. \\

\subsection{} The following analogue of Theorem \ref{thm:action moduli} was proved in \cite{Hecke}. \\

\begin{theorem}
\label{thm:action surface}
	
There exists a strong action $\CA \stackrel{\Phi}\curvearrowright_S K(\CM^s)$ given by:
\begin{equation}
\label{eqn:action right surface 1}
c_1 \mapsto q^r, \qquad c_2 \mapsto 1,
\end{equation}
(recall from Definition \ref{def:weak} that $q_1,q_2$ are identified with the Chern roots of $[\Omega_S^1]$, hence $q = q_1q_2$ is identified with the canonical line bundle $[\CK_S]$) and:
\begin{align} 
&P_{0,m} \ \ \mapsto \left[ K(\CM^s) \xrightarrow{\text{pull-back}} K(\CM^s \times S)\xrightarrow{\otimes p_m(\CU)} K(\CM^s \times S) \right] \label{eqn:action right surface 2} \\
&P_{0,-m} \mapsto \left[ K(\CM^s) \xrightarrow{\text{pull-back}} K(\CM^s \times S)\xrightarrow{\otimes (-q^m p_m(\CU^\vee))} K(\CM^s \times S) \right] \label{eqn:action right surface 3}
\end{align}
while for all $n > 0$ and $m \in \BZ$, we have:
\begin{equation}
\label{eqn:action right surface 4}
H_{n,m} \mapsto (p_- \times p_S)_* \Big[ \CL_n^{d_n} \otimes \pi_{-*} \pi_+^* \Big[ \CL_{n-1}^{d_{n-1}} \otimes ... \pi_{-*} \pi_+^* \Big[ \CL_1^{d_1} \otimes p_+^* \Big] ... \Big] \Big] 
\end{equation}
and:
\begin{equation}
\label{eqn:action right surface 5}
H_{-n,m} \mapsto \left[ \frac {\det \CU}{(-q)^{r-1}} \right]^n \otimes (p_+ \times p_S)_* \Big[ \CL_1^{d_1-r} \otimes ... \otimes \pi_{+*} \pi_-^* \Big[ \CL_n^{d_n-r} \otimes p_-^* \Big] ... \Big] 
\end{equation}
where $d_i = \left \lfloor \frac {mi}n \right \rfloor - \left \lfloor \frac {m(i-1)}n \right \rfloor$. \\
		
\end{theorem}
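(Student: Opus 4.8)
The plan is to run the same reduction used in the proof of Theorem~\ref{thm:action functions}, but now carried out through the intersection theory of the correspondences of Definition~\ref{def:corr surface} rather than through contour integrals. By the presentation result of \cite{S}, in order to show that formulas \eqref{eqn:action right surface 1}--\eqref{eqn:action right surface 5} respect the relations \eqref{eqn:relation 1}--\eqref{eqn:relation 2} of $\CA$, it suffices to check two things: that \eqref{eqn:action right surface 4} and \eqref{eqn:action right surface 5} separately define (weak) actions of the subalgebras $\Ap$ and $\Am$, and that the surface analogues of the low-rank identities \eqref{eqn:need 1}--\eqref{eqn:need 5}, now interpreted as equalities of operators $K(\CM^s) \to K(\CM^s \times S)$, hold. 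The decisive difference from the affine setting is that a general $S$ carries no torus with isolated fixed points, so none of these verifications may invoke Thomason localization; each identity must instead be extracted directly from the geometry of $\fZ_1$ and $\fZ_2^\bullet$.

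First I would handle the subalgebras $\Apm$. As in the affine argument, $\Am$ (resp.\ $\Ap$) maps to the shuffle algebra $\CS$ via $H_{-n,m} \mapsto R_{n,m}$ (this is the content of \cite{Shuf surf}), and the iterated pushforward-pullback in \eqref{eqn:action right surface 5} (resp.\ \eqref{eqn:action right surface 4}) is designed to realize the shuffle product $*$ geometrically. Composing the operators attached to $H_{-n,m}$ and $H_{-n',m'}$ yields a correspondence supported on flags of colength $n+n'$ at a single point $x \in S$; the excess intersection controlling the maps $\pi_\pm$ on $\fZ_2^\bullet$ (resolved by the appropriate Koszul complex, with the factor $\zeta(z_j/z_i)$ of the affine computation replaced by its sheaf-theoretic counterpart on $S$) should identify this composition with $R_{n,m} * R_{n',m'}$, thereby reducing the subalgebra statement to the homomorphism property just cited.

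Next I would verify the low-rank relations in the strong form \eqref{eqn:comm phi}. The relations involving only $P_{0,\pm m}$, and their commutators with $H_{\pm 1,k}$, follow from the surface analogue of Proposition~\ref{prop:classic plane} together with the compatibility of tensoring by $p_m(\CU)$ with the Hecke correspondences. The essential case is the commutator $[\Phi(H_{1,k}), \Phi(H_{-1,k'})]$, regarded as an operator $K(\CM^s) \to K(\CM^s \times S \times S)$: here $\Phi(H_{1,k})$ adds a length-one quotient at a point of one copy of $S$ while $\Phi(H_{-1,k'})$ removes one at a point of the other copy, and the two orders of composition are compared through the two fiber products of $\fZ_1$ with itself over $\CM^s$. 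Away from the diagonal $\Delta \subset S \times S$ these compositions coincide, so the commutator is supported on $\Delta$; pushing the excess term through the self-intersection formula $\Delta^*\Delta_* = (1-q_1)(1-q_2)$ should yield precisely the reduced commutator, whose generating function must then be matched against the surface analogue of the series $A_m, B_m$ from \eqref{eqn:need 5}. Once \eqref{eqn:comm phi} is known for the generators, the Leibniz rule \eqref{eqn:leibniz} and Jacobi identity \eqref{eqn:jacobi} of Remark~\ref{rem:leibniz jacobi} propagate it to all of $\CA$, upgrading the weak action to a strong one.

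The hard part will be exactly this last commutator. The off-diagonal vanishing amounts to a transversality statement for two Hecke modifications at distinct points of $S$, which is geometrically transparent; the genuine difficulty concentrates on the diagonal, where the two modification points collide and one must analyze the excess intersection of $\fZ_1 \times_{\CM^s} \fZ_1$ together with the twists by $\CL$ and $\det \CU$ appearing in \eqref{eqn:action right surface 4}--\eqref{eqn:action right surface 5}. Isolating from this the exact factor $(1-q_1)(1-q_2)$ and the correct series $A_m, B_m$, without the fixed-point bookkeeping available in the affine case, is the technical heart of the argument, and is where the smoothness of $\fZ_1$ and $\fZ_2^\bullet$ guaranteed by Assumption~S is genuinely needed.
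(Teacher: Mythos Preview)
The paper does not prove Theorem~\ref{thm:action surface} at all: the sentence introducing it reads ``The following analogue of Theorem~\ref{thm:action moduli} was proved in \cite{Hecke}'', and the result is simply imported from that reference. There is no argument in the present paper to compare your proposal against.

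That said, your outline is a reasonable sketch of the strategy carried out in \cite{Hecke} (and in the companion paper \cite{Shuf surf} for the shuffle-algebra half): reduce via the presentation of \cite{S} to the two halves $\CA^\pm$ plus the low-rank relations among $P_{0,\pm m}$ and $H_{\pm 1,k}$; realize the $\CA^\pm$ actions through the geometry of the flag correspondences and identify their composition law with the shuffle product; show that the commutator $[\Phi(H_{1,k}),\Phi(H_{-1,k'})]$ is supported on the diagonal of $S \times S$ by comparing the two fiber products of $\fZ_1$ over $\CM^s$, and compute the excess contribution there; and finally propagate \eqref{eqn:comm phi} from generators to all of $\CA$ via the Leibniz rule and Jacobi identity of Remark~\ref{rem:leibniz jacobi}. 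Each of these steps, especially the diagonal excess-intersection computation and the matching of its generating series with the right-hand side of \eqref{eqn:need 5}, is substantial and constitutes the actual content of \cite{Hecke}; what you have written is a correct roadmap rather than a proof, but since the paper itself offers only a citation, that is the appropriate level of comparison.
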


\subsection{} 

The analogue of the ring of symmetric functions for an arbitrary surface is:
$$
\LS = \bigoplus_{k=0}^\infty K_{\fS(k)} (S^k)
$$
where $\fS(k)$ permutes the factors of $S^k$. We will (slightly abusively) write:
\begin{align*} 
&\LSS = \bigoplus_{k=0}^\infty K_{\fS(k)} (S^k \times S) \\
&\LSSS = \bigoplus_{k=0}^\infty K_{\fS(k)} (S^k \times S \times S)
\end{align*}
where $\fS(k)$ does not act on the last factor of $S$ or $S \times S$. There exist analogues of the operators \eqref{eqn:ind res} of parabolic induction and restriction, respectively:
\begin{equation}
\label{eqn:operators}
p_k, p_k^\dagger : \LS \longrightarrow \LSS
\end{equation}
which take the form:
\begin{align} 
&K_{\fS(l)} (S^l) \xrightarrow{p_k} K_{\fS(k+l)} (S^{k+l} \times S) \label{eqn:induction surface} \\
&K_{\fS(k+l)} (S^{k+l}) \xrightarrow{p^\dagger_k} K_{\fS(l)} (S^l \times S) \label{eqn:restriction surface}
\end{align} 
and are explicitly given by:
\begin{align} 
&p_k(M) = \text{Ind}_{\fS(k) \times \fS(l)}^{\fS(k+l)} \Big( \underbrace{ \left[p_k \otimes  \CO_{\Delta_{1...k\bullet}} \right] }_{\text{sheaf on } S^k \times S} \ \boxtimes \underbrace{M}_{\text{sheaf on }S^l} \Big) \label{eqn:ind surf} \\ 
&p_k^\dagger(M) = \text{Hom}_{\fS(k)}\left( p_k,  \text{Res}_{\fS(k) \times \fS(l)}^{\fS(k+l)} (M)\Big|_{\Delta_{1...k \bullet \times S^l}} \right) \label{eqn:res surf}
\end{align}
where $\Delta_{1...k\bullet} \subset S^k \times S$ is the small diagonal. In the right-hand side of \eqref{eqn:res surf}, we implicitly pull-back $M$ from $S^{k+l}$ to $S^{k+l} \times S$, and then restrict it to the diagonal obtained by identifying the first $k$ and the last factor, thus obtaining a sheaf on $S^l \times S$. \\

\begin{proposition}
\label{prop:classic surface}

The operators $p_k$, $p_k^\dagger$ give rise to a strong action (in the sense of Definitions \ref{def:weak} and \ref{def:strong}) of the infinite-dimensional Heisenberg algebra, i.e.:
\begin{equation}
\label{eqn:heis surface}
[p_k^\dagger, p_l] = (\emph{Id}_{\LS} \boxtimes \Delta)_* \left( \rho^*\left[\delta_k^l k \frac {(1-q_1^k)(1-q_2^k)}{(1-q_1)(1-q_2)} \right] \otimes \pi^* \right)
\end{equation}
as well as $[p_k,p_l] = [p_k^\dagger, p_l^\dagger] = 0$, as homomorphisms $\LS \rightarrow \LSSS$, where:
$$
\xymatrix{& {\bigsqcup_{n=0}^{\infty}} S^n \times S \ar[ld]_\pi \ar[rd]^\rho & \\
{\bigsqcup_{n=0}^{\infty}} S^n & & S}
$$
are the standard projections. In the left-hand side of \eqref{eqn:heis surface}, the operator $p_k^\dagger$ (respectively $p_l$) acts only on the first (respectively second) factor of $S \times S$. \\

\end{proposition}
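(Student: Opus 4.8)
The plan is to reduce the entire statement to the well-understood affine/point-level computation of Proposition \ref{prop:classic plane} (equivalently Theorem \ref{thm:classic}) by a careful bookkeeping of where the various diagonals in products of $S$ sit. The operators $p_k$ and $p_k^\dagger$ are built out of induction/restriction together with restriction to (resp. tensoring along) the small diagonal $\Delta_{1\dots k\bullet}$, so the only genuinely new phenomenon relative to the point case is the geometry of these diagonals. First I would verify the two ``obvious'' relations $[p_k,p_l]=0$ and $[p_k^\dagger,p_l^\dagger]=0$, which should follow formally: two induction operators commute because $\mathrm{Ind}$ is associative and the two copies of the small diagonal $\Delta_{1\dots k\bullet}$ and $\Delta_{\dots\bullet}$ involve disjoint blocks of factors (so there is no overlap to produce a correction term), and dually for the two restrictions. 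These are genuinely the easy cases and I would dispatch them by the associativity of parabolic induction/restriction combined with the fact that the relevant diagonals are transverse.

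The heart of the matter is the mixed commutator $[p_k^\dagger, p_l]$. I would compute both compositions $p_k^\dagger \circ p_l$ and $p_l \circ p_k^\dagger$ on a class $M \in K_{\fS(n)}(S^n)$ using the explicit formulas \eqref{eqn:ind surf}--\eqref{eqn:res surf}, via the Mackey/double-coset formula for $\mathrm{Res}\,\mathrm{Ind}$. The double cosets of $\fS(k)\times\fS(n)$ in $\fS(k+n)$ (applied after the induction along $\fS(l)\times\fS(n)$) split into terms in which the $k$ inserted boxes and the $l$ inserted boxes are disjoint, and the single term in which they are matched up (possible only when $k=l$); the disjoint terms are exactly what appears in the reversed composition $p_l\circ p_k^\dagger$ and cancel, leaving the diagonal $\delta_k^l$ term. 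The surviving contribution is localized where the small diagonal $\Delta_{1\dots k\bullet}$ of the induction collides with the small diagonal of the restriction, which geometrically forces everything onto the diagonal $\Delta \subset S\times S$ recording the two ``extra'' copies of $S$ — this is what produces the factor $(\mathrm{Id}\boxtimes\Delta)_*$ in \eqref{eqn:heis surface}.

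The scalar attached to that diagonal term is where the surface genuinely enters, and I expect this to be the main obstacle. In the point case the corresponding coefficient is simply $k\,\delta_k^l$ (Theorem \ref{thm:classic}); over $S$ the self-intersection of the small diagonal contributes an excess class, and the pairing $\langle p_k, p_k\rangle$ computed along the diagonal picks up the normal-bundle factor whose $K$-theoretic Euler class is governed by $[\Omega^1_S]$, i.e. by $q_1,q_2$. Concretely I would compute $\mathrm{Hom}_{\fS(k)}(p_k \otimes \CO_\Delta, \ p_k \otimes \CO_\Delta)$ along the relevant diagonal inside $S^k\times S$, which reproduces the combinatorial factor $k$ from the $k$-cycle (as in \eqref{eqn:pairing}) together with the $\frac{(1-q_1^k)(1-q_2^k)}{(1-q_1)(1-q_2)}$ coming from the one remaining diagonal self-intersection after the overall $(\mathrm{Id}\boxtimes\Delta)_*$ has been factored out. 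The cleanest way to pin the exponents is to compare with the $S=\BA^2$ computation of Proposition \ref{prop:classic plane}: there $\Delta^*\Delta_*=(1-q_1)(1-q_2)$, so the affine relation \eqref{eqn:classic heis} carries the factor $(1-q_1^k)(1-q_2^k)$, and stripping off one diagonal pushforward $(\mathrm{Id}\boxtimes\Delta)_*$ divides by $(1-q_1)(1-q_2)$, yielding exactly the reduced coefficient $k\,\delta_k^l\,\frac{(1-q_1^k)(1-q_2^k)}{(1-q_1)(1-q_2)}$ appearing in \eqref{eqn:heis surface}. The remaining care is purely in checking that all the pull-backs $\rho^*,\pi^*$ are placed on the correct factors, which is routine once the diagonal matching above has been identified.
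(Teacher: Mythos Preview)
Your overall architecture matches the paper's proof: expand $p_k^\dagger p_l(M)$ via Mackey, match the ``disjoint'' piece with $p_l p_k^\dagger(M)$, and identify the leftover as a diagonal-supported term carrying a normal-bundle contribution. But there is a genuine gap in your double-coset step. The double cosets $\fS(k)\backslash \fS(n+l)/(\fS(l)\times\fS(n))$ are indexed by the size $i\in\{0,1,\dots,\min(k,l)\}$ of the overlap between the $k$ restricted positions and the $l$ induced positions, not just by the two extremes $i=0$ and $i=k=l$. The paper kills the intermediate terms $0<i<l$ by invoking the representation-theoretic vanishing
\[
p_l\big|_{\fS(i)\times\fS(l-i)} = 0 \qquad (0<i<l),
\]
i.e.\ the restriction of the virtual character $p_l$ to any proper Young subgroup is zero (this is equation \eqref{eqn:restriction zero} in the paper, and is applied once more with $k\leftrightarrow l$ to force $k=l$ in the surviving term). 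Without this input your Mackey computation does not collapse to two terms, and the argument does not close. You should name and use this vanishing explicitly.

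On the scalar: your plan to read off $k\,\frac{(1-q_1^k)(1-q_2^k)}{(1-q_1)(1-q_2)}$ by comparison with the $\BA^2$ case is plausible but a step removed from a proof --- you would still need to argue that the surviving self-intersection class is a \emph{universal} symmetric polynomial in the Chern roots of $\Omega_S^1$, so that the equivariant $\BA^2$ computation determines it. The paper instead computes directly: the self-restriction of the small diagonal $\Delta_{1\dots k}\subset S^k$ contributes $\wedge^\bullet(N^*_{S|S^k})$, and one evaluates $\mathrm{Tr}_{\wedge^\bullet(N^*_{S|S^k})}(\omega_k)$ by hand (splitting $\Omega_S^1$ into line bundles and counting $\omega_k$-fixed wedge monomials), obtaining exactly $\frac{(1-q_1^k)(1-q_2^k)}{(1-q_1)(1-q_2)}$; the factor $k$ is $\mathrm{Tr}_{p_k}(\omega_k)$. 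Either route works, but yours needs the universality remark to be complete.
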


\begin{remark}
	
Proposition \ref{prop:classic surface} actually holds for an arbitrary smooth variety $S$, although we will only need the surface case. If $S$ has dimension $d$, then the constant in the square brackets in the right-hand side of \eqref{eqn:heis surface} must be replaced by:
$$
\delta_k^l k \frac {(1-q_1^k)(1-q_2^k)...(1-q_d^k)}{(1-q_1)(1-q_2)...(1-q_d)}
$$
where $q_1+q_2+...+q_d = [\Omega_S^1] \in K(S)$. The proof below applies to arbitrary $d$. \\

\end{remark}

\begin{proof} Recall that elements of $K(S^n)$ represent vector bundles $M$ on $S^n$. Given any permutation $\sigma = \{1,...,n\} \rightarrow \{a_1,...,a_n\}$, we will use the notation $M_{a_1...a_n}$ for the vector bundle $\sigma^*(M)$ on $S^n$ and the associated $K$--theory class. Similarly, elements of $K_{\fS(n)}(S^n)$ represent $\fS(n)$--equivariant vector bundles, i.e. vector bundles $M$ on $S^n$ endowed with isomorphisms $M \cong \sigma^*(M)$ for all $\sigma \in \fS(n)$ that respect the product of permutations. In this language, formula \eqref{eqn:ind surf} reads:
\begin{equation}
\label{eqn:zero}
p_l(M) = \bigoplus^{(l,n)}_{\text{shuffles}} [p_l \otimes \CO_{\Delta_{a_1...a_l\bullet}}] \boxtimes M_{b_1....b_n} 
\end{equation}
where the right-hand side is a vector bundle on $S^{n+l} \times S$ (the index $\bullet$ represents the last factor of $S$) with the action of $\fS(n+l)$ given by permutation of the indices. The term ``$(l,n)$--shuffles" above and henceforth refers to the set of all partitions:
\begin{equation}
\label{eqn:shuffle}
\{a_1<...<a_l\} \sqcup \{b_1<...<b_n\} =\{1,...,n+l\}
\end{equation}
Iterating \eqref{eqn:zero} twice implies that:
\begin{equation}
\label{eqn:dodi}
p_k p_l(M) = \bigoplus^{(k,l,n)}_{\text{shuffles}} [p_k \otimes \CO_{\Delta_{a_1...a_k\circ}} ] \otimes [p_l \otimes \CO_{\Delta_{b_1...b_l\bullet}}] \boxtimes M_{c_1....c_n} 
\end{equation}
where $(k,l,n)$--shuffles are partitions of $\{1,...,n+k+l\}$ into three sets, of sizes $k$, $l$ and $n$, respectively. The right-hand side of \eqref{eqn:dodi} is a vector bundle on $S^{n+k+l} \times S \times S$, where the latter two factors of $S$ are indexed by the symbols $\circ$ and $\bullet$, respectively. It is clear that the right-hand side of \eqref{eqn:dodi} is symmetric under permuting $k \leftrightarrow l$, if we also permute $\circ \leftrightarrow \bullet$. This implies $[p_k,p_l] = 0$, and the statement that $[p_k^\dagger, p_l^\dagger] = 0$ is analogous. As for the commutator \eqref{eqn:heis surface}, we note that \eqref{eqn:res surf} implies:
\begin{equation}
\label{eqn:unu}
p_k^\dagger p_l(M) = \Hom_{\fS(k)} \left(p_k, \bigoplus^{(l,n)}_{\text{shuffles}} [p_l \otimes \CO_{\Delta_{a_1...a_l\bullet}}] \boxtimes M_{b_1....b_n} \Big|_{\Delta_{1...k\circ} \times S^{n+l-k}} \right)
\end{equation}
as a vector bundle on $S^{n+l-k} \times S \times S$ (the indices $\bullet$ and $\circ$ represent the two latter factors of $S$) with the action of $\fS(n+l-k)$ given by permutation of the indices $>k$. As a virtual representation of $\fS(l)$, the power sum $p_l$ has the property that:
\begin{equation}
\label{eqn:restriction zero}
p_l \Big|_{\fS(i) \times \fS(l-i)} = 0
\end{equation}
for all $i \in \{1,...,l-1\}$. We will call any shuffle as in \eqref{eqn:shuffle} of ``type $i$" if:
$$
\{a_1,...,a_i\} \sqcup \{b_1,...,b_{k-i}\} = \{1,...,k\}
$$
Because of \eqref{eqn:restriction zero}, the only shuffles which contribute non-trivially to \eqref{eqn:unu} are those of type $0$ and type $l$. The shuffles of type $0$ correspond to the case when $\{1,...,k\} \subset \{b_1,...,b_n\}$, and their contribution to \eqref{eqn:unu} may be identified with:
\begin{multline}
\label{eqn:doi}
p_l p_k^\dagger(M) = \bigoplus^{\text{type 0}}_{(l,n)\text{--shuffles}} [p_l \otimes \CO_{\Delta_{a_1...a_l\bullet}}] \boxtimes \\ \boxtimes \Hom_{\fS(k)} \left(p_k, M_{b_1....b_n} \Big|_{\Delta_{1...k\circ} \times S^{n-k}} \right)
\end{multline}
Therefore, the difference between \eqref{eqn:unu} and \eqref{eqn:doi} consists precisely of the sum over type $l$ shuffles, i.e. those such that $\{a_1,...,a_l\} \subset \{1,...,k\}$. However, if $k>l$, then the $\text{Hom}_{\fS(k)}(p_k,...)$ space in \eqref{eqn:unu} vanishes because of \eqref{eqn:restriction zero} for $k \leftrightarrow l$. Therefore, the only shuffle which has a non-zero contribution to the difference of \eqref{eqn:unu} and \eqref{eqn:doi} is the one corresponding to $\{a_1,...,a_l\} = \{1,...,k\}$. We thus conclude that:
\begin{equation}
\label{eqn:trei}
[p_k^\dagger, p_l](M) = M \boxtimes \delta_k^l \Hom_{\fS(k)} \left(p_k, p_k \otimes \CO_{\Delta_{1...k\bullet}} \Big|_{\Delta_{1...k\circ}}\right) 
\end{equation} 
In $K$--theory, the restriction of a regular subvariety (in the situation above, the small diagonal $\Delta_{1...k} : S \hookrightarrow S^k$) to itself is equal to the exterior algebra of the conormal bundle to this subvariety. In the situation of \eqref{eqn:trei}, this leads to:
\begin{equation}
\label{eqn:patru}
[p_k^\dagger, p_l](M) = M \boxtimes \Delta_* \left[ \delta_k^l \Hom_{\fS(k)} \left(p_k, p_k \otimes \wedge^\bullet(N^*_{S|S^k}) \right) \right]  
\end{equation} 
where $\Delta \hookrightarrow S \times S$ is the diagonal that identifies the points $\bullet$ and $\circ$. Recall that: 
$$
\Hom_{\fS(k)}(p_k,V) = \text{Tr}_V(\omega_k)
$$
where $\omega_k \in \fS(k)$ is a $k$--cycle. Therefore, \eqref{eqn:patru} implies \eqref{eqn:heis surface} because of the well-known fact that $\text{Tr}_{p_k}(\omega_k) = k$ and the claim below: \\

\begin{claim}

If $N_{S|S^k}$ denotes the normal bundle of the small diagonal in $S^k$, then: 
$$
\emph{Tr}_{\wedge^\bullet(N^*_{S|S^k})}(\omega_k) = \frac {(1-q_1^k)(1-q_2^k)}{(1-q_1)(1-q_2)} 
$$
where $q_1+q_2 = [\Omega_S^1]$. \\

\end{claim}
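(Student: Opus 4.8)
The plan is to compute the $\omega_k$-equivariant class $\wedge^\bullet(N^*_{S|S^k})$ by first pinning down the $\fS(k)$-equivariant structure of the normal bundle and then applying the splitting principle. First I would identify the normal bundle itself. Restricting $T_{S^k} = \bigoplus_{i=1}^k \text{pr}_i^* T_S$ to the small diagonal $\Delta_{1...k} : S \hookrightarrow S^k$ gives $T_{S^k}|_\Delta \cong T_S \otimes_\BC \BC^k$, where $\BC^k$ carries the permutation representation of $\fS(k)$ (the $i$-th summand coming from the $i$-th projection, all of which agree on the diagonal). The tangent bundle of the diagonal is the invariant subbundle $T_S \otimes \langle(1,\dots,1)\rangle$, so that $N_{S|S^k} \cong T_S \otimes V$ and dually $N^*_{S|S^k} \cong \Omega_S^1 \otimes V^*$, where $V = \BC^k/\BC$ is the standard (reduced permutation) representation of $\fS(k)$.

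Next I would diagonalize the $k$-cycle. Since $\omega_k$ has order $k$ it acts semisimply, and as $V^*$ is the nontrivial part of the permutation representation, the eigenvalues of $\omega_k$ on $V^*$ are exactly the nontrivial $k$-th roots of unity $\zeta, \zeta^2, \dots, \zeta^{k-1}$, with $\zeta = e^{2\pi i/k}$. Because these eigenvalues are constant over $S$, this produces an $\omega_k$-equivariant splitting
$$N^*_{S|S^k} \cong \bigoplus_{j=1}^{k-1} \Omega_S^1 \otimes L_j,$$
where $L_j$ is a trivial line bundle on which $\omega_k$ acts by the scalar $\zeta^j$.

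Then I would assemble the trace. Since $\wedge^\bullet$ is multiplicative (in the sense of \eqref{eqn:complete}--\eqref{eqn:elementary}) and $\text{Tr}_{(-)}(\omega_k)$ respects this multiplicativity, the splitting gives $\text{Tr}_{\wedge^\bullet(N^*_{S|S^k})}(\omega_k) = \prod_{j=1}^{k-1} \text{Tr}_{\wedge^\bullet(\Omega_S^1 \otimes L_j)}(\omega_k)$. For each $j$, the rank-two bundle $\Omega_S^1 \otimes L_j$ has Chern roots $q_1,q_2$ (those of $\Omega_S^1$) while $\omega_k$ acts by $\zeta^j$, so its $\omega_k$-equivariant Chern roots are $\zeta^j q_1$ and $\zeta^j q_2$, and by the definition of $\wedge^\bullet$ the corresponding factor is $(1-\zeta^j q_1)(1-\zeta^j q_2)$. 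Multiplying over $j$ and separating the $q_1$ and $q_2$ contributions, I would finish with the cyclotomic identity $\prod_{j=0}^{k-1}(1-\zeta^j x) = 1 - x^k$, which upon dividing out the $j=0$ factor $(1-x)$ gives $\prod_{j=1}^{k-1}(1-\zeta^j x) = (1-x^k)/(1-x)$. Applying this with $x = q_1$ and $x = q_2$ yields exactly $\frac{(1-q_1^k)(1-q_2^k)}{(1-q_1)(1-q_2)}$, as claimed.

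The main obstacle — really the only substantive point — is bookkeeping of the $\fS(k)$-equivariant structure: one must check that the eigenvalue decomposition of $\omega_k$ on $V^*$ descends to a genuine splitting of equivariant bundles over $S$ (which holds precisely because $\omega_k$ acts by locally constant scalars), so that the equivariant Chern roots of $N^*_{S|S^k}$ really are $\{\zeta^j q_1, \zeta^j q_2\}_{1 \le j \le k-1}$. One should also confirm the sign convention $\wedge^\bullet(E) = \sum_i (-1)^i [\wedge^i E]$ fixed in Subsection \ref{sub:pleth not}, so that a line class $\ell$ contributes the factor $(1-\ell)$ rather than $(1+\ell)$; this is exactly what makes the cyclotomic product collapse to $1 - x^k$.
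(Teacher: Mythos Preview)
Your proof is correct and arrives at the result by a somewhat different route than the paper. Both arguments begin with the identification $N^*_{S|S^k} \cong \Omega_S^1 \otimes V^*$ coming from the short exact sequence $0 \to TS \to (TS)^{\oplus k} \to N_{S|S^k} \to 0$. From there the paper works combinatorially: it invokes the splitting principle to write $\Omega_S^1 = \CL_1 \oplus \CL_2$, writes down an explicit monomial basis \eqref{eqn:wedges} for $\wedge^\bullet(\CL_1^{\oplus k} \oplus \CL_2^{\oplus k})$, observes that the $k$-cycle $\omega_k$ permutes these monomials, and finds exactly four fixed basis vectors contributing $1,-q_1^k,-q_2^k,q^k$; dividing by the trivial factor $(1-q_1)(1-q_2)$ gives the result. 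You instead work spectrally: you diagonalize $\omega_k$ on $V^*$ into the one-dimensional eigenspaces $L_j$ with eigenvalues the nontrivial $k$-th roots of unity, use multiplicativity of $\wedge^\bullet$ and of the trace to get $\prod_{j=1}^{k-1}(1-\zeta^j q_1)(1-\zeta^j q_2)$, and then collapse the product via the cyclotomic identity $\prod_{j=0}^{k-1}(1-\zeta^j x)=1-x^k$. Your approach is arguably cleaner and makes the appearance of $(1-q_a^k)/(1-q_a)$ transparent, at the cost of introducing roots of unity; the paper's fixed-basis count is more elementary and works without passing to a splitting field, but is slightly more ad hoc. Both are standard and complete.
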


\noindent The normal bundle arises from the short exact sequence:
$$
0 \longrightarrow TS \xrightarrow{\text{diag}} (TS)^{\oplus k} \longrightarrow N_{S|S^k} \longrightarrow 0
$$
where the $\fS(k)$--action permutes the factors of $TS$. Therefore, we have:
$$
\wedge^\bullet(N^*_{S|S^k}) = \frac {\wedge^\bullet \left( (\Omega_S^1)^{\oplus k} \right)}{\wedge^\bullet (\Omega_S^1)}
$$
Since the denominator of the expression above is a trivial $\fS(k)$--module with $K$--theory class $(1-q_1)(1-q_2)$, it remains to show that:
\begin{equation}
\label{eqn:cinci}
\text{Tr}_{\wedge^\bullet \left( (T^*S)^{\oplus k} \right)}(\omega_k) = (1-q_1^k)(1-q_2^k)
\end{equation}
By the splitting principle, we can assume $\Omega_S^1 \cong \CL_1 \oplus \CL_2$, where $\CL_1$ and $\CL_2$ are line bundles with $K$--theory classes $q_1$ and $q_2$. If we let $l_a$ denote a local section of the line bundle $\CL_a$, then a basis for the local sections of $\wedge^\bullet(\CL_1^{\oplus k} \oplus \CL_2^{\oplus k})$ consists of:
\begin{equation}
\label{eqn:wedges}
l_1^{(i_1)} \wedge l_1^{(i_2)} \wedge ... \wedge l_1^{(i_a)} \wedge l_2^{(j_1)} \wedge l_2^{(j_2)} \wedge ... \wedge l_2^{(j_b)}
\end{equation}
where $l_a^{(i)}$ denotes the section $l_a$ on the $i$--th copy of $\CL_a$ inside $\CL_a^{\oplus k}$. The cycle $\omega_k$ acts on the basis \eqref{eqn:wedges} by increasing the indices $i_a, j_b \in \BZ/n\BZ$ by 1. Therefore, there are only 4 basis elements which are unchanged by $\omega_k$, namely the cases $(a,b) = (0,0)$, $(k,0)$, $(0,k)$, $(k,k)$ of \eqref{eqn:wedges}. These 4 basis elements contribute precisely $1$, $-q_1^k$, $-q_2^k$, $q^k$ (respectively) to the trace, thus implying \eqref{eqn:cinci}.  
	
\end{proof}

\subsection{} By analogy with Subsection \ref{sub:plethysm}, we have:
\begin{align} 
&\sum_{k=0}^\infty \frac {h_k}{z^k} = \exp \left[ \sum_{k=1}^\infty \frac {p_k}{k z^k} \right] \Big|_\Delta \label{eqn:complete surface} \\ 
&\sum_{k=0}^\infty \frac {e_k}{(-z)^k} = \exp \left[- \sum_{k=1}^\infty \frac {p_k}{k z^k} \right] \Big|_\Delta \label{eqn:elementary surface}
\end{align}
where $h_k,e_k$ are operators defined by analogy with \eqref{eqn:ind surf}, specifically:
\begin{align*}
h_k(M) = \text{Ind}_{\fS(k) \times \fS(l)}^{\fS(k+l)} \Big([\text{triv}_{\fS(k)} \otimes  \CO_{\Delta_{1...k\bullet}}] \boxtimes M \Big) \\
e_k(M) = \text{Ind}_{\fS(k) \times \fS(l)}^{\fS(k+l)} \Big([\text{sign}_{\fS(k)} \otimes  \CO_{\Delta_{1...k\bullet}}] \boxtimes M \Big)
\end{align*} 
for all $M \in K_{\fS(l)}(S^l)$. Moreover, we consider:
\begin{align} 
&\sum_{k=0}^\infty h_k^\dagger z^k = \exp \left[ \sum_{k=1}^\infty \frac {p^\dagger_k z^k}{k} \right] \Big|_\Delta \label{eqn:complete surface adj} \\ 
&\sum_{k=0}^\infty e_k^\dagger (-z)^k = \exp \left[- \sum_{k=1}^\infty \frac {p^\dagger_k z^k}{k} \right] \Big|_\Delta \label{eqn:elementary surface adj}
\end{align}
which by analogy with \eqref{eqn:res surf} satisfy:
\begin{align} 
&h_k^\dagger(M) = \text{Hom}_{\fS(k)}\left(\text{triv}_{\fS(k)}, \text{Res}_{\fS(k) \times \fS(l)}^{\fS(k+l)} (M) \Big|_{\Delta_{1...k \bullet \times S^l}} \right)  \label{eqn:triv surf} \\
&e_k^\dagger(M) = \text{Hom}_{\fS(k)}\left(\text{sign}_{\fS(k)},  \text{Res}_{\fS(k) \times \fS(l)}^{\fS(k+l)} (M) \Big|_{\Delta_{1...k \bullet \times S^l}} \right) \label{eqn:sign surf}
\end{align} 
for all $M \in K_{\fS(k+l)}(S^{k+l})$. \\

\begin{remark}
	
In \cite{K}, a similar result to (a categorification of) Proposition \ref{prop:classic surface} was proved by using certain operators $\LS \rightarrow \LS$ indexed by classes $\gamma \in K(S)$. While similar in overall shape with our:
$$
e_k^{(\gamma)} : \LS \xrightarrow{e_k} \LSS \xrightarrow{\emph{Id}_{\LS} \boxtimes \gamma} \LSS \xrightarrow{\pi_*} \LS
$$
and their adjoints, the operators of \loccit are not linear in $\gamma$. Linearity is necessary in order for operators indexed by $\gamma \in K(S)$ to ``glue" to an operator:
$$
\LS \rightarrow \LSS
$$
which is required for the framework of Definitions \ref{def:weak} and \ref{def:strong}. \\
	
\end{remark}

\subsection{} We have the following global analogue of the rational function \eqref{eqn:zeta}:
\begin{equation}
\label{eqn:zeta s}
\zeta^S(x) = \wedge^\bullet (-x \cdot \CO_\Delta) \in K(S \times S)(x)
\end{equation}
whose restriction to the diagonal is precisely:
$$
\wedge^\bullet \Big(-x \cdot \left([\CO_S]-[\Omega_S^1]+[\CK_S]\right) \Big) = \frac {(1-xq_1)(1-xq_2)}{(1-x)(1-xq)} = \zeta(x)
$$
where $[\Omega_S^1] = q_1+q_2 \in K(S)$. We have the following analogue of Theorem \ref{thm:action functions}: \\

\begin{theorem}
\label{thm:action functions surface} 

There is a strong action $\CA \stackrel{\Psi}\curvearrowright_S \LS$ given by:
\begin{equation}
\label{eqn:action up 1 surface}
c_1 \mapsto 1, \qquad c_2 \mapsto q^{-1},
\end{equation}
\begin{equation}
\label{eqn:action up 2 surface}
P_{0,m} \ \mapsto p_m, \qquad P_{0,-m} \mapsto - m q^m \cdot p_m^\dagger, 
\end{equation}
while for all $n > 0$ and $m \in \BZ$, we have:
$$
H_{n,m} \mapsto \int_{|z_1| \gg ... \gg |z_n|} \frac {\prod_{i=1}^n z_i^{\left \lfloor \frac {mi}n \right \rfloor - \left \lfloor \frac {m(i-1)}n \right \rfloor}}{\prod_{i=1}^{n-1} \left(1 - \frac {z_{i+1}q}{z_i} \right) \prod_{i<j} \zeta \left( \frac {z_j}{z_i} \right)}  
$$
\begin{equation}
\label{eqn:action left surface}
\exp \left[\sum_{k=1}^\infty \frac {z_1^{-k}+...+z_n^{-k}}k \cdot p_k \right] \exp \left[-\sum_{k=1}^\infty \frac {z_1^k+...+z_n^k}k \cdot p_k^\dagger \right] \Big|_\Delta \prod_{a=1}^n  \frac {dz_a}{2\pi i z_a} 
\end{equation}
and:
$$
H_{-n,m} \mapsto \int_{|z_1| \ll ... \ll |z_n|} \frac {(-q)^n  \prod_{i=1}^n z_i^{\left \lfloor \frac {mi}n \right \rfloor - \left \lfloor \frac {m(i-1)}n \right \rfloor}}{\prod_{i=1}^{n-1} \left(1 - \frac {z_{i+1}q}{z_i} \right) \prod_{i<j} \zeta \left( \frac {z_j}{z_i} \right)}  
$$
\begin{equation}
\label{eqn:action right surface}
\exp \left[-\sum_{k=1}^\infty \frac {z_1^{-k}+...+z_n^{-k}}{k \cdot q^k} \cdot p_k \right] \exp \left[\sum_{k=1}^\infty \frac {z_1^k+...+z_n^k}k \cdot p_k^\dagger \right] \Big|_\Delta \prod_{a=1}^n  \frac {dz_a}{2\pi i z_a} 
\end{equation}
(see the last sentence of Theorem \ref{thm:action functions} for the meaning of the contours). \\

\end{theorem}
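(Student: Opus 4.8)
The plan is to mirror the proof of Theorem \ref{thm:action functions}, with Proposition \ref{prop:classic surface} playing the role that Proposition \ref{prop:classic plane} played in the affine case, and with every statement now interpreted in the strong-action framework of Definitions \ref{def:weak} and \ref{def:strong}. As in \loccit, I would invoke the triangular decomposition $\CA = \CA^+ \otimes \CA^0 \otimes \CA^-$ together with the main result of \cite{S} to reduce the theorem to two assertions: (i) formulas \eqref{eqn:action left surface} and \eqref{eqn:action right surface} restrict to strong actions of the half-subalgebras $\CA^+$ and $\CA^-$ on $\LS$; and (ii) the low-rank relations \eqref{eqn:need 1}--\eqref{eqn:need 5}, with all commutators read as \emph{reduced} commutators \eqref{eqn:reduced} landing on the diagonal, hold for the operators \eqref{eqn:action up 2 surface}, \eqref{eqn:action left surface}, \eqref{eqn:action right surface} specialized to $n,n' \in \{-1,0,1\}$.

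For step (ii), I would carry out the same computations as in the affine case, but feeding in the surface Heisenberg commutator \eqref{eqn:heis surface} rather than \eqref{eqn:classic heis}. The crucial point is the bookkeeping: each operator $p_k, p_k^\dagger$ now produces an auxiliary factor of $S$, and the strong-action composition \eqref{eqn:hom} restricts adjacent such factors to the diagonal. Because \eqref{eqn:heis surface} already exhibits $[p_k^\dagger, p_l]$ as $(\emph{Id} \boxtimes \Delta)_*$ of an explicit class, all the resulting commutators are automatically supported on $\Delta$, and matching the reduced commutator against $\frac{[a,b]}{(1-q_1)(1-q_2)}$ reduces to the scalar identities already verified on the affine plane. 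I expect this step to be routine once the diagonal bookkeeping is in place.

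The heart of the argument is step (i), and here the new ingredient is the global factor $\zeta^S$ of \eqref{eqn:zeta s}. Recall from \cite{Shuf} that $H_{-n,m} \mapsto R_{n,m}$ realizes $\CA^-$ inside the shuffle algebra $(\CS, *)$; the task is to promote the assignment $R_{n,m} \mapsto (\text{RHS of \eqref{eqn:action right surface}})$ to a homomorphism from $(\CS,*)$ into operators $\LS \to \LSS$ under the composition \eqref{eqn:hom}. The computation is the surface analogue of \eqref{eqn:jp}--\eqref{eqn:lp}: when one composes two such integrals, the two exponential expressions must be commuted past one another, and by \eqref{eqn:heis surface} this produces not the scalar $\zeta(z_j/z_i)^{-1}$ of the affine case but the class $\zeta^S(z_j/z_i)^{-1} \in K(S \times S)$ linking the two auxiliary $S$-factors. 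Applying the final restriction $|_\Delta$ sends $\zeta^S \mapsto \zeta$ and recovers exactly the shuffle product $R_{n,m} * R_{n',m'}$, so the homomorphism property follows from that of the affine case. The action of $\CA^+$ is handled symmetrically using \eqref{eqn:action left surface}, and the symmetrization step (moving contours to equal radius) goes through verbatim.

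The main obstacle I anticipate is controlling the \emph{off-diagonal} part of the composition, which is precisely the extra content of a strong (as opposed to weak) action. In the affine setting every sheaf was supported at the origin, so $\zeta^S$ collapsed to the scalar $\zeta$ and the distinction was invisible; on a general surface one must verify that, \emph{before} restricting to the diagonal, the difference of the two orderings in \eqref{eqn:diff 1}--\eqref{eqn:diff 2} for two of the operators \eqref{eqn:action left surface}--\eqref{eqn:action right surface} is genuinely supported on $\Delta \subset S \times S$. This hinges on the fact that $\zeta^S(x) = \wedge^\bullet(-x\,\CO_\Delta)$ differs from $1$ only by diagonal-supported classes, so that this difference equals $(\emph{Id} \boxtimes \Delta)_*$ of a well-defined reduced commutator, matching \eqref{eqn:comm phi}. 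Establishing this cleanly at the level of $S \times S$—rather than merely after diagonal restriction—is the crux of upgrading Theorem \ref{thm:action functions} to Theorem \ref{thm:action functions surface}.
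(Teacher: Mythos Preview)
Your plan is essentially the paper's proof, with the same key ingredients: Proposition \ref{prop:classic surface} as the surface Heisenberg input, the shuffle-algebra argument for the half-subalgebras, the surface commutation factor $\zeta^S$ of \eqref{eqn:zeta s}, and the observation (formula \eqref{eqn:zeta expand} in the paper) that $\zeta^S(x)^{-1}-1$ is a multiple of $[\CO_\Delta]$, which forces the same-sign commutators to be diagonally supported.

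One point deserves more care. You invoke the triangular decomposition and \cite{S} to reduce the \emph{strong} action to your (i) and (ii), but the Drinfeld-realization result of \cite{S} is an algebra presentation: it reduces a \emph{weak} action to checking relations on generators. The paper accordingly establishes the weak action first, and then proves \eqref{eqn:comm phi} separately. For same-sign pairs $H_{n,m}, H_{n',m'}$ this is your $\zeta^S$ argument (your step (i)); but for opposite-sign pairs with $|n|+|n'|>2$ the paper runs an explicit induction on $|n|+|n'|$, writing $H_{n,m}$ (say $n\geq 2$) as $c^{-1}[H_{u,v},H_{u',v'}]_{\text{red}}$ plus lower terms and then applying the Leibniz rule \eqref{eqn:leibniz} and Jacobi identity \eqref{eqn:jacobi} from Remark \ref{rem:leibniz jacobi} together with the induction hypothesis. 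Your step (ii) supplies exactly the base cases of this induction, but the inductive step is not a consequence of \cite{S} alone; you should spell out the Leibniz/Jacobi bootstrap explicitly rather than folding it into the appeal to \cite{S}.
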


\begin{proof} Let us first show that the assignments \eqref{eqn:action up 1 surface}--\eqref{eqn:action right surface} give rise to a weak action. As shown in \cite{Hecke}, this can be achieved by establishing that the two bullets in the proof of Theorem \ref{thm:action functions} hold. The first bullet is proved almost word-for-word as in the aforementioned Theorem, with the minor modification that the parameters $q_1$ and $q_2$ are now identified with the Chern roots of $\Omega_S^1$. As for the second bullet, it is an immediate consequence of the following analogues of \eqref{eqn:need 1}--\eqref{eqn:need 5}:
\begin{align}
&\Big[ \Psi(P_{0,\pm m}), \Psi(P_{0,\pm m'}) \Big] = 0 \label{eqn:need new 1} \\
&\Big[ \Psi(P_{0,m}), \Psi(P_{0,-m'})\Big] = \Delta_* \left( \rho^*\left[\delta_{m'}^m m  \frac {(1-q_1^m)(1-q_2^m)}{(1-q_1)(1-q_2)} \right] \pi^* \right) \label{eqn:need new 2} \\
&\Big[\Psi(H_{\pm 1,k}), \Psi(P_{0,\pm m})\Big] = \Delta_* \left(- \rho^*\left[ \frac {(1-q_1^m)(1-q_2^m)}{(1-q_1)(1-q_2)} \right] \Psi(H_{\pm 1,k\pm m}) \right) \label{eqn:need new 3} \\
&\Big[\Psi(H_{\pm 1,k}), \Psi(P_{0,\mp m}) \Big] = \Delta_* \left(\rho^*\left[ \frac {(1-q_1^m)(1-q_2^m)q^{m\delta_\pm^+}}{(1-q_1)(1-q_2)} \right] \Psi(H_{\pm 1,k \mp m}) \right) \label{eqn:need new 4} \\
&\Big[\Psi(H_{1,k}), \Psi(H_{-1,k'})\Big] = \Delta_* \left( \frac 1{q^{-1}-1} \begin{cases} \Psi(A_{k+k'}) &\text{if } k+k' > 0 \\ 
\rho^*(1-q^k)\pi^* &\text{if } k+k' = 0 \\ - \Psi(q^k B_{-k-k'})
&\text{if } k+k' < 0 \end{cases} \quad \right) \label{eqn:need new 5}
\end{align}	
(in the context of a weak action, we only need the restriction of formulas \eqref{eqn:need new 1}--\eqref{eqn:need new 5} to the diagonal $\Delta \subset S \times S$) where $\pi : \CM^s \times S \rightarrow \CM^s$ and $\rho : \CM^s \times S \rightarrow S$ are the standard projections, and $A_m, B_m : \LS \rightarrow \LSS$ are defined by:
\begin{align*} 
&\sum_{m=0}^\infty \frac {A_m}{x^m} = \exp \left[ \sum_{m=1}^\infty \frac {p_m}{m x^m}(1-q^{-m}) \right] \Big|_\Delta \\
&\sum_{m=0}^\infty \frac {B_m}{x^m} = \exp \left[ \sum_{m=1}^\infty \frac {p^\dagger_m}{m x^m}(1-q^m) \right] \Big|_\Delta
\end{align*}
Formulas \eqref{eqn:need new 1}--\eqref{eqn:need new 5} are equalities of homomorphisms $K(\CM^s) \rightarrow K(\CM^s \times S \times S)$, which are straightforward consequences of Proposition \ref{prop:classic surface} (in fact, the first two of these formulas are trivial). Therefore, let us prove \eqref{eqn:need new 5} as an illustration, and leave the remaining formulas as exercises to the interested reader. We have:
$$
\Psi(H_{1,k}) = \sum^{\lambda,\mu \text{ partitions}}_{|\lambda| - |\mu| = k} \frac {(-1)^{|\mu|}}{z_\lambda z_\mu} p_\lambda p_\mu^\dagger \Big|_\Delta
$$
$$
\Psi(H_{-1,k'}) = \sum^{\lambda',\mu' \text{ partitions}}_{|\lambda'| - |\mu'| = k'} \frac {(-q)^{1-|\lambda'|}}{z_{\lambda'} z_{\mu'}} p_{\lambda'} p_{\mu'}^\dagger \Big|_\Delta
$$
Therefore, we have: 
$$
[\Psi(H_{1,k}),\Psi(H_{-1,k'})]_{\text{red}} = \sum^{\lambda,\mu,\lambda',\mu' \text{ partitions}}_{|\lambda| - |\mu| = k, |\lambda'|-|\mu'|=k'} \frac {(-1)^{|\mu|}(-q)^{1-|\lambda'|}}{z_\lambda z_\mu z_{\lambda'} z_{\mu'}} \left[ p_\lambda p_\mu^\dagger \Big|_\Delta, p_{\lambda'} p_{\mu'}^\dagger \Big|_\Delta \right]_{\text{red}}
$$
Formula \eqref{eqn:leibniz} allows us to compute the reduced commutator in the right-hand side. Specifically, this reduced commutator picks up a contribution from the pairing of any $k \in \lambda$ (respectively $l \in \mu$) and any $k \in \mu'$ (respectively $l \in \lambda'$), and this contribution is a scalar due to \eqref{eqn:heis surface}. Therefore, one can write the right-hand side of the expression above as a linear combination of expressions of the form:
$$
p_{\tilde{\lambda}} p_{\tilde{\mu}}^\dagger \Big|_\Delta 
$$
whose coefficients are symmetric Laurent polynomials in $q_1$ and $q_2$. However, the right-hand side of \eqref{eqn:need new 5} is also a linear combination of expressions of the same form. The fact that the two sides of \eqref{eqn:need new 5} are equal in the case of $S = \BA^2$ (when $q_1,q_2$ are formal parameters) implies that they are equal in the case of an arbitrary surface (when $q_1,q_2$ are specialized to the Chern roots of the cotangent bundle). \\

\noindent Now that we have showed that formulas \eqref{eqn:action up 1 surface}--\eqref{eqn:action right surface} give rise to a weak action $\CA \curvearrowright K(\CM^s)$, let us show that they in fact produce a strong action. This entails proving \eqref{eqn:comm phi} for all $x,y \in \CA$. Because of the Leibniz rule \eqref{eqn:leibniz}, it suffices to consider the case when $x,y$ range among the generators of $\CA$, namely the $H_{n,m}$'s. For illustration, let us start with the case $x = H_{-1,m}$ with $y = H_{-1,m'}$:
\begin{equation}
\label{eqn:twice}
\Phi(H_{-1,m}) = \int (-q) \exp \left[- \frac {p_k}{k z^k q^k} \right] \exp \left[\frac {p_k^\dagger z^k}k  \right] \Big|_\Delta  \frac {dz}{2\pi i z} 
\end{equation}
Below, we will consider two copies $S_1=S_2=S$ of the surface, and write:
$$
p_k^{(i)}, p_k^{\dagger, (i)} , \Phi^{(i)}(H_{-1,m}) : \LS \rightarrow \Lambda_S \times K(S_{i}) \quad \text{and} \quad q^{(i)} = [\CK_{S_i}] \in K(S_i)
$$
for each $i \in \{1,2\}$. By applying relation \eqref{eqn:twice} twice, we have:
\begin{equation}
\label{eqn:two operators}
\left(\Phi^{(1)}(H_{-1,m}) \boxtimes \text{Id}_{S_2} \right) \circ \Phi^{(2)}(H_{-1,m'}) = \int_{|z_1|\ll |z_2|} q^{(1)} q^{(2)} z_1^m z_2^{m'}
\end{equation}
$$ 
\exp \left[- \frac {p^{(1)}_k}{k z_1^kq^{(1)k}} \right] \exp \left[\frac {p_k^{\dagger,(1)} z_1^k}k  \right] \Big|_\Delta \exp \left[- \frac {p^{(2)}_k}{k z_2^kq^{(2)k}} \right] \exp \left[\frac {p_k^{\dagger,(2)} z_2^k}k  \right] \Big|_\Delta  \frac {dz_1}{2\pi i z_1} \frac {dz_2}{2\pi i z_2} 
$$
As a consequence of \eqref{eqn:heis surface}, we have the following analogue of \eqref{eqn:computation 2}:
\begin{multline*} 
\exp\left[\sum_{k=1}^\infty \frac {p^{\dagger,(1)}_k z_1^k}k \right] \exp\left[- \sum_{k=1}^\infty \frac {p_k^{(2)}}{k z_2^k q^{(2)k}} \right] = \\ = \exp\left[- \sum_{k=1}^\infty \frac {p_k^{(2)}}{k z_2^k q^{(2)k}} \right] \exp\left[\sum_{k=1}^\infty \frac {p^{\dagger,(1)}_k z_1^k}k \right]  \zeta^S \left( \frac {z_2}{z_1} \right)^{-1}
\end{multline*} 
as an equality of operators $\text{Hom}(\LS, \LS \times K(S_1 \times S_2))$, where $\zeta^S$ is the rational function \eqref{eqn:zeta s}. Therefore, formula \eqref{eqn:two operators} may be rewritten as:
\begin{equation}
\label{eqn:comp 1}
\left(\Phi^{(1)}(H_{-1,m}) \boxtimes \text{Id}_{S_2} \right) \circ \Phi^{(2)}(H_{-1,m'}) = \int_{|z_1|\ll |z_2|} q^{(1)} q^{(2)} z_1^m z_2^{m'} \zeta^S \left( \frac {z_2}{z_1}  \right)^{-1}
\end{equation}
$$
\exp \left[-\sum_{k=1}^\infty \left( \frac {p^{(1)}_k}{k z_1^kq^{(1)k}} + \frac {p^{(2)}_k}{k z_2^kq^{(2)k}} \right) \right] \exp \left[\sum_{k=1}^\infty \left( \frac {p_k^{\dagger,(1)} z_1^k}k + \frac {p_k^{\dagger,(2)} z_2^k}k \right) \right] \frac {dz_1}{2\pi i z_1} \frac {dz_2}{2\pi i z_2} 
$$
Similarly, we have:
\begin{equation}
\label{eqn:comp 2}
\left(\Phi^{(2)}(H_{-1,m'}) \boxtimes \text{Id}_{S_1} \right) \circ \Phi^{(1)}(H_{-1,m}) = \int_{|z_1|\gg |z_2|} q^{(1)} q^{(2)} z_1^m z_2^{m'} \zeta^S \left( \frac {z_1}{z_2} \right)^{-1}
\end{equation}
$$
\exp \left[-\sum_{k=1}^\infty \left( \frac {p^{(1)}_k}{k z_1^kq^{(1)k}} + \frac {p^{(2)}_k}{k z_2^kq^{(2)k}} \right) \right] \exp \left[\sum_{k=1}^\infty \left( \frac {p_k^{\dagger,(1)} z_1^k}k + \frac {p_k^{\dagger,(2)} z_2^k}k \right) \right] \frac {dz_1}{2\pi i z_1} \frac {dz_2}{2\pi i z_2} 
$$
However, Remark 3.17 of \cite{Shuf surf} gives us the following formula:
\begin{equation}
\label{eqn:zeta expand}
\zeta^S(x)^{-1} = 1 - [\CO_\Delta] \otimes \frac x{(1-xq_1)(1-xq_2)}
\end{equation} 
where $q_1$ and $q_2$ are the Chern roots of $\Omega_S^1$ on the diagonal inside $S \times S$. Expanding formula \eqref{eqn:zeta expand} in either positive or negative powers of $x$ shows that it is always equal to 1 times a multiple of $[\CO_\Delta]$. Therefore, the difference of \eqref{eqn:comp 1} and \eqref{eqn:comp 2} is a multiple of $[\CO_\Delta]$, and we conclude that:
\begin{equation}
\label{eqn:final 1}
\left[\Phi^{(1)}(H_{-1,m}), \Phi^{(2)}(H_{-1,m'}) \right] = (\text{Id}_{\LS} \boxtimes \Delta)_* (A)
\end{equation}
where $A$ is a certain difference of integrals of rational functions in $q_1$ and $q_2$, times the symmetric expression:
$$
\exp \left[-\sum_{k=1}^\infty \left( \frac {p_k}{k z_1^kq^k} + \frac {p_k}{k z_2^kq^k} \right) \right] \exp \left[\sum_{k=1}^\infty \left( \frac {p_k^{\dagger} z_1^k}k + \frac {p_k^{\dagger} z_2^k}k \right) \right] \Big|_\Delta  : \LS\rightarrow \LSS
$$
However, because of Theorem \ref{thm:action functions}, we have:
\begin{equation}
\label{eqn:final 2}
A = \Phi\left( \frac {[H_{-1,m}, H_{-1,m'}]}{(1-q_1)(1-q_2)} \right)
\end{equation}
in the $S = \BA^2$ case (when $q_1,q_2$ are formal parameters). Therefore, formula \eqref{eqn:final 2} also holds in the situation at hand (when $q_1,q_2$ are the Chern roots of $\Omega_S^1$). The generalization of the argument above to any $x = H_{n,m}$ and $y = H_{n',m'}$ where $n$ and $n'$ have the same sign is straightforward, and we refer the interested reader to the proof of ``Conjecture 5.7 subject to Assumption B" from \cite{W}. \\

\noindent Let us now show how to prove \eqref{eqn:comm phi} for $x = H_{n,m}$ and $y = H_{n',m'}$ where $n$ and $n'$ have opposite signs, and we will do so by induction on $|n|+|n'|$. The base cases of the induction are precisely \eqref{eqn:need new 1}--\eqref{eqn:need new 5}, so let us assume without loss of generality that $n \geq 2$. There exist $u,u',v,v'$ with $u+u' = n$, $v+v' = m$, $u,u' > 0$ such that:
$$
[H_{u,v}, H_{u',v'}] = (1-q_1)(1-q_2) \Big( c \cdot H_{n,m} + ... \Big)
$$
(see \cite{BS}) where the ellipsis denotes a sum of products of $H_{u'',v''}$ with $0 < u'' < n$, and $c$ is a product of expressions of the form $1+q+...+q^{d-1}$. Since $u$ and $u'$ have the same sign, the argument in the previous paragraph implies that:
$$
\Psi(H_{n,m}) = c^{-1} \Big[\Psi(H_{u,v}), \Psi(H_{u',v'})\Big]_{\text{red}} - c^{-1} \left( \Psi(...) \Big|_\Delta \right) 
$$
(see \eqref{eqn:reduced} for the definition of the reduced commutator). We note that $c$ is invertible in $K(S)$ because $q-1$ is nilpotent. Therefore, the Leibniz rule \eqref{eqn:leibniz} and the Jacobi identity \eqref{eqn:jacobi} imply that:
$$
\Big[\Psi(H_{n,m}), \Psi(H_{n',m'})\Big]_{\text{red}} = c^{-1} \Big[\Psi(H_{u,v}), \Big[\Psi(H_{u',v'}), \Psi(H_{n',m'})\Big]_{\text{red}}\Big]_{\text{red}} - 
$$
$$
- c^{-1} \Big[\Psi(H_{u',v'}), \Big[\Psi(H_{u,v}), \Psi(H_{n',m'})\Big]_{\text{red}}\Big]_{\text{red}} - c^{-1} \Big[\Psi(...)\Big|_\Delta, \Psi(H_{n',m'})\Big]_{\text{red}} 
$$
By the induction hypothesis, the right-hand side of the expression is equal to:
$$
c^{-1} \Psi \left(\frac {[H_{u,v}, [H_{u',v'},H_{n',m'}] - [H_{u',v'}, [H_{u,v},H_{n',m'}]}{(1-q_1)^2(1-q_2)^2} - \frac {[...,H_{n',m'}]}{(1-q_1)(1-q_2)} \right)
$$
The usual Leibniz rule and Jacobi identity in $\CA$ show that the expression above is
$$
\Psi \left( \frac {[H_{n,m}, H_{n',m'}]}{(1-q_1)(1-q_2)} \right)
$$
as required. 
	
\end{proof} 

\subsection{} Akin with Subsection \ref{sub:pleth not}, we will denote elements of $\LS$ by $f[X]$, and write:
\begin{equation}
\label{eqn:complete surf} 
\sum_{k=0}^\infty \frac {h_k}{z^k} = \wedge^\bullet \left( - \frac Xz \right)
\end{equation}
\begin{equation}
\label{eqn:elementary surf} 
\sum_{k=0}^\infty \frac {e_k}{(-z)^k} = \wedge^\bullet \left( \frac Xz \right)
\end{equation}
The following notion is analogous to Proposition \ref{prop:plethysm}: \\

\begin{definition}
\label{def:plethysm surface}
	
For any $f[X] \in \LS$ and any variable $z$, define:
\begin{equation}
\label{eqn:pleth surf}	
f \left[ X \pm \CO_{\Delta} z \right] = \exp \left[\pm \sum_{k=1}^\infty \frac {p_k^\dagger z^k}k \right] \Big|_\Delta \cdot f[X] 
\end{equation}	
as an element of $\LSS[z]$. \\
	
\end{definition} 

\noindent Using \eqref{eqn:complete surf}, \eqref{eqn:elementary surf}, \eqref{eqn:pleth surf}, we may rewrite formulas \eqref{eqn:action left surface} and \eqref{eqn:action right surface} as:
$$
\Psi(H_{n,m})(f[X]) = \int_{0, X \prec |z_n| \prec ... \prec |z_1| \prec \infty} \frac {\prod_{i=1}^n z_i^{\left \lfloor \frac {mi}n \right \rfloor - \left \lfloor \frac {m(i-1)}n \right \rfloor}}{\prod_{i=1}^{n-1} \left(1 - \frac {z_{i+1}q}{z_i} \right) \prod_{i<j} \zeta \left( \frac {z_j}{z_i} \right)}  
$$
\begin{equation}
\label{eqn:action left surf}
\wedge^\bullet \left( - \frac X{z_1} \right) ... \wedge^\bullet \left( - \frac X{z_n} \right) \cdot f \left[ X - \CO_\Delta \sum_{i=1}^n z_i \right] \prod_{a=1}^n  \frac {dz_a}{2\pi i z_a}
\end{equation}
and:
$$
\Psi(H_{-n,m})(f[X]) = \int_{0, X \prec |z_1| \prec ... \prec |z_n| \prec \infty} \frac {(-q)^n  \prod_{i=1}^n z_i^{\left \lfloor \frac {mi}n \right \rfloor - \left \lfloor \frac {m(i-1)}n \right \rfloor}}{\prod_{i=1}^{n-1} \left(1 - \frac {z_{i+1}q}{z_i} \right) \prod_{i<j} \zeta \left( \frac {z_j}{z_i} \right)}  
$$
\begin{equation}
\label{eqn:action right surf}
\wedge^\bullet \left( \frac X{z_1 q} \right) ... \wedge^\bullet \left( \frac X{z_n q} \right) \cdot f \left[ X + \CO_\Delta \sum_{i=1}^n z_i \right] \prod_{a=1}^n  \frac {dz_a}{2\pi i z_a}
\end{equation}
These formulas are proved by analogy with \eqref{eqn:action left} and \eqref{eqn:action right}. The contours of integration are explained in the last paragraph of Subsection \ref{sub:new plane}. \\

\subsection{}

We will now bridge Theorems \ref{thm:action surface} and \ref{thm:action functions surface}. To any element $f[X] \in K_{\fS(k)}(S^k)$ $\subset\LS$, we may associate \underline{universal classes} on $\CM^s$ via the construction \eqref{eqn:operator def surface}:
\begin{equation}
\label{eqn:universal}
f[X] \stackrel{\GS}\leadsto f[\CU] := \pi_* \Big( \CU_1 \otimes ... \otimes \CU_k \otimes \rho^*(f[X]) \Big)^{\fS(k)}
\end{equation}
where $\CU$ is the universal sheaf on $\CM^s \times S$, $\CU_i$ denotes its pull-back to $\CM^s \times S^k$ via the $i$--th projection map $S^k \rightarrow S$, and $\pi : \CM^s \times S^k \rightarrow \CM^s$, $\rho : \CM^s \times S^k \rightarrow S^k$ are the projection maps. Just like in the case of $\BA^2$, the universal classes generate the $K$--theory groups of moduli spaces of stable sheaves, as the following result shows. \\

\begin{lemma}
\label{lem:generate}

Any element of $K(\CM^s_d)$ is of the form \eqref{eqn:universal}, for some $k \gg d$. \\

\end{lemma}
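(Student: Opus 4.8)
The plan is to show that the image of $\GS$ exhausts $K(\CM^s_d)$ by a resolution-of-the-diagonal argument. First I would record that universal classes are closed under multiplication: if $f \in K_{\fS(k)}(S^k)$ and $g \in K_{\fS(l)}(S^l)$, then the projection formula together with Frobenius reciprocity gives $f[\CU]\cdot g[\CU] = h[\CU]$, where $h = \mathrm{Ind}_{\fS(k)\times\fS(l)}^{\fS(k+l)}(f \boxtimes g) \in K_{\fS(k+l)}(S^{k+l})$ (here one uses that $\CU_1 \otimes \dots \otimes \CU_{k+l}$ is fully symmetric). Hence $\CT := \bigcup_{k} \GS\big(K_{\fS(k)}(S^k)\big) \subseteq K(\CM^s_d)$ is a subalgebra, and any element of $\CT$ that is a polynomial in universal classes of total level $\le k$ is itself a single class of the form \eqref{eqn:universal} at level $k$. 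Since $K(\CM^s_d)_{\BQ}$ is finite dimensional, it then suffices to prove $\CT = K(\CM^s_d)$; the clause ``$k \gg d$'' merely records that finitely many tautological generators of bounded level, hence a single large $k$, suffice.

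To prove $\CT = K(\CM^s_d)$ I would use the standard diagonal trick. If one can write $[\CO_{\Delta}] = \sum_i \alpha_i \boxtimes \beta_i$ in $K(\CM^s_d \times \CM^s_d)_{\BQ}$ with all $\alpha_i, \beta_i \in \CT$, then for any $\gamma \in K(\CM^s_d)$ the projection formula yields $\gamma = p_{2*}(p_1^*\gamma \otimes \CO_\Delta) = \sum_i \chi(\CM^s_d, \gamma \otimes \alpha_i)\,\beta_i \in \CT$. Thus the lemma reduces to expressing the diagonal through universal classes.

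The source of such an expression is the two-variable $\Ext$ complex. On $\CM^s_d \times \CM^s_d \times S$ let $\CU_1, \CU_2$ be the pullbacks of $\CU$ from the two moduli factors, and set $\CE = Rq_*\,\RHom(\CU_1, \CU_2)$, where $q$ forgets the $S$-factor. Its class $q_*(\CU_1^\vee \otimes \CU_2) \in K(\CM^s_d \times \CM^s_d)$ is manifestly a sum of external products of universal classes (Künneth components of $\mathrm{ch}\,\CU$), hence lies in the image of $\CT \otimes \CT$. The fibre of $\CE$ at $(\CF, \CF')$ is $\Ext^\bullet(\CF, \CF')$: since stable sheaves are simple, $H^0$ vanishes off the diagonal and is a line on it, while Serre duality identifies $\Ext^2(\CF,\CF')$ with $\Hom(\CF', \CF \otimes \CK_S)^\vee$, which Assumption S forces to vanish off the diagonal (either $\CK_S \cong \CO_S$, the symplectic case, or $c_1(\CK_S)\cdot H < 0$, so $\CF \otimes \CK_S$ has strictly smaller slope and admits no nonzero map from the stable $\CF'$). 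Consequently $\CE$ is controlled precisely along $\Delta$, whose normal data is tautological because deformation theory identifies $T\CM^s_d$ with the $\Ext^1$ part of $\CE|_\Delta$. A Koszul/Atiyah-class resolution of the diagonal built from $\CE$ — the analogue for moduli of sheaves of Beilinson's resolution of the diagonal on $\BP^n$ — then expresses $[\CO_\Delta]$ as a universal polynomial in the Künneth components of $\CE$, i.e. in external products of universal classes.

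The main obstacle is this last step: converting the fibrewise control of $\CE$ into an honest K-theoretic resolution of $[\CO_\Delta]$ inside the tautological subring. The delicate points are checking, via Assumption S and the smoothness of $\CM^s_d$, that the higher $\Ext$ sheaves are supported on a thickening of $\Delta$ with the expected ranks, and assembling the complex so that every term remains tautological; the simplicity of stable sheaves supplies the rank-one behaviour of $H^0(\CE)$ on $\Delta$ that seeds the construction.
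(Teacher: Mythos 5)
Your overall strategy is the same as the paper's: reduce the lemma to expressing the class of the diagonal $\Delta \subset \CM^s_d \times \CM^s_d$ in terms of the relative $\Ext$ complex $\CE = Rq_*\RHom(\CU_1,\CU_2)$, whose geometry you analyze correctly (simplicity of stable sheaves kills $\Hom$ off the diagonal, Serre duality plus Assumption S kills $\Ext^2$, and $[\CE]$ is tautological). The difference is that the paper does not attempt to prove this diagonal decomposition: it invokes Theorem 1 of Markman \cite{M}, which is precisely the statement that $[\Delta]$ is a polynomial in the K\"unneth factors of the Chern classes of $\CE$, notes that the argument extends to surfaces with negative canonical class as in \cite{GT}, and then transfers the resulting generation statement from Chow groups to $K$--theory via the Chern character isomorphism (available by smoothness, i.e.\ Assumption S) and Grothendieck--Hirzebruch--Riemann--Roch, finally symmetrizing $g$ to reach the form \eqref{eqn:universal}.

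The genuine gap is exactly the step you flag as ``the main obstacle'': producing the decomposition of $[\CO_\Delta]$ (or of $[\Delta]$ in Chow) inside the tautological subring. This is not a routine Koszul or Beilinson-type construction. The diagonal is not the zero locus of a section of a bundle naively built from $\CE$; it is a degeneracy locus where the ranks of $\Hom$ and $\Ext^2$ jump, and converting that jumping behavior into a closed formula for $[\Delta]$ is a delicate Porteous-type argument that constitutes the main content of \cite{M}. The phrase ``a universal polynomial in the K\"unneth components of $\CE$'' is therefore the theorem to be proven, not a step one can wave through. There is also a secondary issue: you want this decomposition directly in $K$--theory as a sum of external products $\sum_i \alpha_i \boxtimes \beta_i$, but $K$--theory has no K\"unneth decomposition (as the paper itself emphasizes elsewhere), which is why the paper's proof detours through rational Chow groups, where Markman's identity lives, and only then returns to $K$--theory by GHRR. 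So your proposal correctly assembles all the geometric ingredients and reduces the lemma to a known but substantial external theorem, without proving it; as a self-contained argument it is incomplete at its central point.
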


\begin{proof} Theorem 1 of \cite{M} shows that the class of the diagonal:
$$
\Delta_{\CM^s_d} \hookrightarrow \CM^s_d \times \CM^s_d
$$
can be expressed as a Chern class of the virtual bundle:
$$
\xymatrix{\CE \ar@{.>}[d] & \ \sum_{i=0}^2 (-1)^i\text{Ext}^i(\CF,\CF') \ar@{.>}[d] \ar@{_{(}->}[l] \\
\CM^s_d \times \CM^s_d & \ (\CF,\CF') \ar@{_{(}->}[l]}
$$
Although the result of \loccit is stated in cohomology, it holds at the level of Chow groups $A^*(\CM_d^s)$ with rational coefficients. Also, while this result is stated for a surface with trivial canonical class (the top option in Assumption S), an analogous argument works for a surface with negative canonical class (the bottom option in Assumption S), as shown in \cite{GT} for Hilbert schemes. Based on these facts, it is standard to show that any element of $A^*(\CM_d^s)$ can be written in the form:
$$
\pi_{*} \Big(\text{ch}(\CU_1) \cdot ... \cdot \text{ch}(\CU_k) \cdot \rho^*(g) \Big) \in A^*(\CM_d^s)
$$
for some $k \gg d$ and some $g \in A^*(S^k)$. Since the Chern character $K(\CM_d^s) \rightarrow A^*(\CM_d^s)$ is an isomorphism (over $\BQ$, as a consequence of Assumption S), then a simple application of the Grothendieck-Hirzebruch-Riemann-Roch theorem shows that any element of $K(\CM_d^s)$ can be written as:
$$
\pi_* \Big( \CU_1 \otimes ... \otimes \CU_k \otimes \rho^*(g) \Big)
$$
for some $g \in K(S^k)$. The formula above is equal to \eqref{eqn:universal} for $f[X] = \sum_{\sigma \in \fS(k)} \sigma^*(g)$. 
	
\end{proof}

\begin{remark} It would be very interesting to prove Lemma \ref{lem:generate} without Assumption S (although in this case, one should rather work with a dg scheme model of the moduli space of stable sheaves, instead of the singular scheme $\CM^s$). However, the argument provided above requires Assumption S in several crucial places. \\	
\end{remark}

\subsection{} We will need a geometric incarnation on the plethysm operation \eqref{eqn:pleth surf}: \\

\begin{proposition}
\label{prop:plethystic identity}

For any $f[X] \in K_{\fS(n)}(S^n)$, we have the following identity:
\begin{equation}
\label{eqn:plethystic identity}
f \left[\CU\pm \CO_{\Delta} z \right] = \pi_{*} \Big[ \left(\CU_1 \pm \CO_{\Delta_{1\bullet}} z \right) ... \left(\CU_k \pm \CO_{\Delta_{n\bullet}} z \right) \otimes f[X] \Big]^{\fS(n)}
\end{equation}
as elements of $K(\CM^s \times S)[z]$, where $\pi : \CM^s \times S^n \times S \rightarrow \CM^s \times S$ is the projection map that forgets the middle $n$ factors of $S$, and $\bullet$ indicates the surviving factor of $S$. \\

\end{proposition}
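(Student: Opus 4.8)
The plan is to reduce the identity to the case of a single power sum, and then to prove that case by linearizing the universal-class construction via the trace of a cyclic permutation. Since every $f[X] \in K_{\fS(n)}(S^n)$ is a $\BQ$-linear combination of products $p_\lambda = p_{\lambda_1} \cdots p_{\lambda_t}$ of power sums (Subsection \ref{sub:frobenius}), and both sides of \eqref{eqn:plethystic identity} are linear in $f[X]$, it suffices to treat $f = p_\lambda$. The left-hand side is then manifestly multiplicative: the plethysm of Definition \ref{def:plethysm surface} is a ring homomorphism and the universal-class map of \eqref{eqn:universal} carries products to products, so $p_\lambda[\CU \pm \CO_\Delta z] = \prod_a p_{\lambda_a}[\CU \pm \CO_\Delta z]$ in $K(\CM^s \times S)[z]$, the product being taken along the common factor $\bullet$.

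First I would establish the matching factorization of the right-hand side. When $f = p_\lambda$, the class $\rho^*(p_\lambda)$ is supported on the disjoint union of the small diagonals cut out by the blocks of variables corresponding to the parts of $\lambda$, carrying the power-sum $\fS(\lambda_a)$-structure on each block. Frobenius reciprocity turns the induction product into a tensor product over the blocks, so that the $\fS(n)$-invariants in \eqref{eqn:plethystic identity} factor as a product over $a = 1, \dots, t$ of one-block expressions, glued along the shared factor $\bullet$. The geometric input needed here is that the partial diagonals $\Delta_{I_a \bullet}$ attached to distinct blocks meet transversally — their defining equations $\{x_i - y\}$ form a regular sequence, as a local coordinate check shows — so that no Tor-corrections arise between different blocks and the product along $\bullet$ is the honest product in $K(\CM^s \times S)$. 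This reduces the problem to the single-block statement
\[
p_m[\CU \pm \CO_\Delta z] = \pi_* \Big[ (\CU_1 \pm \CO_{\Delta_{1\bullet}} z) \cdots (\CU_m \pm \CO_{\Delta_{m\bullet}} z) \otimes p_m \Big]^{\fS(m)},
\]
where $p_m \in K_{\fS(m)}(S^m)$ is supported on the small diagonal.

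For the single-block case, the key is that restricting to the small diagonal makes all the slots coincide, so the bracket becomes $W^{\otimes m}$ with $W = \CU \pm \CO_\Delta z$ and $\fS(m)$ permuting the tensor factors; taking $\Hom_{\fS(m)}(p_m, -)$ then computes the trace of a cyclic permutation, which equals the \emph{additive} power sum $p_m[W] = p_m[\CU] \pm z^m\, p_m[\CO_\Delta]$, using $\Hom_{\fS(m)}(p_m,V)=\Tr_V(\omega_m)$ from the proof of Proposition \ref{prop:classic surface}. This additivity is the geometric counterpart of the vanishing \eqref{eqn:restriction zero} of $p_m$ on $\fS(i) \times \fS(m-i)$ for $0 < i < m$, which kills all mixed contributions and, in particular, explains why only the constant and top-degree terms in $z$ survive. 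It then remains to compare with the left-hand side, where Definition \ref{def:plethysm surface} together with the Heisenberg relation \eqref{eqn:heis surface} gives $p_m[\CU \pm \CO_\Delta z] = p_m[\CU] \pm \tfrac{z^m}{m}\, p_m^\dagger(p_m) = p_m[\CU] \pm z^m \tfrac{(1-q_1^m)(1-q_2^m)}{(1-q_1)(1-q_2)}$.

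The hard part will be the remaining identification of $p_m[\CO_\Delta]$, after push-forward, with $\tfrac{(1-q_1^m)(1-q_2^m)}{(1-q_1)(1-q_2)}$: this Adams-type operation on the diagonal class amounts to the derived self-intersection of the small diagonal, which produces $\wedge^\bullet$ of its conormal bundle, on which the cyclic permutation acts by the power-sum character. This is precisely the Claim in the proof of Proposition \ref{prop:classic surface}, namely $\Tr_{\wedge^\bullet(N^*_{S|S^m})}(\omega_m) = \tfrac{(1-q_1^m)(1-q_2^m)}{(1-q_1)(1-q_2)}$. Carrying out this self-intersection computation with all $\fS(m)$-equivariant and sign bookkeeping, and checking that its normalization matches the factor $\tfrac{z^m}{m}\,p_m^\dagger(p_m)$ coming from \eqref{eqn:heis surface}, is the technical heart; the reductions above are arranged so that this one diagonal computation settles the whole proposition.
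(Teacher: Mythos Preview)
Your reduction in the first paragraph does not go through for a general surface $S$. The Frobenius isomorphism of Subsection \ref{sub:frobenius} identifies $K_{\fS(n)}(\circ)$ with degree $n$ symmetric polynomials, and this is what makes $\LA$ a polynomial ring in the $p_k$'s. But $K_{\fS(n)}(S^n)$ for a projective surface is much larger: already for $n=1$ it is $K(S)$, which is not one-dimensional, and in general it contains classes pulled back from $K(S)^{\otimes n}$ that have nothing to do with diagonals. The ``power sums'' $p_k$ in the surface setting are \emph{operators} $\LS \to \LSS$ (see \eqref{eqn:ind surf}), not elements spanning $\LS$, and there is no analogue of the statement ``every $f[X]$ is a linear combination of $p_\lambda$'s'' over which both sides of \eqref{eqn:plethystic identity} would be linear. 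So the very first step of your argument fails, and the subsequent single-block computation, while correct in spirit, only treats a measure-zero slice of the $f$'s one needs.

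The paper's proof avoids this entirely: it keeps $f$ arbitrary and instead expands the product $\prod_i(\CU_i \pm \CO_{\Delta_{i\bullet}}z)$ on the right-hand side as a sum over subsets of $\{1,\dots,n\}$. After Frobenius reciprocity, the coefficient of $z^k$ is recognized as $\GS(h_k^\dagger(f[X]))$ via \eqref{eqn:triv surf}, and then the generating-function identity \eqref{eqn:complete surface adj} packages $\sum_k z^k h_k^\dagger$ as $\exp[\sum p_k^\dagger z^k/k]|_\Delta$, which is exactly the plethysm of Definition \ref{def:plethysm surface}. The point is that $h_k^\dagger$ is defined on \emph{all} of $K_{\fS(n)}(S^n)$, so no spanning hypothesis is needed.
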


\begin{proof} Let us prove \eqref{eqn:plethystic identity} in the case $\pm = +$, as the case $\pm = -$ is analogous, and so we leave it to the interested reader. The right-hand side of \eqref{eqn:plethystic identity} is:
$$
\sum_{k=0}^n z^k \cdot \left( \sum_{\{a_1,...,a_k\} \subset \{1,...,n\}} \pi_* \left[ \bigotimes_{s \in \{1,...,n\} \backslash \{a_1,...,a_n\}} \CU_s \otimes \CO_{\Delta_{a_1...a_k\bullet}} \otimes f[X] \right] \right)^{\fS(n)} = 
$$
$$
= \sum_{k=0}^n z^k \cdot \left( \pi^{(k)}_*\left[\CU_1 \otimes ... \otimes \CU_{n-k} \otimes f[X] \Big|_{S^{n-k} \times \Delta_{n-k+1...n\bullet}} \right] \right)^{\fS(n-k) \times \fS(k)}
$$
where $\pi^{(k)} : \CM^s \times S^{n-k} \times S \rightarrow \CM^s \times S$ is the projection map that forgets the middle $n-k$ factors of $S$ (the two rows above are equal because of Frobenius reciprocity). By \eqref{eqn:triv surf}, the bottom row of the expression above is equal to:
$$
\sum_{k=0}^n z^k \cdot \pi^{(k)}_*\left[\CU_1 \otimes ... \otimes \CU_{n-k} \otimes h_k^\dagger(f[X]) \right]^{\fS(n-k)} \stackrel{\eqref{eqn:universal}}=
$$
$$
= \sum_{k=0}^n z^k \cdot \GS \left(h_k^\dagger(f[X]) \right) \stackrel{\eqref{eqn:complete surface}}= \GS \left( \exp \left[\sum_{k=1}^\infty \frac {p_k^\dagger z^k}k \right] \Big|_\Delta \cdot f[X] \right) = \GS \left( f \left[X + \CO_{\Delta} z \right]  \right)
$$  
which is equal to the left-hand side of \eqref{eqn:plethystic identity}. 

\end{proof}

\noindent The following is Proposition 5.12 of \cite{W surf} (see also Theorem 3.16 of \cite{W} for the $S = \BA^2$ case). In \loccitt, the objects denoted by $f[...]$ in formulas \eqref{eqn:action universal surface 1} and \eqref{eqn:action universal surface 2} were actually understood to be the right-hand side of \eqref{eqn:plethystic identity}. The fact they match
our current notion of plethysm \eqref{eqn:pleth surf} is the content of Proposition \ref{prop:plethystic identity}. \\

\begin{proposition} 
\label{prop:action universal surface}
	
In terms of universal classes, \eqref{eqn:action right surface 4}--\eqref{eqn:action right surface 5} read:
$$
\Phi(H_{n,m})(f[\CU]) = \int_{\CU \prec |z_n| \prec ... \prec |z_1| \prec 0, \infty} \frac {\prod_{i=1}^n z_i^{\left \lfloor \frac {mi}n \right \rfloor - \left \lfloor \frac {m(i-1)}n \right \rfloor}}{\prod_{i=1}^{n-1} \left(1 - \frac {z_{i+1}q}{z_i} \right) \prod_{i<j} \zeta \left( \frac {z_j}{z_i} \right)}  
$$
\begin{equation}
\label{eqn:action universal surface 1}
\wedge^\bullet \left(- \frac {\CU}{z_1} \right) ... \wedge^\bullet \left(- \frac {\CU}{z_n} \right) \otimes f \left[\CU - \CO_\Delta \sum_{i=1}^n z_i \right] \prod_{a=1}^n \frac {dz_a}{2\pi i z_a}
\end{equation}
and:
$$
\Phi(H_{-n,m})(f[\CU]) = \int_{\CU \prec |z_1| \prec ... \prec |z_n| \prec 0, \infty} \frac {(-q)^{n}\prod_{i=1}^n z_i^{\left \lfloor \frac {mi}n \right \rfloor - \left \lfloor \frac {m(i-1)}n \right \rfloor}}{\prod_{i=1}^{n-1} \left(1 - \frac {z_{i+1}q}{z_i} \right) \prod_{i<j} \zeta \left( \frac {z_j}{z_i} \right)}  
$$
\begin{equation}
\label{eqn:action universal surface 2}
\wedge^\bullet \left(\frac {\CU}{z_1q} \right) ... \wedge^\bullet \left(\frac {\CU}{z_nq} \right) \otimes f \left[\CU + \CO_\Delta \sum_{i=1}^n z_i \right] \prod_{a=1}^n \frac {dz_a}{2\pi i z_a}
\end{equation}
as elements of $K(\CM^s \times S)$, where $q_1 + q_2 = [\Omega_S^1]$ and $q=q_1q_2 = [\CK_S]$. \\
	
\end{proposition}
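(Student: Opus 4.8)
The plan is to deduce the integral formulas from the already-established geometric definition \eqref{eqn:action right surface 4}--\eqref{eqn:action right surface 5}, treating the single-step case $n=1$ first and then bootstrapping to general $n$; throughout, the identification of the geometric (skyscraper) plethysm with the algebraic plethysm \eqref{eqn:pleth surf} will be supplied by Proposition \ref{prop:plethystic identity}.

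First, for $n=1$, I would analyze the Hecke correspondence $\fZ_1=\{\CF\supset_x\CF'\}$ directly. Over $\CM^s\times S$ (via $p_-\times p_S$) the space $\fZ_1$ is a projective bundle whose fiber parametrizes the length-one quotient, and whose tautological line bundle is $\CL=\CF_x/\CF'_x$. In $K$-theory the universal sheaves attached to $\CF$ and $\CF'$ differ by a skyscraper, $[\CU_{\CF'}]=[\CU_\CF]-\CL\cdot\CO_\Delta$, so pulling back $f[\CU]$ along $p_+$ (which records $\CF'$) yields $f[\CU-\CO_\Delta z_1]$ once the Chern root of $\CL$ is renamed $z_1$; this rewriting is exactly the content of Proposition \ref{prop:plethystic identity}. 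Computing $(p_-\times p_S)_*$ by the $K$-theoretic projective-bundle (residue) formula converts the pushforward into a contour integral in $z_1$ whose kernel is the Euler-class factor $\wedge^\bullet(-\CU/z_1)$, while the twist $\CL^{d_1}$ supplies the monomial $z_1^{d_1}$. This gives \eqref{eqn:action universal surface 1} for $n=1$; reading $\fZ_1$ in the opposite direction and dualizing produces the $H_{-1,m}$ formula, with the overall $(-q)$ and the opposite shift $+\CO_\Delta z_1$ arising from the dualization.

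Next, for general $n$, I would exploit that \eqref{eqn:action right surface 4} is an $n$-fold iteration built on the double correspondence $\fZ_2^\bullet=\{\CF\supset_x\CF'\supset_x\CF''\}$, so that $\Phi(H_{n,m})$ is a pushforward from the flag space of length-$n$ chains of modifications, all supported at a single point $x\in S$. Each step contributes one variable $z_i$, identified with the Chern root of the $i$-th successive quotient $\CL_i$, and the nested plethysm collapses to $f[\CU-\CO_\Delta\sum_i z_i]$ exactly as in the $n=1$ step, with the exponents $d_i=\lfloor mi/n\rfloor-\lfloor m(i-1)/n\rfloor$ recorded by the twists $\CL_i^{d_i}$. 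The rational prefactor then encodes the $K$-theoretic Euler classes of the fibers of this flag space: the pairwise interaction between distinct modifications produces $\prod_{i<j}\zeta(z_j/z_i)^{-1}$, while the constraint that consecutive modifications occur at the same point $x$ produces the factors $\prod_{i=1}^{n-1}(1-z_{i+1}q/z_i)^{-1}$, and the ordering of contours $|z_n|\prec\cdots\prec|z_1|$ records the order in which the modifications are performed.

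I expect the cleanest route, given the paper's structure, to be the observation that the integral presentation \emph{with the geometric skyscraper plethysm} is precisely Proposition 5.12 of \cite{W surf}, after which Proposition \ref{prop:plethystic identity} rewrites that plethysm in the algebraic form \eqref{eqn:pleth surf}. In the fully self-contained route, the main obstacle is the general-$n$ prefactor: extracting the exact shape of $\prod_{i=1}^{n-1}(1-z_{i+1}q/z_i)^{-1}$ and $\prod_{i<j}\zeta(z_j/z_i)^{-1}$ from the iterated geometry of $\fZ_2^\bullet$ requires careful bookkeeping of the virtual tangent spaces along the nested correspondence and of which poles each successive contour encloses, and it is here that matching the signs, the powers of $q$, and the floor-function exponents $d_i$ is most delicate.
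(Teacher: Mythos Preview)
Your fourth paragraph is exactly what the paper does: the proposition is not proved here but is cited as Proposition~5.12 of \cite{W surf} (with the $S=\BA^2$ case being Theorem~3.16 of \cite{W}), and the only work done in the present paper is to invoke Proposition~\ref{prop:plethystic identity} to identify the geometric skyscraper plethysm used in \cite{W surf} with the algebraic plethysm \eqref{eqn:pleth surf}. Your earlier paragraphs sketch a plausible self-contained derivation of the cited result via the projective-bundle residue calculus on $\fZ_1$ and iteration through $\fZ_2^\bullet$, which is indeed the shape of the argument in \cite{W surf}, but the present paper does not reproduce any of that.
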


\begin{proof}\emph{of Theorem \ref{thm:surface}:} Comparing formulas \eqref{eqn:action left surf}--\eqref{eqn:action right surf} with \eqref{eqn:action universal surface 1}--\eqref{eqn:action universal surface 2}, we observe that they are one and the same integral but over slightly different contours. The difference is in which side of the $z_1,...,z_n$ contours the pole at $0$ lies. As we explained in the proof of Theorem \ref{thm:main}, when $m > \pm nr$, the integrand is actually regular at 0, so formulas \eqref{eqn:action left surf}--\eqref{eqn:action right surf} produce the same result as \eqref{eqn:action universal surface 1}--\eqref{eqn:action universal surface 2}. \\
	
\end{proof} 

\subsection{} Consider a weak action $\CA \stackrel{\Phi}\curvearrowright_S K(X)$. Given any element $v \in K(X)$, the \underline{submodule} generated by $v$ will refer to the subset $\CA \cdot v \subset K(X)$ consisting of linear combinations of the following operators applied to $v$:
\begin{multline*} 
K(X) \xrightarrow{\Phi(a_k)} K(X \times S) \xrightarrow{\Phi(a_{k-1}) \boxtimes \text{Id}_S} ... \\ ... \xrightarrow{\Phi(a_1) \boxtimes \text{Id}_{S^{k-1}}} K(X \times S^k) \xrightarrow{\text{Id}_X \boxtimes \gamma} K(X \times S^k) \xrightarrow{\pi_*} K(X)
\end{multline*}
where $k \in \BN$, $a_1,...,a_{k-1},a_k \in \CA$, $\gamma \in K(S^k)$ are arbitrary, and $\pi : X \times S^k \rightarrow X$ denotes the projection. For the action of Theorem \ref{thm:action surface}, we consider the submodules:
\begin{equation}
\label{eqn:submodules}
\CA \cdot \b1_d \subset K(\CM^s)
\end{equation}
where $\b1_d \in K(\CM^s_d)$ denotes the structure sheaf of the subvariety $\CM_d^s \subset \CM^s$. Since the algebra $\CA$ contains the operators \eqref{eqn:action right surface 2} of tensor product with the universal sheaf, then Lemma \ref{lem:generate} implies that:
$$
K(\CM^s_d) \subset \CA \cdot \b1_d
$$
for all $d \geq \left \lceil \frac {r-1}{2r} c_1^2 \right \rceil$. Therefore, we conclude the following: \\

\begin{proposition}
\label{prop:generators}
	
The $\Ar$--module $K(\CM^s)$ is generated by $\{\b1_d\}_{d \geq \left \lceil \frac {r-1}{2r} c_1^2 \right \rceil}$, i.e.
\begin{equation}
\label{eqn:union}
K(\CM^s) = \bigcup_{d = \left \lceil \frac {r-1}{2r} c_1^2 \right \rceil}^\infty \CA \cdot \b1_d
\end{equation}
Moreover, these submodules are contained inside each other, i.e.:
\begin{equation}
\label{eqn:nested}
\CA \cdot \b1_d \supset \CA \cdot \b1_{d-1}
\end{equation}
for all $d$. \\
	
\end{proposition}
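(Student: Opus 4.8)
The plan is to establish the nesting \eqref{eqn:nested} first, and then to deduce the union \eqref{eqn:union} formally from it together with the inclusion $K(\CM^s_d) \subseteq \CA \cdot \b1_d$ recorded just above the statement. Since $\CA \cdot \b1_d$ is by construction an $\CA$--submodule (a fortiori an $\Ar$--submodule) of $K(\CM^s)$, the inclusion $\CA \cdot \b1_{d-1} \subseteq \CA \cdot \b1_d$ is equivalent to the single membership $\b1_{d-1} \in \CA \cdot \b1_d$. Thus the crux is to produce $\b1_{d-1}$ by applying an element of $\Ar$ to $\b1_d$.

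To do this I would use the simplest Hecke lowering operator $H_{1,0}$, which lies in $\Ar$ because $0 > -r$. Recall from Theorem \ref{thm:action surface} that for $n=1$ and $m=0$ (so $d_1 = 0$ and $\CL^{d_1} = \CO$) the operator $\Phi(H_{1,0})$ is simply $(p_- \times p_S)_* \circ p_+^*$. Since $\CM^s_d$ is a connected component of $\CM^s$, we have $p_+^* \b1_d = \CO_{\fZ_1^{(d)}}$, where $\fZ_1^{(d)} = p_+^{-1}(\CM^s_d)$ is the locus with $c_2(\CF') = d$, so that $c_2(\CF) = d-1$. The map $(p_- \times p_S) : \fZ_1^{(d)} \to \CM^s_{d-1} \times S$ is the projectivization of the universal sheaf (its fiber over $(\CF,x)$ is the space of length $1$ quotients of $\CF$ supported at $x$), whence its derived pushforward of the structure sheaf is the structure sheaf of the base; hence $\Phi(H_{1,0})(\b1_d) = \b1_{d-1} \boxtimes \CO_S \in K(\CM^s_{d-1} \times S)$. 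Contracting the $S$--factor against the point class, i.e. forming $\pi_*\big( (\b1_{d-1} \boxtimes \CO_S) \otimes \pi_S^*[\CO_\pt] \big)$, then yields exactly $\b1_{d-1}$. As this is the application of an $\Ar$--operator to $\b1_d$ in the sense of the submodule definition (with $k=1$, $a_1 = H_{1,0}$, $\gamma = [\CO_\pt]$), we conclude $\b1_{d-1} \in \Ar \cdot \b1_d \subseteq \CA \cdot \b1_d$, which is \eqref{eqn:nested}.

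With the nesting in hand, the union \eqref{eqn:union} follows at once: the submodules $\CA \cdot \b1_d$ form an increasing, directed family, and for every $d' \leq d$ one has $K(\CM^s_{d'}) \subseteq \CA \cdot \b1_{d'} \subseteq \CA \cdot \b1_d$. Hence every graded piece $K(\CM^s_{d'})$ is contained in $\CA \cdot \b1_d$ for all $d \geq d'$, so the directed union $\bigcup_d \CA \cdot \b1_d$ contains each $K(\CM^s_{d'})$ and therefore generates all of $K(\CM^s)$; the reverse inclusion is trivial since each $\CA \cdot \b1_d \subseteq K(\CM^s)$.

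The main obstacle will be the pushforward identity $(p_- \times p_S)_* \CO_{\fZ_1^{(d)}} = \CO_{\CM^s_{d-1} \times S}$. Over the open locus where the universal sheaf is locally free this is the standard projective bundle identity $R\pi_* \CO_{\BP^{r-1}} = \CO$, but along the locus where $\CF$ fails to be locally free the projectivization acquires jumping fibers and one must verify that no higher cohomology corrections survive. I would handle this by invoking the smoothness of $\fZ_1$ from Definition \ref{def:corr surface} to identify $\fZ_1^{(d)}$ as the relative $\proj$ of the universal sheaf, for which the pushforward of $\CO$ remains $\CO$; alternatively, should a cleaner argument be preferred, one may replace $H_{1,0}$ by a twist $H_{1,m}$ in the allowed range $m > -r$ and argue that the resulting class has leading term $\b1_{d-1}$ together with corrections supported on lower strata, which already suffices for the membership $\b1_{d-1} \in \CA \cdot \b1_d$.
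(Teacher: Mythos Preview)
Your proof is correct and follows essentially the same route as the paper: both establish the nesting \eqref{eqn:nested} by showing $\Phi(H_{1,0})^{(\circ)}\cdot \b1_d = \b1_{d-1}$ (the paper states the more general $\Phi(H_{n,0})^{(\circ)}\cdot \b1_d = \b1_{d-n}$), and both deduce \eqref{eqn:union} from this together with the inclusion $K(\CM^s_d)\subset \CA\cdot \b1_d$ already recorded before the Proposition.

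The one methodological difference worth noting is in how the key identity $\Phi(H_{1,0})(\b1_d)=\b1_{d-1}\boxtimes\CO_S$ is justified. You argue geometrically, identifying $(p_-\times p_S):\fZ_1^{(d)}\to\CM^s_{d-1}\times S$ with the projectivization of the universal sheaf and invoking $R\pi_*\CO_{\BP(\CU)}=\CO$; as you correctly flag, this requires care at the locus where $\CU$ fails to be locally free. The paper instead reads the identity off directly from the contour-integral formula \eqref{eqn:action universal surface 1} specialized to $f=1$ (and cites \cite[Proposition 4.15]{K-theory}), which sidesteps that subtlety entirely. Your fallback suggestion of using some $H_{1,m}$ with ``leading term $\b1_{d-1}$ plus corrections on lower strata'' is too vague to be a proof as written; if you want to avoid the contour formula, the cleanest fix is to carry out the $\text{Proj}$ pushforward honestly (it does hold for torsion-free sheaves on a smooth surface, essentially because $\fZ_1$ is smooth and the map has rationally connected fibers).
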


\noindent The final claim in Proposition \ref{prop:generators} follows from:
\begin{equation}
\label{eqn:identity}
\Phi(H_{n,m})^{(\circ)} \cdot \b1_d = \begin{cases} \b1_{d-n} &\text{if } m = 0 \\ 0 &\text{if } m \in \{-1,...,-nr+1\} \end{cases} 
\end{equation}
for the skyscraper sheaf $\circ \in K(S)$ at any point on $S$. Formula \eqref{eqn:identity} is an immediate consequence of relation \eqref{eqn:action universal surface 1} for $f = 1$ (see also Proposition 4.15 of \cite{K-theory}). \\


\begin{thebibliography}{XXX}



\bibitem{BG} Bergeron F., Garsia A., {\em Science Fiction and Macdonald Polynomials}, Algebraic methods and $q$--special functions. R. Floreanini, L. Vinet (eds.), CRM Proceedings, Lecture Notes, AMS

\bibitem{BKR} Bridgeland T., King A., Reid M., {\em The McKay correspondence as an equivalence of derived categories}, J. Amer. Math. Soc. 14 (2001), 535--554

\bibitem{BS} Burban I., Schiffmann O., {\em On the Hall algebra of an elliptic curve, I}, Duke Math. J. 161 (2012), no. 7, 1171--1231

\bibitem{CNO} Carlsson E., Nekrasov N., Okounkov A., {\em Five dimensional gauge theories and vertex operators}, Mosc. Math. J., 2014, Volume 14, Number 1, 39--61

\bibitem{CO} Carlsson E., Okounkov A., {\em Exts and vertex operators}, Duke Math. J. Volume 161, Number 9 (2012), 1797--1815


\bibitem{ES} Ellingsrud G., Stromme, S.A. {\em An intersection number for the punctual Hilbert scheme of a surface}, Trans. Amer. Math. Soc. 350 (1998)

\bibitem{FHHSY} Feigin B., Hashizume K., Hoshino A., Shiraishi J., Yanagida S., {\em A commutative algebra on degenerate $\BC \BP^1$ and MacDonald polynomials},  J. Math. Phys. 50 (2009), no. 9



\bibitem{FT} Feigin B., Tsymbaliuk A., {\em Heisenberg action in the equivariant $K-$theory of Hilbert schemes via Shuffle Algebra}, Kyoto J. Math. 51 (2011), no. 4

\bibitem{GHT} Garsia A., Haiman M., Tesler G. {\em Explicit Plethystic Formulas for Macdonald $q,t$--Kostka Coefficients}, In: Foata D., Han GN. (eds) The Andrews Festschrift. Springer, Berlin, Heidelberg

\bibitem{Ges} Geissinger L., {\em Hopf algebras of symmetric functions and class functions}, in Combinatoire et repr\'esentation du groupe sym\'etrique (Actes Table Ronde C.N.R.S., Univ. Louis-Pasteur Strasbourg, trasbourg, 1976), pages 168–181. Lecture Notes in Math., Vol. 579. Springer, Berlin, 1977

\bibitem{GT} Gholampour A., Thomas R. P., {\em Degeneracy loci, virtual cycles and nested Hilbert schemes, I}, Tunisian Journal of Mathematics, Vol. 2 (2020), No. 3, 633–665

\bibitem{G} Grojnowski I., {\em Instantons and affine algebras I. The Hilbert scheme and vertex operators}, Math. Res. Lett. 3 (1996), no. 2

\bibitem{H} Haiman M., {\em  Hilbert schemes, polygraphs and the Macdonald positivity conjecture}, Journal of the American Mathematical Society, 14(04):941--1007

\bibitem{K} Krug A., {\em Symmetric quotient stacks and Heisenberg actions}, Math. Z. 288, 11--22 (2018)



\bibitem{M} Markman E., {\em Generators of the cohomology ring of moduli spaces of sheaves on symplectic surfaces}, J. Reine Angew. Math. 544 (2002) 61--82

\bibitem{Nak} Nakajima H., {\em Heisenberg algebra and Hilbert schemes of points on projective surfaces}, Ann. Math. 145, No. 2 (Mar 1997), 379--388

\bibitem{Nek} Nekrasov N., {\em Seiberg-Witten prepotential from instanton counting}, Adv. Theor. Math. Phys. 7 (2003), no. 5, 831--864

\bibitem{Shuf} Negu\cb t A., {\em The shuffle algebra revisited}, Int. Math. Res. Not., Issue 22 (2014), 6242--6275

\bibitem{K-theory}  Negu\cb t A., {\em Moduli of flags of sheaves and their $K$--theory}, Alg. Geom. 2 (1) (2015) 19--43 

\bibitem{Ops}  Negu\cb t A., {\em Operators on symmetric polynomials}, ar$\chi$iv:1310.3515

\bibitem{W} Negu\cb t A., {\em The $q-$AGT–W Relations Via Shuffle Algebras}, Commun. Math. Phys. 358, 101--170 (2018)

\bibitem{Shuf surf} Negu\cb t A., {\em Shuffle algebras associated to surfaces}, Selecta Mathematica (2019) 25:36

\bibitem{W surf} Negu\cb t A., {\em W-algebras associated to surfaces}, ar$\chi$iv:1710.03217

\bibitem{AGT} Negu\cb t A., {\em AGT relations for sheaves on surfaces}, ar$\chi$iv:1711.00390

\bibitem{Hecke} Negu\cb t A., {\em Hecke correspondences for smooth moduli spaces of sheaves}, ar$\chi$iv:1804.03645



\bibitem{S} Schiffmann O., {\em Drinfeld realization of the elliptic Hall algebra}, J. Algebraic Combin. 35 (2012), no. 2, 237--262

\bibitem{SV 1} Schiffmann O., Vasserot E., {\em The elliptic Hall algebra and the equivariant $K-$theory of the Hilbert scheme of ${\mathbb{A}}^2$}, Duke Math. J. 162 (2013), no. 2, 279--366


\end{thebibliography}
\end{document}